\pgfplotsset{compat=1.15}
\tikzset{every picture/.style={thick,>=angle 60}}
\tikzset{MDPrand/.style={draw,circle,minimum size=11*1.5,inner sep=0}}
\tikzset{MDPcont/.style={draw,rectangle,minimum size=9*1.5,inner sep=0}}
\tikzset{MDPbad/.style={fill=red}}
\definecolor{myorange}{RGB}{255, 128, 0}
\newcommand{\+}[1]{\mathbb{#1}}
\newcommand{\N}{\+{N}}
\newcommand{\R}{\+{R}}
\newcommand{\x}{\times}
\newcommand{\rsymbol}{R}
\newcommand{\zsymbol}{C}
\newcommand{\zstates}{\states_\zsymbol}
\newcommand{\rstates}{\states_\rsymbol}
\newcommand{\abs}[1]{\lvert#1\rvert}
\newcommand{\card}[1]{\abs{#1}}
\newcommand{\eqby}[2][=]{\stackrel{\mathrm{#2}}{#1}}
\newcommand{\eqdef}{\eqby{def}}
\newcommand{\defeq}{\eqdef}
\newcommand{\eps}{\varepsilon}
\newcommand{\step}[2][]{\xrightarrow[#1]{#2}}
\newcommand{\problemx}[3]{
\par\noindent\underline{\sc#1}\par\nobreak\vskip.2\baselineskip
\begingroup\clubpenalty10000\widowpenalty10000
\setbox0\hbox{\bf INPUT:\ }\setbox1\hbox{\bf QUESTION:\ }
\dimen0=\wd0\ifnum\wd1>\dimen0\dimen0=\wd1\fi
\vskip-\parskip\noindent
\hbox to\dimen0{\box0\hfil}\hangindent\dimen0\hangafter1\ignorespaces#2\par
\vskip-\parskip\noindent
\hbox to\dimen0{\box1\hfil}\hangindent\dimen0\hangafter1\ignorespaces#3\par
\endgroup}
\newcommand{\Runs}[2][]{{\textit{Runs}^{{#1}}_{{#2}}}}
\newcommand{\pRuns}[2][]{{\textit{H}^{{#1}}_{{#2}}}}
\newcommand{\dist}{\mathcal{D}}
\newcommand{\cobuchi}[1]{\textsf{co-B\"uchi}(#1)}
\newcommand{\always}{{\sf G}}
\newcommand{\eventually}{{\sf F}}
\newcommand{\hide}[1]{}
\newcommand{\lrc}[1]{(#1)}
\newcommand{\ignore}[1]{}
\newcommand{\tuple}[1]{\lrc{#1}}
\newcommand{\mdp}{{\mathcal M}}
\newcommand{\mdptupler}{\tuple{\states,\zstates,\rstates,\transition,\probp,r}}
\newcommand{\mdptuple}{\mdptupler}
\newcommand{\states}{S}
\renewcommand{\state}{s}
\newcommand{\transition}{{\longrightarrow}}
\newcommand{\probp}{P}
\newcommand{\complementof}[1]{\overline{#1}}
\newcommand{\play}{\rho}
\newcommand{\playset}{{\mathfrak R}}
\newcommand{\zstrat}{\sigma}
\newcommand{\xstrat}{\tau}
\newcommand{\zstratset}{\Sigma}
\newcommand{\memory}{{\sf M}}
\newcommand{\memconfset}{{\sf M}}
\newcommand{\memconf}{{\sf m}}
\newcommand{\expectval}{{\mathcal E}}
\newcommand{\probm}{{\mathcal P}}
\newcommand{\formula}{{\varphi}}
\newcommand{\valueof}[2]{{\mathtt{val}_{#1}(#2)}}
\mathchardef\mhyphen="2D 
\newcommand{\bubble}[2]{{\sf Bubble}_{#1}(#2)}
\newcommand{\liminfppobj}{\liminf_{\it DP}(\ge 0)}
\newcommand{\limsupppobj}{\limsup_{\it DP}(\ge 0)}
\newcommand{\G}{\always}
\newcommand{\F}{\eventually}
\newcommand{\limsupppexp}{{\mathcal E}(\limsup_{\it DP})}
\newcommand{\liminfppexp}{{\mathcal E}(\liminf_{\it DP})}
\newcommand{\transience}{\mathtt{Transience}}
\newcommand{\reward}{\mathit{r}}
\newcommand{\pmdp}{\mdp_{*}}
\newcommand{\pstates}{\states_{*}}
\newcommand{\pzstates}{\states_{*\zsymbol}}
\newcommand{\prstates}{\states_{*\rsymbol}}
\newcommand{\ptransition}{\transition_{*}}
\newcommand{\pprobp}{\probp_{*}}
\newcommand{\successors}[1]{\mathsf{Succ}({#1})}
\theoremstyle{thmstyleone}%
\newtheorem{theorem}{Theorem}
\newtheorem{proposition}[theorem]{Proposition}%
\newtheorem{lemma}[theorem]{Lemma}
\newtheorem{claim}[theorem]{Claim}
\newtheorem{corollary}[theorem]{Corollary}
\newtheorem{remark}{Remark}%
\newtheorem{definition}{Definition}%
\begin{document}
\title{
Strategy Complexity of Limsup and Liminf Threshold Objectives in Countable MDPs,
with Applications to Optimal Expected Payoffs
}

\author[1]{\fnm{Richard} \sur{Mayr}}
\author[1]{\fnm{Eric} \sur{Munday}}
\affil*[1]{
\orgname{University of Edinburgh}, \orgaddress{\street{10 Crichton Street}, \city{Edinburgh}, \postcode{EH8 9AB}, \country{UK}}}

\abstract{
We study Markov decision processes (MDPs) with a countably infinite
number of states.
The $\limsup$ (resp.\ $\liminf$) threshold objective is to maximize
the probability that the $\limsup$ (resp.\ $\liminf$)
of the infinite sequence of directly seen rewards is non-negative.
We establish the complete picture of the strategy complexity
of these objectives, 
i.e., the upper and lower bounds on
the memory required by $\eps$-optimal (resp.\ optimal) strategies.

We then apply these results to solve two open problems from
\cite[p.43 and p.53]{Sudderth:2020} about the strategy complexity of optimal
strategies for the \emph{expected} $\limsup$ (resp.\ $\liminf$) payoff.
}

\keywords{Gambling theory, Markov decision processes, Strategy complexity,
  Markov strategy, lim sup, lim inf}

\pacs[MSC Classification]{90C40,91A60}

\pacs[JEL Classification]{C44,C81,C73}

\maketitle

\section{Introduction}
\paragraph{Background.}
We consider Markov decision processes (MDPs) with countably infinite
numbers of states and countable action sets.
All runs are of infinite length, i.e., no termination.
MDPs are a standard model for dynamic systems that
exhibit both stochastic and controlled behavior (see, e.g., 
textbooks \cite{DubbinsSavage:2014,Puterman:book,MaitraSudderth:DiscreteGambling,raghavan2012} and references therein).
Some fundamental results and proof techniques for countable MDPs were
established in the framework of Gambling Theory (e.g., 
\cite{DubbinsSavage:2014,MaitraSudderth:DiscreteGambling}). See also Ornstein's
seminal paper on stationary strategies \cite{Ornstein:AMS1969}.
Further applications include control theory (e.g.,
\cite{blondel2000survey,NIPS2004_2569,Ziliotto:2016}),
operations research
and finance (e.g., \cite{schal2002markov,nowak2005,bauerle2011finance,Solan:2014,Flesch:JOTA2020})
artificial intelligence and machine
learning (e.g., \cite{sigaud2013markov,sutton2018reinforcement})
and formal verification (e.g., \cite{ModCheckPrinciples08,EWY2010,BBEKW2010,BBEK:IC2013,
EY:JACM2015,ACMSS2016,ModCheckHB18,KMST2020c}).
The latter works often use countable MDPs to describe unbounded structures in
computational models such as stacks/recursion, counters, queues, etc.
Properties of the long run behavior of MDPs have been extensively studied
(e.g., \cite{Hill:79,sudderth1983gambling,MaitraSudderth:DiscreteGambling,Gimbert2011ComputingOS,Renault:2017,KMST:ICALP2019,Renault:2020,Flesch:JOTA2020,Sudderth:2020}).

A countable MDP can be described as a directed graph
with countably many vertices, where each vertex represents a state.
A directed edge $s \to s'$ from vertex $s$ to vertex $s'$ is also called
a \emph{transition} from $s$ to $s'$.
Many representations of MDPs make an explicit distinction between controlled choices
and random choices by partitioning the states into controlled states and random states.
In a controlled state $s$, the player can choose
a distribution over the set of successor states of $s$, i.e., a distribution
over $\successors{s} \eqdef \{s' \mid \state\transition{}\state'\}$.
In a random state $s$, the next state is chosen according to a predefined
probability distribution over $\successors{s}$.
In a countably infinite MDP, it is possible that, for some state $s$,
the set $\successors{s}$ is infinite, in which case the MDPs is said to be
\emph{infinitely branching}. On the other hand, if $\successors{s}$ is finite
for every state $s$ then the MDP is \emph{finitely branching}. 
In this model, a numeric reward can be assigned to each transition,
or alternatively to each state.

A slightly different representation of MDPs has often been used in gambling theory.
Here the random states are kept implicit. At every state, there is a set of
available actions, and the player chooses a distribution over these actions.
Every action yields a distribution over the states, from which the successor
state is sampled. A numeric reward is assigned to each state.
(Alternatively, each action can be associated with a combined distribution
over states and rewards.)

These two different representations of MDPs are trivially equivalent via
mutual encoding. However, it should be noted that the MDP being finitely
branching in the first model is a stronger condition than requiring
finite action sets in the second model. Even if all action sets are finite,
an action could still yield a distribution over states which has infinite
support. MDPs with finite action sets in the second model
correspond to those in the first model where all controlled states are
finitely branching (while random states can still be infinitely branching).

By fixing a strategy for the player and an initial state, one obtains a probability space
of runs of the MDP. The player's goal is to optimize the expected value of
some objective function on the runs.
The amount/type of memory and randomization that $\eps$-optimal (resp.\ optimal) strategies
need for a given objective is called its \emph{strategy complexity}.

\paragraph{\texorpdfstring{$\limsup$ and $\liminf$  objectives.}{Lim sup and
    Lim inf objectives}}
MDPs are given a reward structure by assigning a real-valued
(resp.\ integer or rational) reward to
each transition. (Alternatively, rewards can be assigned to states.
The two versions can easily be encoded into each other; cf.~\Cref{sec:connections}.)
Every run then induces an infinite sequence of
seen transition rewards $r_0r_1r_2\dots$
(also called \emph{daily payoffs} \cite{Maitra-Sudderth:2003}
or \emph{point payoffs} \cite{MM:CONCUR2021}).
General objectives are defined by real-valued bounded measurable functions on runs.
(In some cases, e.g., the threshold objectives below,
this function is just an indicator function of some measurable
event, i.e., one tries to maximize the probability of the event.)

For $\limsup$ objectives, the payoff of a run is defined
as the $\limsup$ of the daily payoffs, i.e., $\limsup_{n\ge 0} r_n$,
and not the sum or the average.
This $\limsup$ objective comes from two sources. In the gambling theory
of Dubins and Savage \cite{DubbinsSavage:2014},
it corresponds to a nonleavable gambling problem
(see also \cite{Dubins:1989,MaitraSudderth:DiscreteGambling}).
In game theory, it appears in Blackwell's papers on $G_\delta$ games
\cite{Blackwell:1969,Blackwell:1989}, and it has also been
considered in \cite{MaitraSudderth:DiscreteGambling,Maitra-Sudderth:2003}.

The $\limsup$ \emph{threshold objective} is to maximize the
probability that $\limsup_{n\ge 0} r_n \ge 0$.
\footnote{
One could also consider threshold objectives with strict
inequality, i.e., $\limsup_{n\ge 0} r_n > 0$.
For integer rewards this is equivalent to the non-strict objective
$\limsup_{n\ge 0} r_n \ge 1$.
For real rewards, the strategy complexity of $\eps$-optimal strategies
for the strict $\limsup_{n\ge 0} r_n > 0$ objective
is the same as for the non-strict case
$\limsup_{n\ge 0} r_n \ge 0$.
This is because
$(\limsup_{n\ge 0} r_n > 0) = \cup_{k \ge 1} (\limsup_{n\ge 0} r_n \ge 2^{-k})$.
By continuity of measures, for some $k$ depending on $\eps$,
the non-strict objective $(\limsup_{n\ge 0} r_n \ge 2^{-k})$
approximates the strict objective $(\limsup_{n\ge 0} r_n > 0)$ sufficiently closely.
The strategy complexity of optimal strategies for the strict case
is open.
}
Similarly, the $\liminf$ threshold objective is to maximize the
probability that $\liminf_{n\ge 0} r_n \ge 0$.
In the special case where the transition rewards are limited to the integers,
the $\limsup$ (resp.\ $\liminf$) threshold objective
corresponds to the objective of seeing rewards $\ge 0$ infinitely often
(resp.\ to see rewards $<0$ only finitely often).
These are also called B\"uchi (resp.\ co-B\"uchi)
objectives in \cite{KMST:ICALP2019,KMST2020c}, due to their
connections to automata theory and temporal logics \cite{CGP:book,ModCheckHB18}.
However, the general case of \emph{infinite-state} MDPs with rational/real
rewards is more complex.
E.g., the sequence of rewards $-1/2, -1/3, -1/4 \dots$ does satisfy
$\limsup \ge 0$ and $\liminf \ge 0$,
even though all rewards are negative.

A related problem is to maximize the \emph{expected} $\limsup$
(resp.\ $\liminf$) of the daily payoffs in the runs.
This corresponds to nonleavable gambling problems \cite[Section 4]{MaitraSudderth:DiscreteGambling}.
Unlike for the threshold objective, an optimal strategy to maximize the
\emph{expected} $\limsup$ (resp.\ $\liminf$) could accept 
a high probability of a negative $\limsup$ (resp.\ $\liminf$),
provided that the remaining runs have a huge positive $\limsup$ (resp.\ $\liminf$).
It was shown in  \cite{Sudderth:2020} that optimal strategies for the expected $\limsup$,
if they exist, can be chosen as deterministic Markov.
Threshold objectives and expected $\limsup$/$\liminf$ objectives are
closely related; see \Cref{sec:connections,sec:expected}.

Note that the expected $\limsup$ of the daily payoffs
is different from the $\limsup$ of the expected daily payoffs
(and likewise for the $\liminf$).
One could consider $\limsup_{n \ge 0} E(X_n)$ for
random variables $X_n$ that depend on histories of length $n$ and
define $X_n \eqdef r_n$ as the $n$-th daily payoff.
Consider the simple example of a Markov chain with just two runs, each of
probability $1/2$, where the first run has rewards $010101\dots$ ($1$ at odd
steps) and the second run has rewards $101010\dots$ ($1$ at even steps).
Both runs have $\limsup_{n\ge 0} r_n = 1$, and thus $E(\limsup_n r_n) = 1$.
However, $\limsup_{n\ge 0} E(X_n) = \limsup_{n\ge 0} 1/2 = 1/2$.
The $\limsup$/$\liminf$ of the expected daily payoffs is not a topic in this paper.

\paragraph{Strategy Complexity.}
Classes of strategies are defined via the amount and type of memory
used, and whether they are randomized (aka mixed) or deterministic (aka pure).
Some canonical types of memory for strategies are the following:
No memory (also called stationary, memoryless or positional),
finite memory, a step counter (i.e., a discrete clock),
and general infinite memory.
Strategies using only a step counter are also called \emph{Markov strategies}
\cite{Puterman:book}.
Moreover, there can be combinations of these, e.g., a step counter plus some
finite general purpose memory.
Other types of memory are possible, e.g., an unbounded stack or a queue, but they are less
common in the literature.

\ignore{
To establish an upper bound $X$ on the strategy complexity of an objective
in countable MDPs, it suffices to prove that there always exist good
($\eps$-optimal, resp.\ optimal) strategies in some class of strategies $X$.
Lower bounds on the strategy complexity of an objective 
can only be established in the sense of proving that good
strategies for the objective do not exist in some classes $Y$, $Z$, etc.

Note that different classes of strategies are not always comparable,
for several reasons.
First, different types of memory may be incomparable. E.g.,
a step counter uses infinite memory, but it is updated in a very particular
way, and thus it does not subsume a finite general purpose memory.
Second, randomized strategies are more general than deterministic ones if they
use the same memory, but not if the memory is different.
E.g., randomized positional strategies are incomparable to deterministic
strategies with finite memory (or a step counter).
Since strategy classes are not always comparable, there
can be cases with several incomparable upper/lower bounds.

Moreover, there is no weakest type of infinite memory with restricted use.
Hence, upper and lower bounds on the strategy complexity of an objective
can be only be tight \emph{relative} to the
considered alternative strategy classes (e.g., the canonical classes mentioned above).
}

The upper bound tells us that a certain amount/type of memory is sufficient for
a good ($\eps$-optimal, resp.\ optimal)
strategy, while the lower bound tells us what is not sufficient.
By Rand(X) (resp.\ Det(X)) we denote the classes of randomized
(resp.\ deterministic) strategies that use memory of size/type X.
SC denotes a step counter (aka a discrete global clock),
and F denotes arbitrary finite memory.
1-bit is a special case of F, where only 1 bit of memory is used.
Positional (aka memoryless or stationary) means that no memory is used.
E.g., a lower bound $\neg$Rand(SC) means that randomized strategies that use just
a step counter are not sufficient.

\paragraph{Finite-state vs.\ Infinite-state MDPs.}
If an MDP has only finitely many states and transitions,
then there are only finitely many \emph{different} transition rewards,
and thus the $\limsup$ (resp.\ $\liminf$) threshold objective coincides with the
objective of seeing rewards $\ge 0$ infinitely often
(resp.\ of seeing rewards $<0$ only finitely often), even if real-valued
transition rewards are allowed.
In contrast, in an MDP with a countably infinite number of transitions,
one could have a sequence of
rewards $-1/2, -1/3, -1/4 \dots$ that does satisfy $\limsup \ge 0$
and $\liminf \ge 0$ even though
each individual reward is negative.
Thus, in MDPs with a countably infinite number of states/transitions,
the $\limsup$ (resp.\ $\liminf$) threshold objective
is strictly more general than the B\"uchi (resp.\ co-B\"uchi) objective
of seeing certain states/transitions infinitely often (resp.\ finitely often); cf.~\Cref{sec:connections}.
Moreover, optimal strategies need not exist in countably infinite-state MDPs
(not even for much simpler objectives like reachability),
$\eps$-optimal (resp.\ optimal) strategies can require
infinite memory, and computational problems are not defined in general, since
a countable MDP need not be finitely presented.
See \cite{KMSW2017} for a more detailed discussion of the differences between
finite-state and infinite-state MDPs.

\cite{Hill:79,Hill-Pestien:1987,Pestien-Wang:1993} discuss gambling problems with \emph{finitely many controlled
states} but infinite action sets. In our terminology, these are MDPs with
finitely many controlled states but infinitely many random states and also
infinite branching and infinitely many different transitions.
Optimal strategies for $\limsup$ (resp.\ $\liminf$)
need not exist in this case, but $\eps$-optimal strategies can be chosen as
deterministic Markov.
On the other hand, if there are only finitely many states and transitions then 
there always exist optimal
deterministic stationary strategies for the objectives considered in this
paper, because it suffices to visit certain subsets of transitions infinitely
often for $\limsup$ (i.e., repeated reachability), resp.\ to visit certain
sets of transitions only finitely often for $\liminf$ (i.e., eventual safety).

\paragraph{Our contribution.}
We establish the complete picture of
the strategy complexity of the $\limsup$ and $\liminf$ threshold objectives
for \emph{countably infinite-state} MDPs.
\Cref{tab:limsup} shows the upper and lower bounds on the strategy complexity
of the 
$\limsupppobj$ threshold objective, i.e., to maximize the probability that the
$\limsup$ of the daily payoffs is non-negative. These bounds depend on whether
one considers $\eps$-optimal strategies or optimal strategies (where they exist).
Similarly, \Cref{tab:liminf} shows the upper and lower bounds on the strategy complexity
of the $\liminfppobj$  threshold objective, i.e., to maximize the probability that the
$\liminf$ of the daily payoffs is non-negative.
Here the bounds depend on whether the MDPs are infinitely branching or
finitely branching. While the bounds for $\eps$-optimal strategies and optimal
strategies coincide for this objective, the proofs are different.

\begin{table}[tbp]
\begin{tabular}{|l||l|l|}
\hline
$\limsupppobj$ (resp., $\bigcap_{i \in \N} \G \F \, A_i$)  & Upper bound                                                       & Lower bound                                                 \\ \hline
\multirow{2}{*}{$\eps$-optimal strategies} & \multirow{2}{*}{Det(SC + 1-bit) \ref{cor:epsGF}} & $\neg$Rand(SC) \ref{thm:buchippextract} and \\
                                           &                                                                   & $\neg$Rand(F) \ref{thm:ppepstep}           \\ \hline
\multirow{2}{*}{Optimal strategies}                         & Rand(Positional) \ref{cor:GFmr} or               & \multirow{2}{*}{$\neg$Det(F) \ref{thm:limsuplowerdetf}  }   \\
                                           & Det(SC) \ref{cor:GFsc}                           &                                                             \\ \hline
\end{tabular}
\caption{Summary of strategy complexity results for the 
$\limsupppobj$ threshold objective, i.e., to maximize the probability that the
$\limsup$ of the daily payoffs is non-negative.
(The formula $\bigcap_{i \in \N} \G \F \, A_i$ is an equivalent description of
this objective in terms of temporal logic; see \Cref{sec:prelim,sec:connections}.)
Note that the (infinite vs.\ finite) branching degree of the
  MDP does not affect the strategy complexity here.}\label{tab:limsup}
\end{table}

\begin{table}[tbp]
\begin{tabular}{|ll||l|l|}
\hline
\multicolumn{2}{|l||}{$\liminfppobj$ (resp., $\bigcap_{i \in \N} \F \G \, A_i$)}                         & Upper bound     & Lower bound   \\ \hline
\multicolumn{1}{|l|}{\multirow{2}{*}{Infinitely Branching}} & Optimal
                                                              strategies
                                                                                                           &
                                                                                                             Det(SC) \ref{infoptupperpp} & $\neg$Rand(F) \ref{infbranchsteplower} \\
\cmidrule{2-4} 
\multicolumn{1}{|l|}{}                                      & $\eps$-optimal
                                                              strategies &
                                                                           Det(SC)
                                                                           \ref{infepsupperpp}        & $\neg$Rand(F) \ref{infbranchsteplower} \\ \hline
\multicolumn{1}{|l|}{\multirow{2}{*}{Finitely Branching}}   & Optimal
                                                              strategies
                                                                                                           &
                                                                                                             Det(Positional) \ref{finoptupperpp} & Trivial       \\
\cmidrule{2-4} 
\multicolumn{1}{|l|}{}                                      & $\eps$-optimal
                                                              strategies &
                                                                           Det(Positional) \ref{finpointpayoff}  & Trivial       \\ \hline
\end{tabular}
\caption{Summary of strategy complexity results for
the $\liminfppobj$ threshold objective, i.e., to maximize the probability that the
$\liminf$ of the daily payoffs is non-negative.
(The formula $\bigcap_{i \in \N} \F \G \, A_i$ is an equivalent description of
this objective in terms of temporal logic; see \Cref{sec:prelim,sec:connections}.)
The lower bounds for the finitely branching case are trivial, because
deterministic positional strategies are the simplest type.
}\label{tab:liminf}
\end{table}

Our results generalize the results on the strategy complexity of 
the B\"uchi and co-B\"uchi objectives in countably infinite-state MDPs of \cite{KMST:ICALP2019,KMST2020c}.

We then apply our results on the threshold objectives
to solve two open problems from 
\cite[p.43 and p.53]{Sudderth:2020} about the strategy complexity of optimal
strategies for the \emph{expected} $\limsup$ (resp.\ $\liminf$).

Now we discuss our contributions in more detail.

In \Cref{sec:nestedbuchi} we show that $\eps$-optimal strategies for 
the $\limsup$ threshold objective require exactly a step counter plus one bit
of memory, thus extending the result on B\"uchi objectives in
\cite{KMST:ICALP2019}.
Moreover, we show that optimal strategies, if they exist, can be chosen
as deterministic Markov, or alternatively as positional randomized.
In particular this implies that optimal strategies for the \emph{expected} $\limsup$,
if they exist, can also be chosen as positional randomized, which solves the open question in
\cite[p.53]{Sudderth:2020}; see \Cref{sec:expected}.

In \Cref{sec:nestedcobuchi} we show that $\eps$-optimal (resp.\ optimal) strategies for 
the $\liminf$ threshold objective can be chosen as deterministic Markov.
In the special case of finitely branching countable MDPs, even positional deterministic strategies suffice.
The former result is then applied in \Cref{sec:expected}
to show that optimal strategies for the expected $\liminf$, if they exist,
can also be chosen as deterministic Markov, which solves the open question in
\cite[p.43]{Sudderth:2020} (where this property was shown only for MDPs with
\emph{finitely many} controlled states).

\section{Preliminaries}\label{sec:prelim}

Let $\R$ denote the set of real numbers and $\N$ denote the set of
non-negative integers (including $0$).

\paragraph{Markov decision processes.}
A \emph{probability distribution} over a countable set $S$ is a function
$f:\states\mapsto[0,1]$ with $\sum_{\state\in \states}f(\state)=1$.
Let 
$\dist(\states)$ be the set of all probability distributions over $\states$. 
A \emph{Markov decision process} (MDP) $\mdp$ is described by the tuple $\mdptuple$.
The countable set $\states$ of \emph{states} 
is partitioned into a set $\zstates$ of \emph{controlled states} 
and  a set $\rstates$ of \emph{random states}.
The \emph{transition relation} is $\transition\subseteq\states\x\states$.
We  write $\state\transition{}\state'$ if $\tuple{\state,\state'}\in \transition$,
and  refer to~$s'$ as a \emph{successor} of~$s$.
Let $\successors{s} \eqdef \{s' \mid \state\transition{}\state'\}$ be the set of
successor states of $s$.
We assume that every state has at least one successor.
The \emph{probability function}~$\probp$
assigns each random state $s \in \rstates$ a distribution over
its successor states, i.e., $\probp(s) \in \dist(\successors{s})$. 
The reward function $r$ assigns real-valued rewards to transitions.
(We mainly use transition-based rewards in this paper.
Alternatively, one can consider state-based reward functions $r: S \mapsto \R$.
Transition-based rewards can easily be encoded into state-based rewards and
vice-versa; see \Cref{sec:connections}.)

An MDP is \emph{acyclic} if the underlying directed graph~$(S,\transition)$ is acyclic, i.e., 
there is no directed cycle.
It is  \emph{finitely branching} 
if every state has finitely many successors
and \emph{infinitely branching} otherwise.
An MDP without controlled states
($\zstates=\emptyset$) is called a \emph{Markov chain}.

\paragraph{Strategies and probability measures.}
A \emph{run} is an  infinite sequence of states and transitions
$\play = \state_0e_0\state_1e_1\cdots$ 
such that $e_i = (\state_i, \state_{i+1}) \in \transition$
for all~$i\in \mathbb{N}$.
Let $\Runs{\mdp}{\state_0}$
be the set of all runs from state $\state_0$ in the MDP $\mdp$.
A \emph{history} is a finite prefix of a run that ends in some
controlled state $\state \in \zstates$.
Let $\pRuns{\mdp}{\state_0}$ denote the set of all histories starting
in $\state_0$ and let $\pRuns{\mdp}{}$ denote the set of histories from any state in $\mdp$.

For a run $\play = \state_0e_0\state_1e_1\cdots$,  
we write~$\play_s(i)\eqdef\state_i$ for the $i$-th state along~$\play$
and $\play_e(i)\eqdef e_i$ for the $i$-th transition along~$\play$.
Let $\rho_i \eqdef \state_ie_i\state_{i+1}e_{i+1}\cdots$ be the suffix of $\rho$
that starts at state $s_i$.
We sometimes write runs as $\state_0\state_1\cdots$, leaving the transitions implicit.
We say that a run $\play$ \emph{visits} $\state$ if
$\state=\play_s(i)$ for some $i$, and that~$\play$ starts in~$s$ if $\state=\play_s(0)$. 

A \emph{strategy} 
is a function $\zstrat:\pRuns{\mdp} \mapsto \dist(S)$ that 
assigns to each history $w\state$ (where $\state \in \zstates$),
a distribution over the successors $\successors{\state}$ of $\state$.
The set of all strategies  in $\mdp$ is denoted by $\zstratset_\mdp$ 
(we omit the subscript and write~$\zstratset$ if $\mdp$ is clear from the context).
A run~$\state_0e_0\state_1e_1\cdots$ is consistent with a strategy~$\zstrat$
if for all~$i$
either $\state_i \in \zstates$ and $\zstrat(\state_0e_0\state_1e_1\cdots\state_i)(\state_{i+1})>0$,
or
$\state_i \in \rstates$ and $\probp(\state_i)(\state_{i+1})>0$.

An MDP $\mdp=\mdptuple$, an initial state $\state_0\in \states$, and a strategy~$\zstrat$ 
induce a probability space in which the outcomes are runs starting in $\state_0$
 with measure $\probm_{\mdp,\state_0,\zstrat}$
defined as follows.
It is first defined on \emph{cylinders}, i.e., sets of runs of the form
$s_0 e_0 s_1 e_1 \ldots s_n \ldots$ sharing a common finite prefix.
If $s_0 e_0 s_1 e_1 \ldots s_n$
is not consistent with~$\zstrat$ then
$\probm_{\mdp,\state_0,\zstrat}(s_0 e_0 s_1 e_1 \ldots s_n \ldots) \eqdef 0$,
and otherwise
\begin{align*}
\probm_{\mdp,\state_0,\zstrat}(s_0 e_0 s_1 e_1 \ldots
s_n \ldots) 
\eqdef \prod_{i=0}^{n-1} \bar{\zstrat}(s_0 e_0
s_1 \ldots s_i)(s_{i+1})
\end{align*}
where $\bar{\zstrat}$ is the map that
extends~$\zstrat$ by $\bar{\zstrat}(w s) = \probp(s)$
for all histories $w s$.
By Carath\'eodory's extension theorem~\cite{billingsley-1995-probability}, 
this extends uniquely to a probability measure~$\probm_{\mdp,\state_0,\zstrat}$ on 
the Borel $\sigma$-algebra $\?F$ of subsets of~$\Runs{\mdp}{s_0}$ induced by
the cylinders.
A set in $\?F$ is called an \emph{event}.
General objectives are defined by real-valued bounded measurable functions (w.r.t $\?F$).
In some cases (see below), this function is just an \emph{indicator function} of some
event $Y \in \?F$, i.e., one tries to maximize the probability of the event
$Y$. In such cases we identify the objective with the relevant event.
For $Y\in\?F$ we write $\complementof{Y}\eqdef (\Runs{\mdp}{s_0}\setminus Y) \in \?F$ for its complement.
In the case of general objectives, we write $\expectval_{\mdp,\state_0,\zstrat}$
for the expectation wrt.~$\probm_{\mdp,\state_0,\zstrat}$.
We drop the indices if possible without
ambiguity.

\paragraph{The operators ``eventually'' and ``always''.}
For a compact formal notation to specify properties of runs, we use
the operators $\F$ (eventually) and $\G$ (always) and their extensions with
time bounds \cite{ModCheckHB18,CGP:book}.
$\eventually \formula$ denotes all runs $\play$ that have a suffix that
satisfies property $\formula$, i.e., there exists an $i \ge 0$ such that
$\play_i$ satisfies $\formula$.
Similarly, $\eventually^{\le k} \formula$ denotes all runs $\play$
where there exists an $i \le k$ such that
$\play_i$ satisfies $\formula$.
The operator $\always$ (always) is defined as $\neg\eventually\neg$.
Similarly, the operator $\always^{\leq k}$ (for all times until time $k$) 
is defined as $\neg \eventually^{\leq k} \neg$.
Combined time bounds can be specified by $\G^{[m,n]}$
(for all times between $m$ and $n$), i.e.,
a run $\play$ satisfies $\G^{[m,n]} \formula$ iff for all
$i$ with $m \le i \le n$ we have that $\play_i$ satisfies $\formula$.
For sets of states (resp.\ transitions) $X$ we just write $X$ to
denote the property that the first state (resp.\ transition)
of a run is in $X$.
E.g., if $X$ is a set of states
then $\eventually X$ denotes the set of runs that eventually
visit the set $X$.
Similarly, $\always \eventually X$ denotes the set of runs
that visit the set $X$ infinitely often (i.e., always eventually $X$).
Dually, $\eventually\always \neg X$ denotes the set of runs
that visit $X$ only finitely often (i.e., eventually always not $X$).
Sets of runs specified by (combinations of) these operators are measurable
\cite{Vardi:probabilistic}.

\paragraph{Transience and shift invariance.}
Given an MDP $\mdp=\mdptuple$, consider the objective 
$\transience \eqdef \bigwedge_{s \in S} \F \G \neg s$.
That is to say, $\transience$ is the objective to see no state infinitely often. 
An MDP $\mdp$ is called \textit{universally transient} \cite{KMST:Transient-arxiv}
if $\transience$ is satisfied almost surely from every state $\state_0$ under
all strategies, i.e.,
$\forall \state_0\forall \zstrat \in \zstratset_{\mdp} \, \probm_{\mdp, \state_0, \zstrat}(\transience) = 1$.
In particular, all acyclic MDPs are universally transient, but not only these.
E.g., consider the Markov chain for the Gambler's ruin with restarts and a probability $p$ of
winning. 
It is strongly connected and contains cycles, but it is still
universally transient for any $p > 1/2$, but not for $p \le 1/2$.

An event-based objective $\formula$ is called \emph{shift invariant in $\mdp$}
iff for every run
$\rho'\rho$ in $\mdp$ with some finite prefix $\rho'$ we have
$\rho'\rho \in \formula \Leftrightarrow \rho \in \formula$.
An objective is called \emph{shift invariant} if it is shift invariant in every MDP.

\paragraph{Strategy classes.}
Strategies are in general  \emph{randomized} (aka mixed)
in the sense that they take values in $\dist(\states)$. 
A strategy~$\zstrat$ is \emph{deterministic} (aka pure) if $\zstrat(\rho)$ is a Dirac
distribution for all $\rho$.
General strategies can be \emph{history dependent}, while others are
restricted by the size or type of memory they use; see below.
We consider certain classes of strategies:
\begin{itemize}
\item
  A strategy $\zstrat$ is \emph{positional} (also called \emph{memoryless}
  or \emph{stationary})
if its choices depend only on the current state.
We may describe positional strategies as functions $\zstrat: \zstates \mapsto
\dist(\states)$ where $\zstrat(\state) \in \dist(\successors{\state})$.
Memoryless deterministic (resp.\ randomized) strategies are also abbreviated as MD
(resp.\ MR).
\item 
A strategy~$\zstrat$ is \emph{finite memory}~(F) if 
there exists a finite memory~$\memory$ implementing~$\zstrat$.
(See \Cref{app-def} for a formal definition how strategies use memory.)
Hence Rand(F) (resp.\ Det(F)) stands for finite memory randomized
(resp.\ deterministic) strategies.
They are also abbreviated as FR (resp.\ FD).
\item
A step counter strategy bases decisions only on
the current state and the number of steps taken so far, i.e., it uses
an unbounded integer counter that gets incremented by $1$ in every step
(like a discrete clock).
Such strategies are also called \emph{Markov strategies} \cite{Puterman:book}.
Rand(SC) (resp.\ Det(SC)) stands for step counter using randomized
(resp.\ deterministic) strategies.
\end{itemize}
A step counter strategy uses infinite memory, but only in a very restricted
way, since the player has no control over the memory updates.
Thus step counter strategies do \emph{not} subsume finite memory strategies.
Combinations of the above types of memory are possible, e.g.,
Det(SC + 1-bit) stands for deterministic strategies using a step counter plus
one bit of general purpose memory. See also \cite{KMST2020c}.

\paragraph{Optimal and \texorpdfstring{$\eps$-optimal}{epsilon-optimal} strategies.}
Given an objective that is defined as an event~$\formula$
(i.e., the objective function is the indicator function of $\formula$),
the value of state~$s$ in an MDP~$\mdp$, denoted by 
$\valueof{\mdp,\formula}{s}$, is the supremum probability of
achieving~$\formula$, i.e.,
$\valueof{\mdp,\formula}{s} \eqdef\sup_{\sigma \in \Sigma}
\probm_{\mdp,\state,\zstrat}(\formula)$ where $\Sigma$ is the set of all strategies.
Similarly with $\expectval_{\mdp,\state_0,\zstrat}$ for general objectives
defined via bounded measurable functions.
For $\eps\ge 0$ and state~$s\in\states$, we say that a strategy is \emph{$\eps$-optimal} from $s$
iff $\probm_{\mdp,\state,\zstrat}(\formula) \geq \valueof{\mdp,\formula}{s} -\eps$
(resp.\ iff $\expectval_{\mdp,\state,\zstrat}(\formula) \geq
\valueof{\mdp,\formula}{s} -\eps$ for objectives wrt.\ bounded measurable
functions $\formula$).
A $0$-optimal strategy is called \emph{optimal}. 
An optimal strategy for some event-based objective
is \emph{almost-surely winning} if $\valueof{\mdp,\formula}{s} = 1$. 
Considering a positional strategy as a function
$\zstrat: \zstates \mapsto \dist(\states)$
(where $\zstrat(\state) \in \dist(\successors{\state})$)
and $\eps\ge 0$, $\zstrat$
is \emph{uniformly} $\eps$-optimal  (resp.~uniformly optimal) if it is
$\eps$-optimal (resp.~optimal) from \emph{every} state $\state\in \states$.
(A closely related concept is a \emph{stationary} strategy, which
  also bases decisions only on the current state. However, some authors call a
  strategy ``stationary $\eps$-optimal'' if it is $\eps$-optimal from every state (i.e., uniform in our terminology),
and call it ``semi-stationary'' if it is $\eps$-optimal only from some fixed initial state.)

\paragraph{The step counter encoded MDP.}
Given an MDP $\mdp$, we define the MDP $S(\mdp)$ which has a step counter encoded into the state. This will allow us to obtain Markov strategies in $\mdp$ from positional strategies in $S(\mdp)$.

\smallskip
\begin{definition}\label{def:encodestep}
Let $\mdp$ be an MDP with an initial state $s_{0}$. 
We then construct the MDP $S(\mdp) \eqdef (S', \zstates', \rstates', \longrightarrow_{S(\mdp)}, P')$ as follows:
\begin{itemize}
\item The state space of $S(\mdp)$ is 
$S' \eqdef \{ (s,n) \mid s \in S \text{ and } n \in \N \}$.
Note that $S'$ is countable.
We write $s_{0}'$ for the initial state $(s_{0},0)$.
\item $\zstates' \eqdef \{ (s,n) \in S' \mid s \in \zstates \text{ and } n \in \N\}$
and $\rstates' \eqdef S' \setminus \zstates'$.
\item The set of transitions in $S(\mdp)$ is 
\[ \longrightarrow_{S(\mdp)} \eqdef 
\{ 
\left( (s,n),(s',n+1) \right) \mid (s,n),(s',n+1) \in S', 
 s \longrightarrow_{\mdp} s' 
\}.
\]
\item $P': \rstates' \mapsto \mathcal{D}(S')$ is defined such that 
\[
P'(s,n)(s',n+1) \eqdef  
    \begin{cases}
    P(s)(s') & \text{ if } (s,n) \longrightarrow_{S(\mdp)} (s',n+1) \\
    0 & \text{ otherwise }
    \end{cases}
\]

\item If $\mdp$ has rewards, then $r((s,n) \longrightarrow_{S(\mdp)} (s',n+1)) \eqdef r(s \longrightarrow_{\mdp} s')$.
\end{itemize}
\end{definition}

\smallskip
\begin{lemma}\label{steptopoint}
Let $\mdp$ be an MDP with initial state $s_{0}$. Let $\varphi$ be an objective
depending only on the sequence of seen transition rewards (the daily payoffs).
For every finite-memory strategy $\sigma'$ from state $(s_{0},0)$ in $S(\mdp)$ there exists a
corresponding strategy $\sigma$ from state $s_0$ in $\mdp$
which uses the same memory as $\sigma'$ plus a step counter, such that
$\probm_{S(\mdp),(s_0,0),\sigma'}(\varphi)
= \probm_{\mdp,s_0,\sigma}(\varphi)$.
\end{lemma}
\begin{proof}
Let $\sigma'$ be a finite-memory strategy in $S(\mdp)$ 
from state $(s_{0},0)$.
We define a strategy $\sigma$ on $\mdp$ from $s_0$ that uses the same memory
as $\sigma'$ plus a step counter: $\sigma$ plays on $\mdp$ exactly like
$\sigma'$ plays on $S(\mdp)$ by keeping the step counter in its memory instead
of in the state, i.e., at any given state $s$ and step counter value $n$,
$\sigma$ plays exactly as $\sigma'$ plays in state $(s,n)$, and it updates its
memory in the same way.
By our construction of $S(\mdp)$ and the definition of $\sigma$,
the sequences of transition rewards seen by $\sigma'$ in runs on $S(\mdp)$
coincide with the sequences of transition rewards seen by $\sigma$ in runs in $\mdp$.
Hence we obtain
$\probm_{S(\mdp),(s_0,0),\sigma'}(\varphi)
= \probm_{\mdp,s_0,\sigma}(\varphi)$.
\end{proof}

\paragraph{The conditioned MDP.}
We recall the notion of the conditioned MDP $\pmdp$, which will allow us to
lift $\eps$-optimal strategies to \emph{uniformly} $\eps$-optimal strategies.
Moreover, it can be used to lift $\eps$-optimal strategies to optimal ones in some cases.

The intuition is as follows. Given an MDP $\mdp$ and an 
objective $\formula$ that is shift invariant in $\mdp$,
the corresponding conditioned MDP $\pmdp$
adds a losing sink state $s_\bot$ and modifies the probabilities such
that all states, except for $s_\bot$, have value $1$ wrt.\ $\formula$.
In more detail, these modified probabilities make it more likely in $\pmdp$
to go to states that have a high value (wrt.\ $\formula$) in $\mdp$. 

\smallskip
\begin{definition}[{\cite[Def. 12]{KMST:Transient-arxiv}}]
\label{def:conditionedmdp}
For an MDP $\mdp=\mdptuple$ and an objective~$\formula$ that is shift invariant in~$\mdp$,
define the \emph{conditioned version} of~$\mdp$ w.r.t.~$\formula$
to be the MDP $\pmdp = \tuple{\pstates,\pzstates,\prstates,\ptransition,\pprobp}$ with
\begin{align*}
\pzstates \ = \ &\{s \in \zstates \mid \valueof{\mdp}{s} > 0 \} \\
\prstates \ = \ &\{s \in \rstates \mid \valueof{\mdp}{s} > 0 \} \cup
                  \{s_\bot\} \\
                & \cup \{(s,t) \in \mathord{\transition} \mid s \in \zstates,\ \valueof{\mdp}{s} > 0\} \\
\ptransition \ = \ &\{(s,(s,t)) \in (\zstates \times \mathord\transition) \mid \valueof{\mdp}{s}>0,\ s \transition t\}  \cup \mbox{} \\
                   &\{(s,t) \in \rstates \times \states \mid \valueof{\mdp}{s}>0,\ \valueof{\mdp}{t}>0\} \cup \mbox{} \\
                   &\{((s,t),t) \in (\mathord\transition \times S) \mid \valueof{\mdp}{s}>0,\ \valueof{\mdp}{t}>0\} \cup \mbox{} \\
                   &\{((s,t),s_\bot) \in (\mathord\transition \times \{s_\bot\}) \mid \valueof{\mdp}{s}>\valueof{\mdp}{t}\} \\
\pprobp(s)(t) \ = \ &\probp(s)(t) \cdot \frac{\valueof{\mdp}{t}}{\valueof{\mdp}{s}} \hspace{20mm} 
\pprobp((s,t))(t) \ = \ \frac{\valueof{\mdp}{t}}{\valueof{\mdp}{s}} \hspace{20mm}\\
\pprobp((s,t))(s_\bot) \ = \ & 1 - \frac{\valueof{\mdp}{t}}{\valueof{\mdp}{s}} 
\end{align*}
The transition rewards are carried from $\transition$ to $\ptransition$ in the
natural way, i.e., $r(s,(s,t)) = r((s,t), t) = r(s,t)$.
Finally, we add an infinite chain of fresh states and transitions $s_\bot \to s_\bot^1 \to s_\bot^2 \to \cdots$ 
with rewards suitably defined such that it is losing for the objective $\formula$.
\end{definition}

\smallskip
\begin{lemma}[{\cite[Lemma 13.3 and Lemma 16]{KMST:Transient-arxiv}}]\label{lem:conditioned-construction}
Let $\mdp=\mdptuple$ be an MDP, and let $\formula$ be an objective that is
shift invariant in~$\mdp$.
Let $\pmdp = \tuple{\pstates,\pzstates,\prstates,\ptransition,\pprobp}$ be the conditioned version of~$\mdp$ w.r.t.~$\formula$.
Let $s_0 \in \pstates \cap \states$.
Let $\zstrat \in \zstratset_{\pmdp}$, and note that $\zstrat$ can be transformed to a strategy in~$\mdp$ in a natural way.

We have
$\valueof{\mdp}{s_0} \cdot \probm_{\pmdp,s_0,\zstrat}(\formula) = \probm_{\mdp,s_0,\zstrat}(\formula)$.
In particular, $\valueof{\pmdp}{s_0} = 1$, and, for any $\eps \ge 0$,
strategy~$\sigma$ is $\eps$-optimal in~$\pmdp$ if and only if it is
$\eps \valueof{\mdp}{s_0}$-optimal in~$\mdp$.
Moreover, if $\mdp$ is universally transient, then so is $\pmdp$.
\end{lemma}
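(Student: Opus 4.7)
The plan is to first prove the probability identity $\valueof{\mdp}{s_0}\cdot\probm_{\pmdp,s_0,\zstrat}(\formula)=\probm_{\mdp,s_0,\zstrat}(\formula)$ and to deduce the remaining claims as corollaries. The conditioned MDP is rigged so that at every step a fraction of probability mass proportional to the $\mdp$-value drop from $s$ to the chosen successor $t$ is siphoned off into the losing chain $s_\bot\to s_\bot^1\to\cdots$; this is exactly the reweighting needed to normalise the value to $1$ while preserving the \emph{relative} probabilities of winning runs. In martingale language the construction is a Doob $h$-transform with $h=\valueof{\mdp}{\cdot}$.

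The key technical step is a cylinder calculation. Let $w=s_0 s_1\cdots s_n$ be a partial run in $\mdp$ consistent with $\zstrat$ whose states all have positive value, and let $w'$ be the corresponding partial run in $\pmdp$ obtained by inserting the gadget state $(s_i,s_{i+1})$ after each controlled transition. By the definition of $\pprobp$, every step of $w'$ contributes a multiplicative factor $\valueof{\mdp}{s_{i+1}}/\valueof{\mdp}{s_i}$ relative to the corresponding step of $w$ (for a controlled step this factor sits on the $(s_i,s_{i+1})\to s_{i+1}$ edge, while for a random step it is baked directly into $\pprobp$), and telescoping yields
\[
\probm_{\pmdp,s_0,\zstrat}(\mathrm{Cyl}(w'))\;=\;\frac{\valueof{\mdp}{s_n}}{\valueof{\mdp}{s_0}}\,\probm_{\mdp,s_0,\zstrat}(\mathrm{Cyl}(w)).
\]

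To upgrade to the tail event $\formula$, I would first use shift invariance together with the definition of value (which forces $\probm_{\mdp,s,\zstrat'}(\formula)=0$ whenever $\valueof{\mdp}{s}=0$) to conclude that $\probm_{\mdp,s_0,\zstrat}$-almost every $\formula$-run stays in positive-value states, and hence lifts uniquely to a $\pmdp$-run that avoids $s_\bot$. Reading the cylinder identity as saying that $V_n/\valueof{\mdp}{s_0}$ with $V_n:=\valueof{\mdp}{s_n}$ is the density at time $n$, I then note that under $\probm_{\mdp,s_0,\zstrat}$ the bounded supermartingale $V_n$ dominates the conditional-probability martingale $\probm_{\mdp,s_0,\zstrat}(\formula\mid\mathcal{F}_n)$; Doob's theorem and Levy's zero--one law give a.s.\ limits, and the sandwich $\mathbf{1}[\formula]\le V_\infty\le 1$ forces $V_\infty=1$ on $\formula$. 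Passing to the limit and integrating over the shift-invariant event $\formula$ gives the identity. Taking $\sup_\zstrat$ of both sides then yields $\valueof{\pmdp}{s_0}=1$, and the $\eps$-optimality equivalence is immediate by substitution. For universal transience, every $\pmdp$-run either eventually enters $s_\bot$ (after which it walks a chain of fresh distinct states, each visited at most once), or stays forever in the main part; in the latter case each original-state visit count matches that of the projected $\mdp$-run, and each gadget state $(s,t)$ is visited at most as often as $s$ itself, so a.s.\ finitely often by universal transience of $\mdp$.

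The hard part is the tail-event step: turning the cylinder identity into the identity on $\formula$ requires the martingale argument above, or equivalently a monotone-class reduction through approximating events of the form ``$\formula$ holds within the first $k$ steps'' together with a continuity argument. Shift invariance of $\formula$ in $\mdp$ is essential in both approaches, both to discard zero-value excursions losslessly and to identify the limit of the value process on winning runs with $1$.
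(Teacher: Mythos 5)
The first thing to note is that the paper does not prove this lemma at all: it is imported, with attribution, from \cite[Lemma 13.3 and Lemma 16]{KMST:Transient-arxiv}, so there is no in-paper proof to compare your attempt against. Judged on its own merits, your reconstruction is sound and follows the route one would expect (and which the cited source takes): viewing $\pmdp$ as a Doob $h$-transform of $\mdp$ by the value function. The key ingredients are all present and correct: the telescoping cylinder identity with density $\valueof{\mdp}{s_n}/\valueof{\mdp}{s_0}$; the observation (using shift invariance) that $\formula$-runs almost surely never enter value-zero states, hence lift to $\pmdp$-runs avoiding $s_\bot$; the supermartingale property of $V_n=\valueof{\mdp}{s_n}$, which together with L\'evy's zero--one law and the domination $\probm_{\mdp,s_0,\zstrat}(\formula\mid\mathcal{F}_n)\le V_n$ forces $V_\infty=1$ almost surely on $\formula$; and a limiting/monotone-class step to pass from cylinders to the tail event. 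The consequences ($\valueof{\pmdp}{s_0}=1$ and the $\eps$-optimality correspondence) then follow by taking suprema, as you say, and your case analysis for transience (runs hitting $s_\bot$ are trivially transient; runs avoiding it project onto $\mdp$-runs) is the right decomposition.

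Two points should be tightened. First, your fallback of ``approximating events of the form `$\formula$ holds within the first $k$ steps''' is vacuous for a shift-invariant $\formula$ (no such event is nontrivial); the workable form of the monotone-class step is to prove, on the $\pi$-system of positive-value cylinders and via the truncations $\neg\F^{\le m} s_\bot$ with dominated convergence, the measure identity $\valueof{\mdp}{s_0}\cdot\probm_{\pmdp,s_0,\zstrat}(\pi^{-1}E)=\int_E V_\infty\,d\probm_{\mdp,s_0,\zstrat}$ for all measurable $E$ contained in the set of runs staying in positive-value states (where $\pi$ denotes the projection deleting gadget states), and then apply it to $E=\formula$ using $V_\infty=1$ a.s.\ on $\formula$. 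Second, in the transience part, the phrase ``a.s.\ finitely often by universal transience of $\mdp$'' silently transfers a null set of $\probm_{\mdp,s_0,\zstrat}$ to a null set of $\probm_{\pmdp,s_0,\zstrat}$; this transfer is legitimate, but only because the identity just established makes the projected $\pmdp$-measure absolutely continuous with respect to the $\mdp$-measure on runs avoiding $s_\bot$, so the transience claim depends on the first part of the lemma and that dependence should be stated explicitly.
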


\smallskip
\begin{lemma}\label{lem:mr-as-uniform}
Let $\mdp=\mdptuple$ be a countable MDP, and $\formula$ an objective that is
shift invariant in~$\mdp$.
Let $\states' \subseteq \states$ be the subset of states that admit an
almost surely winning strategy for $\formula$.

Assume that from every state $\state_0 \in \states'$
there even exists some
almost surely winning deterministic positional (resp.\ randomized positional) strategy.

Then there exists a deterministic positional (resp.\ randomized positional)
strategy $\zstrat$ such that $\zstrat$ 
is almost surely winning from \emph{every}
state in $\states'$.
\end{lemma}
\begin{proof}
Pick an arbitrary state $\state_0^1 \in \states'$ and an a.s.\ winning
deterministic positional (resp.\ randomized positional)
strategy
$\zstrat^1$ from $\state_0^1$. Let $\states_1 \subseteq \states'$ be the set of
states that $\zstrat_1$ reaches with nonzero probability.
Since $\formula$ is shift invariant, $\zstrat^1$
must be a.s.\ winning from every state in $\states_1$.
For the next round pick a state $\state_0^2 \in \states'\setminus\states_1$
(if one exists) and an a.s.\ winning
deterministic positional (resp.\ randomized positional)
strategy
$\zstrat^2$ from $\state_0^2$ and repeat the construction, etc.

Let $\zstrat$ be the
deterministic positional (resp.\ randomized positional)
strategy that plays like $\zstrat^i$ in all states in
$\states_i \setminus \bigcup_{j < i} \states_j$.
Since $\states' = \bigcup_{i>0} \states_i$, the
deterministic positional (resp.\ randomized positional)
strategy $\zstrat$ is
a.s.\ winning from every state in $\states'$.
\end{proof}

\smallskip
\begin{lemma}\label{as-to-opt}
Consider an objective $\formula$ that is shift invariant in every MDP.
Suppose that, in every MDP (resp.\ in every universally transient MDP),
every state that admits an almost surely winning strategy for
$\formula$ even has some almost surely winning
\emph{positional} strategy $\sigma$ for $\formula$.

Then, in every MDP (resp.\ in every universally transient MDP),
there exists some 
positional strategy $\hat{\sigma}$ for $\formula$ such that
$\hat{\sigma}$ is optimal from \emph{every} state that admits an optimal strategy.
If $\sigma$ can always be chosen as deterministic, then $\hat{\sigma}$
can be chosen as deterministic.
\end{lemma}
\begin{proof}
Let $\mdp=\mdptuple$ be an MDP.
Consider the conditioned version $\pmdp = \tuple{\pstates,\pzstates,\prstates,\ptransition,\pprobp}$
of~$\mdp$ w.r.t.~$\formula$, and let $\eps=0$.
(Recall that $0$-optimal means optimal in general, and a.s.\ winning in the
case where the value of the start state is one.)
By \Cref{lem:conditioned-construction}, if $\mdp$ is universally transient
then $\pmdp$ is universally transient.
Moreover, every state that has an
optimal strategy in $\mdp$ has an a.s.\ winning strategy in $\pmdp$ and
vice-versa.
By applying \Cref{lem:mr-as-uniform} to $\pmdp$,
we obtain some
deterministic positional (resp.\ randomized positional)
strategy $\hat{\zstrat}$ that is a.s.\ winning from \emph{every}
state in $\pmdp$ that admits an a.s.\ winning strategy.
By \Cref{lem:conditioned-construction}, the same
deterministic positional (resp.\ randomized positional)
strategy $\hat{\zstrat}$ is optimal from
\emph{every} state in $\mdp$ that admits an optimal strategy.
\end{proof}

The following \Cref{epsilontooptimal} is a general result concerning shift
invariant objectives. We use this result to lift $\eps$-optimal upper bounds
to optimal upper bounds when the two bounds coincide.
Note that its preconditions are different from those of \Cref{as-to-opt}.

\smallskip
\begin{theorem}(\cite[Theorem 7]{KMST:Transient-arxiv})
\label{epsilontooptimal}
Let $\mdp=\mdptuple$ be a countable MDP, and let $\formula$ be an objective that is shift invariant in~$\mdp$.
Suppose that for every $s \in S$ there exist $\eps$-optimal MD strategies for~$\formula$.
Then:
\begin{enumerate}
\item \label{item:uniformeps} There exist uniformly $\eps$-optimal MD strategies for~$\formula$.
\item \label{item:epstooptimal} There exists an MD strategy that is optimal
  from every state that admits an optimal strategy.
\end{enumerate}
\end{theorem}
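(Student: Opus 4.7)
I will prove Part~1 directly by a patching construction on $\mdp$ itself, and then derive Part~2 by applying Part~1 inside the conditioned MDP $\pmdp$ of \Cref{def:conditionedmdp}, followed by a limit argument that lifts uniform $\eps$-optimality to optimality.

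For Part~1, fix $\eps > 0$, enumerate the states of $\mdp$ as $s_1, s_2, \dots$, and pick a positive sequence $(\eps_i)_{i \ge 1}$ with $\sum_{i} \eps_i \le \eps$. I inductively construct an MD strategy $\hat\zstrat$ together with an increasing chain of sets $C_0 \subseteq C_1 \subseteq \dots$ of already-assigned states. Initialise $C_0 \eqdef \emptyset$. At step $i$: if $s_i \in C_{i-1}$, set $C_i \eqdef C_{i-1}$; otherwise, invoke the hypothesis to obtain an MD $\eps_i$-optimal strategy $\zstrat_i$ from $s_i$, let $B_i$ be the set of states reachable with positive probability from $s_i$ under $\zstrat_i$ that are not in $C_{i-1}$, set $\hat\zstrat \eqdef \zstrat_i$ on $B_i$ (preserving earlier assignments on $C_{i-1}$), and put $C_i \eqdef C_{i-1} \cup B_i$. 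The main technical claim is that the resulting $\hat\zstrat$ is $\eps$-optimal from every state. A run starting at $s_k$ under $\hat\zstrat$ initially follows $\zstrat_k$ until, if ever, it first enters the previously-assigned region $C_{k-1}$ at some state $t$; from $t$ onward it continues according to the inherited strategy (originating from some earlier $\zstrat_{j'}$ with $j' < k$). Shift invariance of $\formula$ allows decomposition along the first-entry time into $C_{k-1}$ and charging the associated probability loss to $\eps_{j'}$. The main obstacle is a careful amortisation argument showing that across all runs from all starting states each $\eps_i$ is charged at most once, so that the total loss is bounded by $\sum_{i} \eps_i \le \eps$ and hence $\probm_{\mdp, s_k, \hat\zstrat}(\formula) \ge \valueof{\mdp,\formula}{s_k} - \eps$.

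For Part~2, I pass to the conditioned MDP $\pmdp$. By \Cref{lem:conditioned-construction}, a strategy is $\eps$-optimal in $\pmdp$ iff it is $\eps \cdot \valueof{\mdp,\formula}{s}$-optimal in $\mdp$, so the hypothesis transfers: for every state of $\pmdp$ and every $\eps > 0$ there exist MD $\eps$-optimal strategies in $\pmdp$. Moreover, a state of $\mdp$ has an optimal strategy iff the corresponding state of $\pmdp$ has an a.s.\ winning strategy. Applying Part~1 inside $\pmdp$ with $\eps = 1/n$ yields uniform $(1/n)$-optimal MD strategies $\zstrat^{(n)}$ in $\pmdp$. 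A diagonal extraction over the countably many controlled states (standard compactness in the finitely branching case; a weak-convergence or truncation argument in the infinitely branching case) produces an MD limit $\zstrat^*$. Shift invariance together with a standard measure-theoretic limit argument on the measurable event $\formula$ then yields that $\zstrat^*$ is a.s.\ winning in $\pmdp$ from every state having an a.s.\ winning strategy. Pulling back via \Cref{lem:conditioned-construction}, the same $\zstrat^*$ viewed in $\mdp$ is MD and optimal from every state of $\mdp$ that has an optimal strategy. The most delicate step throughout is the error bookkeeping in Part~1; Part~2 is essentially a reduction modulo the compactness step.
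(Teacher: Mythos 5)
First, a point of reference: the paper does not prove this theorem at all --- it is imported verbatim from \cite[Theorem~7]{KMST:Transient-arxiv} --- so your attempt has to stand on its own, and it does not: both parts contain fatal gaps, and they are precisely the known difficulties that make this Ornstein-type result genuinely hard. In Part~1, the amortisation claim (``each $\eps_i$ is charged at most once'') is false, because $\eps_{j'}$-optimality of $\zstrat_{j'}$ \emph{from $s_{j'}$} does not control the quality of $\zstrat_{j'}$ from the other states of $B_{j'}$. What shift invariance and the Markov property of MD strategies actually yield is only the \emph{weighted} bound $\probm_{\mdp,s_{j'},\zstrat_{j'}}(\F\, t)\cdot\bigl(\valueof{\mdp,\formula}{t}-\probm_{\mdp,t,\zstrat_{j'}}(\formula)\bigr)\le\eps_{j'}$, so at a state $t\in B_{j'}$ reached by $\zstrat_{j'}$ with tiny probability the loss can be close to $1$, while a later start state may enter $C_{k-1}$ exactly at $t$ with probability close to $1$. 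Concretely: let $s_1$ be a random state moving with probability $1-\eps_1$ into an a.s.\ winning sink and with probability $\eps_1$ to a controlled state $t=s_2$, which chooses between an a.s.\ winning sink and a losing sink (encode the sinks via a B\"uchi objective, which is shift invariant; optimal MD strategies exist from every state, so the hypothesis holds). The MD strategy that picks the losing sink at $t$ is $\eps_1$-optimal from $s_1$, so your step~1 may legitimately freeze it on $B_1\ni t$; step~2 then skips $s_2\in C_1$, and $\hat{\zstrat}$ attains $0$ from $t$ although $\valueof{\mdp,\formula}{t}=1$. The hypothesis only guarantees \emph{existence} of $\eps_i$-optimal MD witnesses; choosing witnesses that are simultaneously good from all states they visit is exactly the uniformity being proven, so no bookkeeping of the $\eps_i$ can close this.

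Part~2 has an independent gap: a pointwise (diagonal) limit of uniform $(1/n)$-optimal MD strategies need not be a.s.\ winning, or even $\eps$-optimal for any $\eps<1$, and there is no ``standard measure-theoretic limit argument'' for tail events --- weak convergence of the induced measures says nothing about $\probm(\formula)$ when $\formula$ is not determined by finite prefixes. Standard counterexample (all states of value $1$, so it survives passage to $\pmdp$): a controlled state $c$ chooses between an a.s.\ winning sink and state $0$ of a chain $0,1,2,\dots$ where state $i$ either steps to $i+1$ or gambles, winning with probability $1-2^{-i}$ and losing otherwise. The MD strategy ``at $c$ enter the chain, walk to $n$, then gamble'' is uniformly $2^{-n}$-optimal, but its pointwise limit walks forever and attains $0$ from $c$, even though $c$ has an a.s.\ winning strategy. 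So your compactness step fails exactly where it matters. The correct route for Part~2 is not a limit of $\eps$-optimal strategies but a lossless patching of \emph{almost surely winning} MD strategies in $\pmdp$ in the style of \Cref{lem:mr-as-uniform} (for a.s.\ winning strategies shift invariance makes the inherited continuation a.s.\ winning from every visited state, so the difficulty of Part~1 disappears); but that requires first establishing that every state of $\pmdp$ with an a.s.\ winning strategy has an a.s.\ winning \emph{MD} strategy, which your argument never does --- and which is again the hard content of \cite[Theorem~7]{KMST:Transient-arxiv}.
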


\smallskip
\begin{remark}\label{rem:uniformity}
By and large, our results are stated in terms of
$\eps$-optimal or optimal strategies from a given start state $s_0$.
Since all of the objectives that we consider are shift invariant,
it follows that \Cref{epsilontooptimal}
applies everywhere. It would be very repetitive to state a second theorem after every upper bound saying that \Cref{epsilontooptimal} applies and that therefore uniform strategies also exist. We choose therefore to save on repetitions by issuing the blanket statement that \Cref{epsilontooptimal} applies everywhere that you would expect and the relevant uniformity results hold despite not being explicitly stated.
\end{remark}

\section{Objectives and how they are connected}\label{sec:connections}

We study six objectives that are naturally presented as two
sets of three objectives which are dual to each other. They are as follows.
\begin{enumerate}
\item
The $\limsupppobj$ threshold objective aims to maximize the
probability that the $\limsup$ of the \emph{daily payoffs} (the immediate
transition rewards, not their sum or average) is $\ge 0$, i.e., 
$\limsupppobj \defeq
\{\rho \mid \limsup_{n\in \N} \reward(\rho_e(n)) \ge 0\}$.
\item
  Given a monotone decreasing (w.r.t.\ set inclusion) sequence of
  sets of transitions $\{A_i\}_{i \in \N}$, the objective
  $\bigcap_{i \in \N} \G \F \, A_i$
  aims to maximize the probability that, for all $i$,
  the set $A_i$ is visited infinitely often.
\item
The $\liminfppobj$ threshold objective aims to maximize the
probability that the $\liminf$ of the daily payoffs is $\ge 0$, i.e., 
$\liminfppobj \defeq
\{\rho \mid \liminf_{n\in \N} \reward(\rho_e(n)) \ge 0\}$.
\item
  Given a monotone decreasing (w.r.t.\ set inclusion)
  sequence of sets of transitions
  $\{A_i\}_{i \in \N}$, the objective
  $\bigcap_{i \in \N} \F \G \, A_i$
  aims to maximize the probability that, for all $i$,
  eventually only transitions in $A_i$ are visited.
\item
The $\limsupppexp$ objective aims to maximize the
\emph{expectation} of the $\limsup$ of the daily payoffs, i.e., for runs $\rho$ we want to maximize
$\expectval( \limsup_{n\in \N} \reward(\rho_e(n)))$.
\item
The $\liminfppexp$ objective aims to maximize the
\emph{expectation} of the $\liminf$ of the daily payoffs, i.e., for runs $\rho$ we want to maximize
$\expectval( \liminf_{n\in \N} \reward(\rho_e(n)))$.
\end{enumerate}

Note that the 1st objective, $\limsupppobj$, refers only to transition rewards,
while the 2nd objective, $\bigcap_{i \in \N} \G \F \, A_i$, does not refer to
the rewards at all, but instead refers to the sets $A_i$.
While these two objectives are different, we show that they have the same
strategy complexity, via mutual encodings that do not change the structure of
the MDP; cf.~\Cref{lem:nestedbuchiislimsup} and \Cref{lem:limsupisnestedbuchi}.

Similarly, the 3rd objective, $\liminfppobj$, has the same strategy
complexity as the 4th objective $\bigcap_{i \in \N} \F \G \, A_i$,
by \Cref{lem:nestedcobuchiisliminf} and \Cref{lem:liminfisnestedcobuchi}.

\smallskip
\begin{lemma}\label{lem:nestedbuchiislimsup}
Let $\mdp=\mdptuple$ be an MDP with the rewards $r$ as yet unspecified.
For every monotone decreasing (w.r.t.\ set inclusion) sequence of
sets of transitions $\{A_i\}_{i \in \N}$
there exists a reward function $r$ such that  
$\bigcap_{i \in \N} \G \F \, A_i = \limsupppobj$ in
$\mdp$.
\end{lemma}
\begin{proof}
We define the transition based reward function $r$ such that 
\begin{equation*}
r(t) \eqdef 
\begin{cases}
      0 	   & \text{if } \forall i\ t \in A_i \\
      -2^{-i} & \text{if } i= \max \{ i \in \N \mid \ t \in A_i \} \\
      -1        & \text{ otherwise}
\end{cases}
\end{equation*}
Now we show that $\bigcap_{i \in \N} \G \F \, A_i = \limsupppobj$ in $\mdp$.
\begin{enumerate}
\item
  Consider a run $\rho \in \bigcap_{i \in \N} \G \F \, A_i$.
  By definition, $\rho$ always eventually visits transitions in $A_i$
  for every $i$.
  By construction of $r$, $\rho$ therefore always eventually visits
  transitions with
  reward $-2^{-i}$ for every $i$ and thus $\rho \in \limsupppobj$.
\item
  Consider a run $\rho \in \limsupppobj$.
  By construction of $r$, this means that the $\limsup$ of the seen rewards
  must be $0$ (since $r$ never assigns rewards $>0$). 
  There are two cases.
  In the first case, $\rho$ visits infinitely many transitions with reward $0$,
  each of which is trivially in $A_i$ for all $i$, and thus $\rho \in \bigcap_{i \in \N} \G \F \, A_i$.
  In the second case, the sequence of rewards seen by $\rho$ gets arbitrarily close to $0$ from below.
  Therefore $\rho$ must visit infinitely many transitions $t \in A_i$ for
every $i$, and thus also $\rho \in \bigcap_{i \in \N} \G \F \, A_i$. \qedhere
\end{enumerate}
\end{proof}

\begin{lemma}\label{lem:limsupisnestedbuchi}
Let $\mdp=\mdptuple$ be a countable MDP.
There exists a
monotone decreasing (w.r.t.\ set inclusion) sequence of
sets of transitions $\{A_i\}_{i \in \N}$
such that 
$\limsupppobj = \bigcap_{i \in \N} \G \F \, A_i$ in $\mdp$.
\end{lemma}
\begin{proof}
Let $A_i \eqdef \{ t \in \transition \mid \ r(t) \geq -2^{-i} \}$.
The rest of the proof is very similar to \Cref{lem:nestedbuchiislimsup}.
\end{proof}

Symmetrically to the situation for $\bigcap_{i \in \N} \G \F \, A_i$ and $\limsupppobj$, the objectives
$\bigcap_{i \in \N} \F \G \, A_i$ and $\liminfppobj$ can also be mutually encoded.

\smallskip
\begin{lemma}\label{lem:nestedcobuchiisliminf}
Let $\mdp=\mdptuple$ be an MDP with the rewards $r$ as yet unspecified.
For every monotone decreasing (w.r.t.\ set inclusion) sequence of
sets of transitions $\{A_i\}_{i \in \N}$
there exists a reward function $r$ such that  
$\bigcap_{i \in \N} \F \G \, A_i = \liminfppobj$
in $\mdp$.
\end{lemma}
\begin{proof}
Let
\begin{equation*}
r(t) \eqdef 
\begin{cases}
      0 	   & \text{if } \forall i\ t \in A_i \\
      -2^{-i} & \text{if } i= \max \{ i \in \N \mid \ t \in A_i \} \\
      -1        & \text{ otherwise}
\end{cases}
\end{equation*}  
The rest of the proof is very similar to \Cref{lem:nestedbuchiislimsup}.
\end{proof}

\begin{lemma}\label{lem:liminfisnestedcobuchi}
Let $\mdp\mdptuple$ be a countable MDP.
There exists a
monotone decreasing (w.r.t.\ set inclusion) sequence of
sets of transitions $\{A_i\}_{i \in \N}$
such that 
$\liminfppobj = \bigcap_{i \in \N} \F \G \, A_i$ in $\mdp$.
\end{lemma}
\begin{proof}
Let $A_i \eqdef \{ t \in \transition \mid \ r(t) \geq -2^{-i} \}$.
The rest of the proof is very similar to \Cref{lem:nestedbuchiislimsup}.
\end{proof}

Built into the above objectives 1-4 is a certain notion of progress.
E.g., the sequence $-1/2, -1/3, -1/4 \dots$ satisfies $\limsup \ge 0$ and $\liminf \ge 0$.
In order to succeed, a strategy must strive to see better and better sets
$A_i$ (i.e., for larger and larger $i$ and thus larger and larger transition rewards), 
leaving behind those sets (or rewards) that are no longer good enough. 
In order to make this progress happen, there must be some sort of driving
force behind the strategy to make it play better and better as time
progresses.
This can come either
in the form of the MDP's underlying acyclicity or universal transience,
or via the strategy's step counter, 
or we can create it ourselves by suitably modifying the MDP or 
by exploiting the finite branching degree of the MDP in order to define a
function that measures the distance from the start state.
Many of the strategies defined in the following sections exploit this
intuition.

On the other hand, the objectives become a bit simpler if the transition rewards are restricted to the integers.
In that case, the objective $\limsupppobj$ just corresponds to the objective to see
transitions with reward $\ge 0$ infinitely often, i.e.,
$\always\eventually\, A$ where $A = \{ t \in \transition \mid \ r(t) \ge 0 \}$.
The latter objective $\always\eventually\, A$ is also called a B\"uchi objective
\cite{KMST:ICALP2019,KMST2020c,ModCheckHB18,CGP:book}.
Similarly, for integer rewards, $\liminfppobj$ corresponds to the co-B\"uchi
objective $\eventually\always\, A$.


\paragraph{Relation to the expected payoff.}

We now show how the (strategy complexity of)
expected payoff objectives $\limsupppexp$ and $\liminfppexp$ relate to the threshold
objectives.

The expected payoff objectives are more natural to define in the context of state based rewards. This is because 
we want to be able to refer to the value of a state and compare that value to
the reward of that state itself. This is less important for the threshold
objectives, since the rewards and the values are measured on two different
scales as it were.
It is trivial that transition based rewards and state based
rewards can be encoded into each other; cf.~\Cref{app:states-vs-transitions}
for a formal treatment.

The following \Cref{thm:explimsuptothreshold} shows that the strategy
complexity of optimal strategies for the \emph{expected} payoff objectives $\limsupppexp$ (resp.\ $\liminfppexp$)
in countable MDPs is upper-bounded by the strategy complexity of
optimal strategies for the \emph{threshold} payoff objectives $\limsupppobj$ (resp.\ $\liminfppobj$).
Following \cite{Sudderth:2020}[Sections 4 and 5], the main idea is that for
each MDP where
optimal strategies for $\limsupppexp$ (resp.\ $\liminfppexp$) exist, one can construct a
derived MDP with a $\limsupppobj$ (resp.\ $\liminfppobj$) objective,
and then carry optimal strategies from the derived MDP back to the original MDP.

\Cref{thm:explimsuptothreshold} will allow us to obtain 
upper bounds on the strategy complexity of optimal strategies for 
$\limsupppexp$ (resp.\ $\liminfppexp$) objectives
for free from the results on the strategy complexity of optimal
$\limsupppobj$ and $\liminfppobj$ strategies; cf.~\Cref{sec:expected}.

First we need to recall the notion of \emph{equalizing strategies}.
There are many equivalent ways of defining equalizing strategies.
Below we choose the one most appropriate for our application.
See \cite{MaitraSudderth:DiscreteGambling}[Theorem 4.7.7] for
more characterizations of equalizing strategies.
In this context we consider $\limsup$ (resp.\ $\liminf$) objectives
about the infinite sequence of daily payoffs, where the daily payoffs
here are given as state rewards (rather than transition rewards).
Intuitively, a strategy is equalizing if the $\limsup$ (resp.\ $\liminf$)
of the \emph{difference} between the value of the current state
and the reward of the current state converges to zero almost surely. 

\smallskip
\begin{definition}[see {\cite{DubbinsSavage:2014}[Theorem 3.7.2]}, {\cite{sudderth1983gambling}[Lemma 5]}]
\label{def:equalizing}
Let $\mdp = \mdptupler$ be a countable MDP with initial state $\state_0$.
Define sets
$A_i \eqdef \{ s \in S \mid r(s) \geq \valueof{\mdp,\limsupppexp}{s} - 2^{-i} \}$
(resp.\ $A_i \eqdef \{ s \in S \mid r(s) \geq \valueof{\mdp,\liminfppexp}{s} - 2^{-i} \}$)
for all $i \in \N$.  
A strategy $\zstrat$ from $\state_0$ is \emph{equalizing}
for $\limsupppexp$ (resp.\ $\liminfppexp$)
if and only if
\[
\probm_{\mdp,\state_0,\zstrat}\left(\bigcap_{i \in \N}\G \F \, A_i\right) = 1
\quad\quad
\left(
\mbox{resp.\ }
\probm_{\mdp,\state_0,\zstrat}\left(\bigcap_{i \in \N}\F \G \, A_i\right) = 1
\right).  
\]
\end{definition}

\begin{theorem}\label{thm:explimsuptothreshold}
In countable MDPs $\mdp = \mdptupler$, the strategy complexity of
optimal strategies for $\limsupppexp$ (resp.\ $\liminfppexp$),
where they exist, is upper-bounded by 
the strategy complexity of optimal strategies for
$\limsupppobj$ (resp.\ $\liminfppobj$).
\end{theorem}
\begin{proof}
For clarity, we present the proof for $\limsupppexp$ here.
(The corresponding proof for $\liminfppexp$ is very similar; see below).  

Let $\mdp = (\states,\zstates,\rstates,\transition,\probp,r)$
be a countable MDP with bounded state based rewards where optimal strategies
from $\state_0$ exist for $\limsupppexp$.
Let $\zstrat$ be an optimal strategy for $\limsupppexp$ from $\state_0$.
Let $\mdp^r$ be the sub-MDP of $\mdp$ composed only of those states and transitions used by $\sigma$ with positive probability.
In particular, $\zstrat$ is optimal for $\limsupppexp$ from $\state_0$ also in $\mdp^r$.

Since $\zstrat$ is optimal, all controlled transitions $s \to s'$ in $\mdp^r$
are such that $\valueof{\mdp^r, \limsupppexp}{s} = \valueof{\mdp^r, \limsupppexp}{s'}$.
Hence, all strategies in $\mdp^r$ are value preserving
(also called \emph{thrifty} in \cite{DubbinsSavage:2014,Sudderth:2020}). 
Let $A_i \eqdef \{ s \in S \mid r(s) \geq \valueof{\mdp^r,\limsupppexp}{s} -
2^{-i} \}$ for all $i \in \N$.
By \Cref{def:equalizing}, 
$\zstrat$ is equalizing if and only if 
$\probm_{\mdp^r,\state_0,\zstrat}(\bigcap_{i \in \N}\G \F \, A_i) = 1$.
By \cite{MaitraSudderth:DiscreteGambling}[Theorem 4.7.2],
strategies are optimal if and only if they are equalizing and value preserving
(aka thrifty).
I.e., strategies are optimal for 
$\limsupppexp$ in $\mdp^r$ if and only if they are optimal for 
$\bigcap_{i \in \N} \G \F\, A_i$.

We now define a new state based reward function $u$ as follows:
\[
u(s) \eqdef 
\begin{cases}
-1 & \text{if } \forall i \in \N, s \notin A_i, \\
0  & \text{if } \forall i \in \N, s \in A_i , \\
-2^{- \max \{ i \in \N \,\mid\, s \in A_i \} } & \text{otherwise}				
\end{cases} 
\]
Now we let $\mdp^u$ be the MDP that is like $\mdp^r$ except that it uses
reward function $u$ instead of reward function $r$.
So $\mdp^u$ is a sub-MDP of $\mdp$, but with a different reward function.
We claim that a strategy $\tau$ from $\state_0$
is optimal for $\limsupppexp$ in $\mdp^r$ if and only if it is optimal for
$\limsupppobj$ in $\mdp^u$.
(In particular this claim implies that optimal strategies for $\limsupppobj$ from
$\state_0$ exist in $\mdp^u$.)
Notice that strategies defined on $\mdp^r$ are also defined on $\mdp^u$ and
vice-versa, since the only difference between the two MDPs is the reward function. 
The claim follows immediately from the observation that runs satisfy
$\limsupppobj$ in $\mdp^u$ if and only if they satisfy $\bigcap_{i \in \N} \G \F\, A_i$.
(This is the state based rewards version of \Cref{lem:nestedbuchiislimsup}.)
Hence a strategy $\tau$ from $\state_0$ is optimal for $\limsupppobj$ in $\mdp^u$
if and only if it is optimal for $\bigcap_{i \in \N} \G \F\, A_i$ in $\mdp^u$
if and only if it is optimal for $\limsupppexp$ in $\mdp^r$
if and only if it is optimal for $\limsupppexp$ in $\mdp$.

Thus, we have reduced the problem of finding optimal strategies for
$\limsupppexp$ to the problem of finding optimal strategies for $\limsupppobj$,
and the upper bound on the strategy complexity follows.

To obtain the corresponding proof for $\liminfppexp$, simply replace $\limsup$ with $\liminf$ and 
$\bigcap_{i \in \N}\G \F \, A_i$ with $\bigcap_{i \in \N} \F \G \, A_i$
and \cite{MaitraSudderth:DiscreteGambling}[Theorem 4.7.2] with \cite{sudderth1983gambling}[Lemma 5].
\end{proof}

\begin{figure}
\begin{center}
    \begin{tikzpicture}
    
    \node[draw] (S1) at (0,0){};
    \node[draw] (S2) at (2,0){};
    \node[draw] (S3) at (4,0){};
    \node[draw] (S4) at (6,0){};
    
    \node[draw] (T1) at (0,-1){};
    \node[draw] (T2) at (2,-1){};
    \node[draw] (T3) at (4,-1){};
    \node[draw] (T4) at (6,-1){};
    
    \node (I1) at (7,0){};
    
    \draw[->,>=latex] (S1) edge node[above, midway]{$+0$}  (S2)
    (S2) edge node[above, midway]{$+0$}  (S3)
    (S1) edge  (T1)
    (S2) edge  (T2)
    (S3) edge  (T3)
    (S4) edge  (T4)
    (T1) edge[loop below] node[below, midway]{$+1-2^{-1}$}  (T1)
    (T2) edge[loop below] node[below, midway]{$+1-2^{-2}$}  (T2)
    (T3) edge[loop below] node[below, midway]{$+1-2^{-3}$}  (T3)
    (T4) edge[loop below] node[below, midway]{$+1-2^{-i}$}  (T4);
    
    \draw[->,>=latex,dotted,thick] (S3) edge (S4)
    (S4) edge (I1);

    \end{tikzpicture}
\end{center}  

\caption{
All strategies are optimal for $\liminfppobj$ and $\limsupppobj$, yet there
are no optimal
strategies for $\liminfppexp$ or $\limsupppexp$.
Every threshold $< 1$ has an optimal strategy, yet thresholds $\geq 1$ have no optimal strategies.
}
\label{incomparable}
\end{figure}

\begin{remark}
The reduction in \Cref{thm:explimsuptothreshold} does not work in the reverse direction.
I.e., we cannot reduce the problem of finding optimal strategies for
$\limsupppobj$ (resp.\ $\liminfppobj$) to the problem of finding optimal
strategies for  
$\limsupppexp$ (resp.\ $\liminfppexp$).
\Cref{incomparable} gives an example of an MDP where optimal strategies exist
for
$\limsupppobj$ (resp.\ $\liminfppobj$),
but do not exist for $\limsupppexp$ (resp.\ $\liminfppexp$).
\end{remark}

\section{Strategy complexity of \texorpdfstring{$\bigcap_{i \in \N} \G \F A_i$
    and $\limsupppobj$}{lim sup}}\label{sec:nestedbuchi}

In light of \Cref{lem:nestedbuchiislimsup} and \Cref{lem:limsupisnestedbuchi}, we present strategy complexity
results on $\bigcap_{i \in \N} \G \F A_i$ and $\limsupppobj$ together and interchangeably. 

First we show that, for these objectives, the branching degree of the MDP does
not matter (unlike for $\liminfppobj$; cf.~\Cref{sec:nestedcobuchi}).
Note that positional strategies are a special case of finite-memory
strategies, i.e., with only one memory mode. Thus the following lemma also allows to
carry positional strategies between MDPs.

\smallskip
\begin{lemma}\label{lem:branchingreplacement}
Consider an infinitely branching MDP $\mdp=\mdptuple$ and the $\limsupppobj$
objective. There exists a corresponding binary branching MDP
$\mdp' = \tuple{\states',\zstates',\rstates',\transition',\probp',r'}$
with $\states\subseteq \states'$, $\zstates\subseteq \zstates'$,
$\rstates\subseteq \rstates'$
such that 
\begin{enumerate}
\item
  $\mdp'$ is acyclic (resp.\ universally transient) iff
  $\mdp$ is acyclic (resp.\ universally transient).
\item
The value of states is preserved, i.e.,
\[
\forall\state \in \states\ 
\valueof{\mdp,\limsupppobj}{\state}=\valueof{\mdp',\limsupppobj}{\state}.
\]
\item
Finite memory randomized (resp.\ deterministic) strategies $\zstrat'$ in $\mdp'$
can be carried back to $\mdp$. I.e., for
every Rand(F) (resp.\ Det(F)) strategy
$\zstrat'$ from some $\state_0 \in \states$ in $\mdp'$ there exists a
corresponding strategy $\zstrat$ from $\state_0$ in $\mdp$ with the same memory and
randomization such that
\[
\probm_{\mdp, \state_0, \zstrat}(\limsupppobj) \ge
\probm_{\mdp', \state_0, \zstrat'}(\limsupppobj).
\]
\end{enumerate}
\end{lemma}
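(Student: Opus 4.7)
The plan is to build $\mdp'$ by replacing each infinitely branching state $s$ of $\mdp$ with a fresh binary gadget $G_s$. For a random $s$ with successor distribution $\probp(s)(s_i)=p_i$, I take $G_s$ to be a right-linear random chain $v_1^s \to v_2^s \to \cdots$ in which $v_i^s$ transitions to $s_i$ with conditional probability $p_i/(1-\sum_{j<i}p_j)$ and to $v_{i+1}^s$ otherwise. For a controlled $s$, $G_s$ is the analogous chain with each $v_i^s$ controlled, offering a binary choice between committing to $s_i$ or continuing to $v_{i+1}^s$. Each ``commit'' transition $v_i^s \to s_i$ carries the original reward $r(s\transition s_i)$, and both the entry transition $s \to v_1^s$ and every continuation $v_i^s \to v_{i+1}^s$ are assigned the uniform reward $-1$. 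All finitely-branching states of $\mdp$ are copied verbatim.

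Claim (1) is immediate: each gadget is itself an acyclic chain traversed at most once per visit to $s$, so acyclicity and universal transience of $\mdp'$ coincide with those of $\mdp$. For claim (2), the key observation is that the uniformly negative intermediate reward is benign: a run in $\mdp'$ has reward sequence equal to that of its $\mdp$-projection interleaved with $-1$'s, so its new $\limsup$ is $\ge 0$ iff its old one is; moreover, a run trapped forever in some $G_s$ has $\limsup = -1 < 0$ and is automatically losing. Strategies in $\mdp$ lift to $\mdp'$ by navigating each $G_s$ to the intended successor, and the converse projection (described next) gives the matching inequality, so $\valueof{\mdp,\limsupppobj}{\state}=\valueof{\mdp',\limsupppobj}{\state}$ for all $\state \in \states$.

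For claim (3), given a Rand(F) (resp.\ Det(F)) strategy $\zstrat'$ in $\mdp'$ with memory $\memory$, I would define $\zstrat$ at state $s$ with memory $m$ by simulating $\zstrat'$ inside $G_s$ starting with memory $m$. The simulation is a (possibly randomized) process on $G_s \times \memory$ driven by $\zstrat'$'s memory-update and action rules, and since $G_s$ is a right-linear chain it either commits at some $v_i^s$ to a leaf $s_i$ with final memory $m'$, or else continues forever. In the committing case, $\zstrat$ outputs $s_i$ and updates its memory to $m'$; in the non-committing case, $\zstrat$ chooses arbitrarily. The resulting $\zstrat$ is a (jointly sampled) distribution over $(s_i,m')$ given $(s,m)$, so it uses exactly memory $\memory$, and is deterministic whenever $\zstrat'$ is (randomness inside a random gadget comes from $\mdp$'s own $\probp$, not the strategy). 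A coupling on committing traversals then yields $\probm_{\mdp,s_0,\zstrat}(\limsupppobj) \ge \probm_{\mdp',s_0,\zstrat'}(\limsupppobj)$.

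The main obstacle will be the controlled-gadget case where $\zstrat'$ may traverse $G_s$ forever without committing, which would leave the projected strategy in $\mdp$ undefined. The whole purpose of choosing the intermediate reward $-1 < 0$ is to neutralise this pathology: any such non-terminating traversal is a losing run in $\mdp'$ (with $\limsup = -1$) and so need not be matched by $\zstrat$. A minor secondary subtlety is that $\zstrat$'s action choice and memory update are correlated through their shared simulated trajectory, but this is permitted within the standard (randomized) finite-memory strategy framework; the positional case is recovered by instantiating $|\memory|=1$.
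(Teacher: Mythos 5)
Your construction and argument coincide with the paper's proof in all essentials: the same chain/ladder gadgets with reward $-1$ on auxiliary transitions and the original rewards (with suitably renormalized probabilities) on the exit transitions, the same observation that runs trapped forever in a gadget have $\limsup = -1 < 0$ and are losing so the non-committing probability mass can be reassigned arbitrarily, and the same conditional coupling of exit state and exit memory to project a Rand(F) (resp.\ Det(F)) strategy back to $\mdp$ with unchanged memory. The only omission is that you copy finitely branching states verbatim, so your $\mdp'$ is finitely branching but not necessarily \emph{binary} branching as the lemma asserts; the paper closes this by also replacing each finite non-binary branching with a binary tree whose auxiliary transitions carry reward $-1$, a step your gadget idea handles identically (and trivially, since such gadgets are surely exited).
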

\begin{proof}
We construct $\mdp'$ from $\mdp$ by replacing every infinitely branching controlled state
\begin{center}
    \begin{tikzpicture}
    
    \node[draw, , minimum height=5mm, minimum width=5mm] (S1) at (0,0){};
    
	\node (I1) at (3.5,-0.75) {\textbf{$\cdots$}};    
	\node (I2) at (5.5,-1.5) {\textbf{$\cdots$}};    
    
    \node[draw, circle, minimum height=5mm] (T1) at (0,-1.5){};
    \node[draw, circle, minimum height=5mm] (T2) at (1.5,-1.5){};
    \node[draw, circle, minimum height=5mm] (T3) at (3,-1.5){};
    \node[draw, circle, minimum height=5mm] (T4) at (4.5,-1.5){};
      
     \draw[->,>=latex] (S1) edge node[left, midway]{$r_1$}  (T1)
     (S1) edge node[left, midway]{$r_2$} (T2)
     (S1) edge node[left, midway, yshift=-1pt]{$r_3$} (T3)
     (S1) edge node[above, midway]{$r_4$} (T4);

    \end{tikzpicture}
\end{center}    

by

\begin{center}
    \begin{tikzpicture}
    
    \node[draw, , minimum height=5mm, minimum width=5mm] (S1) at (0,0){};
    \node[draw, , minimum height=5mm, minimum width=5mm] (S2) at (1.5,0){};
    \node[draw, , minimum height=5mm, minimum width=5mm] (S3) at (3,0){};
    \node[draw, , minimum height=5mm, minimum width=5mm] (S4) at (4.5,0){};
    
	\node (I1) at (5.5,0) {\textbf{$\cdots$}};    
	\node (I2) at (5.5,-1.5) {\textbf{$\cdots$}};    
    
    \node[draw, circle, minimum height=5mm] (T1) at (0,-1.5){};
    \node[draw, circle, minimum height=5mm] (T2) at (1.5,-1.5){};
    \node[draw, circle, minimum height=5mm] (T3) at (3,-1.5){};
    \node[draw, circle, minimum height=5mm] (T4) at (4.5,-1.5){};
      
     \draw[->,>=latex] (S1) edge node[left, midway]{$r_1$} (T1)
     (S2) edge node[left, midway]{$r_2$} (T2)
     (S3) edge node[left, midway]{$r_3$} (T3)
     (S4) edge node[left, midway]{$r_4$}(T4)
     (S1) edge node[above, midway]{$-1$} (S2)
     (S2) edge node[above, midway]{$-1$} (S3)
     (S3) edge node[above, midway]{$-1$} (S4);

    \end{tikzpicture}
\end{center}    
The above construction is also called the \emph{ladder gadget}.
It includes negative rewards $-1$ on the ladder
transitions (top row) to ensure that strategies must leave the ladder
eventually (since runs that stay on the ladder do not satisfy $\limsupppobj$).

Infinitely branching random states with $x \step{p_i}{} y_i$ for all $i \in \N$
are replaced by a gadget
$x \step{1}{} z_1, z_i \step{1-p_i'} z_{i+1}, z_i \step{p_i'} y_i$ for all
$i \in \N$, with fresh random states $z_i$ and suitably adjusted probabilities
$p_i'$ to ensure that the gadget
is left at state $y_i$ with probability $p_i$, i.e.,
$p_i' = p_i/(\prod_{j=1}^{i-1}(1-p_j'))$.
Almost all the new auxiliary transitions have reward $-1$, except for the last
step to $y_i$, which is given the same reward as the original transition 
$x \step{p_i}{} y_i$.

Finally, each non-binary finite branching is replaced by a binary tree
where the new auxiliary transitions have reward $-1$
and the last step has the original reward.

Thus $\mdp'$ has branching degree $\le 2$.

Item 1.\ follows from the fact that the
auxiliary gadgets in $\mdp'$ are acyclic.

Towards item 2., note that for every strategy in $\mdp'$
from some state $\state \in \states$
there is a corresponding
strategy in $\mdp$ from $\state$ that attains \emph{at least} as much for $\limsupppobj$, and
vice-versa. 
This is because runs that stay on the ladder gadget forever do not satisfy
$\limsupppobj$
and thus the new additions in $\mdp'$ confer no advantage.
So the corresponding strategy just imitates the outcome of the branching in
the other MDP.
Thus
\[
\forall\state \in \states\  \valueof{\mdp,\limsupppobj}{\state}=\valueof{\mdp',\limsupppobj}{\state}.
\]
Towards item 3., we show that finite memory randomized (resp.\ deterministic) strategies can be carried
from $\mdp'$ to $\mdp$ with the same memory and randomization.

Let $\state_0 \in \states$ be the initial state and let $\memory$ be the finite set of memory modes used by $\zstrat'$.
Consider some state $x$ in $\mdp$ that is infinitely branching to states $y_i$ for
$i \in \N$, and its associated ladder gadget in $\mdp'$.
Whenever a run in $\mdp'$ according to $\zstrat'$ reaches $x$ in some
memory mode $\alpha \in \memory$ there exists
a combined probability distribution $d$ over the set of outcomes.
I.e., $d(y_i,\alpha_j)$ is the probability that we exit the gadget at state
$y_i$ in memory mode $\alpha_j$.
It is also possible that we do not exit the gadget at all, but in this case we
do not care about the memory modes, since this run will not satisfy
$\limsupppobj$ anyway. Let $p \eqdef 1-\sum_{i,j} d(y_i,\alpha_j)$ be the
probability of not exiting the gadget.

We now define the corresponding strategy $\zstrat$ in $\mdp$ with the same
memory as $\zstrat'$.
Whenever $\zstrat$ is in state
$x$ in memory mode $\alpha \in \memory$ then it plays as follows.

If $x$ is a random state then $p=0$, i.e., we will exit the gadget almost
surely. Moreover, $\zstrat$ does not select the next state, but can only update
its memory. By construction of $\mdp'$ we will go to state $y_i$ in $\mdp$
with the same probability as in $\mdp'$. Upon arriving at some state $y_i$,
$\zstrat$ will update its memory to match the distribution $d$, i.e.,
it will set its memory to mode $\alpha_j$ with probability
$d(y_i,\alpha_j)/(\sum_j d(y_i,\alpha_j))$.

If $x$ is a controlled state then $\zstrat$ selects the successor state $y_i$
and new memory mode $\alpha_j$ as follows.
It selects $y_1$ and $\alpha_1$ with probability $d(y_1,\alpha_1)+p$
(special case),
and otherwise selects $y_i$ and $\alpha_j$ with probability $d(y_i,\alpha_j)$
(normal case).
The special case is needed to obtain a distribution in case $p>0$.

The construction for states that are finitely, but non-binary, branching
in $\mdp$ is similar but easier (since the gadget will surely be exited).

In all other states, $\zstrat$ does the same in $\mdp$ as $\zstrat'$ in
$\mdp'$.

We observe that $\zstrat$ is deterministic iff $\zstrat'$ is deterministic.

Finally, we observe that $\zstrat$ attains at least as much for the objective $\limsupppobj$
in $\mdp$ as $\zstrat'$ in $\mdp'$.
The additionally seen rewards of $-1$ in the gadgets in $\mdp'$ make no difference for
$\limsupppobj$ if the gadget is exited.
The runs that stay forever in some gadget in $\mdp'$ do not satisfy
$\limsupppobj$ anyway, and thus the different extra corresponding runs in $\mdp$
(by the extra probability $+p$) via 
the special case definition above cannot do worse.
All other runs in $\mdp'$ are mimicked by runs in $\mdp$
(w.r.t.\ seen states and memory modes) and the probabilities coincide.
Thus $
\probm_{\mdp, \state_0, \zstrat}(\limsupppobj) \ge
\probm_{\mdp', \state_0, \zstrat'}(\limsupppobj).
$
\end{proof}

Note that one cannot encode infinitely branching MDPs into finitely
branching MDPs in general.
E.g., even for $\liminfppobj$ it \emph{does} make a difference whether the MDP is
finitely branching or infinitely branching; cf.~\Cref{sec:nestedcobuchi}
and \Cref{tab:liminf}.

\bigskip
\begin{remark}[Integer rewards vs.\ real rewards]\label{rem:integer-rewards}
If the transition rewards are restricted to the integers, then 
the objective $\limsupppobj$ just corresponds to the objective to visit
transitions with reward $\ge 0$ infinitely often, i.e.,
$\always\eventually\, A$ where $A \eqdef \{ t \in \transition \mid \ r(t) \ge 0 \}$.
Thus, instead of $\bigcap_{i \in \N} \G \F \, A_i$, it suffices to consider the
simpler B\"uchi objective $\always\eventually\, A$.
The strategy complexity of the B\"uchi objective in countable MDPs has been
studied in \cite{KMSW2017,KMST:ICALP2019,KMST2020c}.

Optimal strategies for B\"uchi, where they exist, can be chosen as
Det(Positional), but this does not carry over to $\bigcap_{i \in \N} \G \F \, A_i$,
where even Det(F) is not sufficient (\Cref{thm:limsuplowerdetf}),
and instead optimal strategies can be chosen as Det(SC) or Rand(Positional); cf.~\Cref{tab:limsup}.

$\eps$-optimal strategies for B\"uchi can be chosen as Det(SC + 1-bit),
but not as Rand(SC) or Rand(F).
The lower bounds trivially carry over to $\bigcap_{i \in \N} \G \F \, A_i$.
The Det(SC + 1-bit) upper bound also carries over, but it requires a more complex
proof (\Cref{cor:epsGF}).
\end{remark}


\subsection{Upper Bounds}

We choose to present the upper bounds in terms of
$\bigcap_{i \in \N} \G \F \, A_i$,
because it is more natural.
First we present the results for optimal strategies,
and then those for $\eps$-optimal strategies.

\subsubsection{Strategy Complexity of Optimal Strategies}

\begin{theorem}\label{thm:optnestedbuchiunivtrans}
  Let $\mdp=\mdptuple$ be a
  universally transient countable MDP
  with initial state $\state_{0}$ with a 
  $\bigcap_{i \in \N} \G \F \, A_i$ objective.
  If there exists an almost surely winning strategy from $s_0$,
  then there also exists an almost surely winning deterministic positional strategy.
\end{theorem}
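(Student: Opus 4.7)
The plan is to apply \Cref{as-to-opt} to the shift invariant objective
$\phi := \bigcap_{i \in \N} \G \F A_i$, so that it suffices to prove:
in any universally transient MDP, every state admitting an almost surely
winning strategy for $\phi$ also admits a deterministic positional one.
Restricting to the a.s.\ winning region $W$ yields a sub-MDP
$\mdp' := \mdp|_W$ that inherits universal transience (as a sub-MDP of a
universally transient MDP) and in which every state has value $1$ for
$\phi$, and \emph{a fortiori} value~$1$ for each individual reachability
objective $\F A_i$. Note that random states of $W$ must have all their
successors in $W$, so this restriction is well-defined.

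A first useful reduction turns the infinite conjunction of B\"uchi
objectives into an infinite conjunction of reachability objectives: if
$\tau$ is a deterministic positional strategy on $\mdp'$ that almost
surely reaches $A_i$ from every state for every~$i$, then $\tau$ also
almost surely satisfies $\G \F A_i$ from every state. Indeed, since
$\tau$ is positional, its continuation after the first visit to $A_i$
is again $\tau$ itself and so almost surely reaches $A_i$ anew,
yielding iteratively infinitely many visits. Hence $\tau$ almost surely
satisfies $\phi$ from every state.

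It therefore suffices to construct a single deterministic positional
strategy $\tau$ on $\mdp'$ that almost surely reaches $A_i$ from every
state, for every $i \in \N$. For each $i$, standard results on almost
sure reachability in universally transient MDPs (see
\cite{KMST:Transient-arxiv}) combined with \Cref{lem:mr-as-uniform}
applied to the shift invariant objective $\F A_i$ yield a uniform
deterministic positional strategy $\rho_i$ on $\mdp'$ that almost
surely reaches $A_i$ from every state. Define the depth function
$d(s) := \sup\{i \in \N : s \in A_i\} \in \N \cup \{\infty\}$, and set
$\tau(s) := \rho_{d(s)+1}(s)$ (with an arbitrary fixed choice when
$d(s) = \infty$). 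Intuitively, $\tau$ always aims to descend one level
deeper than the current state.

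The main obstacle is verifying that this depth-adaptive $\tau$ does
indeed almost surely reach every $A_i$ from every state, despite
switching its governing MD strategy whenever the run crosses a depth
boundary. The proof proceeds by induction on~$i$. For $i=1$, runs
under $\tau$ from a state of depth~$0$ behave identically to runs under
$\rho_1$ until they first enter $A_1$, giving the base case. The
inductive step requires ruling out the possibility that the run
oscillates forever among depths strictly less than~$i+1$ without ever
descending to $A_{i+1}$. Here universal transience is the essential
ingredient, since it prevents any finite set of states from being
revisited infinitely often, which combined with the fact that every
state of $\mdp'$ admits an a.s.\ winning strategy for $\phi$ forces the
run almost surely to descend to arbitrarily deep levels.
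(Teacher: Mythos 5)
There is a genuine gap, and it sits exactly where the paper's proof does its real work: how to glue the individual reachability strategies into one deterministic positional strategy without destroying their guarantees. The paper fixes positional choices on growing \emph{finite} bubbles $\bubble{n_i}{s_0}$ so that, once fixed, \emph{under every continuation} a run entering the $i$-th annulus traverses a transition of $A_i$ inside that annulus with probability at least $1/2$ within a bounded horizon; universal transience forces every run through every annulus, so the failure probability is bounded by $\prod_j 1/2 = 0$. Your depth-adaptive patching $\tau(s) := \rho_{d(s)+1}(s)$ has no such quantitative, switch-proof guarantee, and it can fail outright. Concretely (setting aside that the $A_i$ are sets of \emph{transitions}, so ``$s \in A_i$'' is ill-typed and $d$ must be read as, say, the largest $i$ with an outgoing $A_i$-transition): take controlled states $x_0,y_0,x_1,y_1,\dots$ where $y_n$ has one transition in $A_1 \setminus A_2$ and one plain transition to $x_{n+1}$, while $x_n$ has a plain transition to $y_n$ and a plain transition to a state $z_n$ whose outgoing transition lies in $A_2$ and leads into a spine whose $k$-th transition lies in $A_k$. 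This MDP is acyclic, hence universally transient, and every state is surely winning for $\phi$. Legitimate uniform MD a.s.\ reachability strategies are: $\rho_1$ plays $x_n \to y_n \to A_1$, and $\rho_2$ plays $y_n \to x_{n+1} \to z_{n+1} \to A_2$. Then $\tau$ plays $\rho_1$ at the depth-$0$ states $x_n$ and $\rho_2$ at the depth-$1$ states $y_n$, so its unique run is $x_0 y_0 x_1 y_1 \cdots$: transient, yet it never traverses any $A_i$-transition, so $\probm_{\mdp,x_0,\tau}(\F A_1) = 0$ while the value is $1$. This also refutes your base case (runs under $\tau$ diverge from $\rho_1$ at the first depth-$\ge 1$ state, which can occur before $A_1$ is reached), and it shows the closing appeal is circular: universal transience only says runs leave every finite set, not that they ever traverse deep (or any) $A_i$-transitions, and the existence of a.s.\ winning strategies at every state constrains nothing about runs under $\tau$, which does not follow them.

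Two smaller points. First, $\F A_i$ is \emph{not} shift invariant (a run may satisfy it only because of its prefix), so \Cref{lem:mr-as-uniform} cannot be applied to it; the uniform deterministic positional a.s.\ winning reachability strategies you want do exist, but by the result of \cite{KMST2020c}, which is how the paper obtains them in the proof of \Cref{thm:nestedbuchimr}. Second, the opening invocation of \Cref{as-to-opt} is vacuous: the statement you ``reduce'' to is literally the theorem itself; \Cref{as-to-opt} is what the paper applies \emph{afterwards} (in \Cref{cor:GFsc}) to pass from almost surely winning to optimal strategies.
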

\begin{proof}[Proof outline]
  W.l.o.g.\ we can assume that $\mdp$ is finitely branching, and it
  suffices to consider a sub-MDP where every state admits an almost surely
  winning strategy.
  A run satisfies $\bigcap_{i \in \N} \G \F \, A_i$
  iff it visits infinitely many transitions in $A_i$ for every $i \in \N$.
  We partition the state space into infinitely many finite regions
  in the shape of expanding rings around the initial state $\state_0$,
  where membership in each ring
  is defined via certain lower and upper bounds on the length of the shortest path from
  $\state_0$.
  Since $\bigcap_{i \in \N} \G \F \, A_i \subseteq \eventually \, A_i$ for all $i$,
  every state has value one for the reachability objective $\eventually \, A_i$.
  Since the rings are chosen sufficiently large,
  we can fix a deterministic positional strategy inside the $i$-th ring
  that ensures that $A_i$ is visited with probability $\geq 1/2$ already inside the
  $i$-th ring whenever the $i$-th ring is entered from the previous $(i-1)$-th
  ring.
  Since our MDP is universally transient and every ring is finite, it follows
  that almost all runs (except for a nullset) eventually reach the $i$-th ring for every $i$.
  Hence, for every $i \in \N$ we obtain a $\ge 1/2$ chance of visiting $A_i$.
  Since the sets $A_i$ form a monotone decreasing chain, we
  satisfy $\bigcap_{i \in \N} \G \F \, A_i$ almost surely.
\end{proof}
  
\begin{proof}
W.l.o.g.\ we can assume that $\mdp$ is finitely branching by \Cref{lem:branchingreplacement}.
Assume furthermore that there exists an almost surely winning strategy
$\zstrat'$ from $\state_{0}$ in $\mdp$.
Let $\states' \subseteq \states$ be the subset of states that are visited
under $\zstrat'$. By restricting $\mdp$ to $\states'$, we obtain a sub-MDP
$\mdp'$ where all states are almost surely winning for $\bigcap_{i \in \N} \G \F \, A_i$.

By construction of $\mdp'$, we have
$\forall \state \in \states'\, \valueof{\mdp',\bigcap_{i \in \N} \G \F \, A_i}{\state} = 1$.
Note that $\G \F \, A_{i} \subseteq \F\, A_{i}$ for all $i$.
Hence we have that $\valueof{\mdp',\F\, A_{i}}{\state} \geq \valueof{\mdp',\bigcap_{i \in \N} \G \F \, A_i}{\state} = 1$ for all $s \in S'$ and all $i \in \N$.
Let $d : \states \to \N$ be a distance function where $d(\state)$ is the length of the shortest path from $\state_{0}$ to $\state$.
Let $\bubble{n}{\state_{0}} \eqdef \{\state \in \states'\mid d(\state) \leq n \}$.

We inductively construct
an increasing sequence of numbers $0 = n_0 < n_1 < n_2 < \dots$ and
a sequence of MDPs $\mdp_i$ for $i=0,1,2,\dots$ that are derived from $\mdp'$
by fixing choices in larger and larger finite subspaces $\bubble{n_i}{\state_{0}}$.
These will satisfy the following properties:
\begin{itemize}
\item
  For all $i \ge 0$, $\mdp_i$ is universally transient and
  all states in $\mdp_i$ are almost surely winning for $\bigcap_{i \in \N} \G \F \, A_i$.
\item
  For all $i,j \ge 1$ we have the following.
  
Let $\states_i \eqdef \bubble{n_i}{\state_{0}}$, 
$X_i \eqdef \states_i \setminus \states_{i-1}$ and
let
$A_i^j \eqdef \{(\state_1 \transition \state_2) \in A_i \mid \state_1,\state_2 \in X_j\}$
be the transitions in $A_i$ that happen completely inside the subspace $X_j$.

In $\mdp_{i+1}$, under all strategies, for all $j \le i$,
from any state $\state \in \states_j$ the probability of seeing a transition in
$A_{j+1}^{j+1}$ is $\ge 1/2$.
(This property will hold under all strategies, because in $\mdp_{i+1}$ the relevant
choices will be fixed already.)
Formally, for all $j \le i$ we have
\begin{equation}\label{eq:lem:ppalmostupper-1}
  \forall \tau\ \forall \state \in \states_j
  \, \probm_{\mdp_{i+1}, \state, \tau}(\F\, A_{j+1}^{j+1}) \ge 1/2
\end{equation}
\end{itemize}

In the base case $i=0$ we have $\mdp_0 \eqdef \mdp'$, $n_0 \eqdef 0$ and
$\states_0 = \{\state_0\}$.
The first property holds by construction of $\mdp_0 = \mdp'$ and the second is
vacuously true (since it assumes $i \ge 1$).

Induction step $i \transition i+1$:
We define $\mdp_{i+1}$ inductively as follows.

Consider the MDP $\mdp_i$.
Since $\mdp_i$ (like $\mdp$) is finitely branching, 
the set $\states_{i}$ is finite.
By induction hypothesis, all states in $\mdp_i$ are almost surely winning
for $\bigcap_{i \in \N} \G \F \, A_i$.
Moreover, since $\mdp_i$ (like $\mdp$) is universally transient,
under all strategies,
the set of runs which stays within a finite set forever is a nullset.
In particular this holds for the finite set $\states_i = \cup_{j \le i} X_j$. Therefore
$ \G \F \, A_{i+1}$
is equal to
$\G \F(A_{i+1} \setminus \cup_{j \le i} A_{i+1}^j)$
up to a nullset (by universal transience) and we have
$\G \F(A_{i+1} \setminus \cup_{j \le i} A_{i+1}^j)
\subseteq
\F (A_{i+1} \setminus \cup_{j \le i} A_{i+1}^j)
$.
Thus every state in $\mdp_i$ is also almost surely
winning for the reachability objective $\F(A_{i+1} \setminus \cup_{j \le i} A_{i+1}^j)$.

In countable MDPs there exist uniform $\eps$-optimal deterministic positional strategies for
reachability \cite{Ornstein:AMS1969}.
Since all states in $\mdp_i$ have value $1$ for the reachability objective
$\F(A_{i+1} \setminus \cup_{j \le i} A_{i+1}^j)$,
we can pick a uniform deterministic positional strategy $\zstrat_{i+1}$
that is $1/4$-optimal from every $\state \in \states_{i}$, i.e.,
\[
\forall \state \in \states_{i}\,
\probm_{\mdp_i,\state,\zstrat_{i+1}}(\F(A_{i+1} \setminus \cup_{j \le i} A_{i+1}^j))
\ge 1 - \dfrac{1}{4} = \dfrac{3}{4}.
\]
Since $\F = \bigcup_{k \in \N} \F^{\leq k}$,
by continuity of measures from below we have 
\[
\forall \state \in \states_{i}\, \lim_{k \to \infty}
\probm_{\mdp_i,\state,\zstrat_{i+1}}(\F^{\le k}(A_{i+1} \setminus \cup_{j \le i} A_{i+1}^j)) \ge \dfrac{3}{4}.
\]
Thus for all states $\state \in \states_{i}$ there is a $k(\state)$
s.t.\ $\probm_{\mdp_i,\state,\zstrat_{i+1}}(\F^{\le k(\state)}(A_{i+1} \setminus \cup_{j \le i} A_{i+1}^j)) \ge 1/2$.
Since $\states_i$ is finite,
$k_{i+1} \eqdef \max\{k(\state) \mid \state \in \states_i\}$ is finite.
Hence
\[
\forall \state \in \states_{i}\,
\probm_{\mdp_i,\state,\zstrat_{i+1}}(\F^{\le k_{i+1}}(A_{i+1} \setminus \cup_{j \le i} A_{i+1}^j)) \ge 1/2.
\]
Let $n_{i+1} \eqdef n_i + k_{i+1}$.
All transitions in $A_{i+1}\setminus \cup_{j \le i} A_{i+1}^j$ that are reachable from
$\states_i$ in $\le k_{i+1}$ steps are contained in $A_{i+1}^{i+1}$
by the definitions of $n_{i+1}$, $X_{i+1}$ and $A_{i+1}^{i+1}$.
Thus
\begin{equation}\label{eq:T-iplus-iplus}
\forall \state \in \states_{i}\,
\probm_{\mdp_i,\state,\zstrat_{i+1}}(\F\, A_{i+1}^{i+1}) \ge 1/2.
\end{equation}
We obtain $\mdp_{i+1}$ from $\mdp_i$ by fixing the (remaining) choices inside
$\states_{i+1} = \bubble{n_{i+1}}{s_{0}}$ according to $\zstrat_{i+1}$.
From \eqref{eq:T-iplus-iplus} and the definition of $\mdp_{i+1}$
we obtain \eqref{eq:lem:ppalmostupper-1} for the case of $j=i$.
The other cases of \eqref{eq:lem:ppalmostupper-1} where $j < i$ follow from the induction hypothesis.

$\mdp_{i+1}$ is universally transient, since $\mdp_i$ is universally transient.
Thus only a nullset of runs stays in the finite set $\states_{i+1}$ forever.
Thus, since $\bigcap_{i \in \N} \G \F \, A_i$ is shift invariant, even in $\mdp_{i+1}$ all
states are almost surely winning for $\bigcap_{i \in \N} \G \F \, A_i$.
This concludes the induction step.

Let $\zstrat^{*}$ be the deterministic positional strategy on $\mdp'$ that is compatible
with the MDPs $\mdp_i$ for all $i$.
I.e.\ $\zstrat^{*}$ plays like $\zstrat_{i}$ on $X_i$ for all $i$.
We'll show that $\zstrat^{*}$ is almost surely winning for $\bigcap_{i \in \N} \G \F \, A_i$ from
$\state_{0}$ in $\mdp'$. To this end, we show that the dual objective
$\neg \bigcap_{i \in \N} \G \F \, A_i$ is a nullset under $\zstrat^{*}$.

We know that
\begin{equation}\label{eq:neg-limsup}
\neg\bigcap_{i \in \N} \G \F \, A_i = \bigcup_{i \in \N} \F\G\, \neg A_{i}.
\end{equation}

By the definition of the sets $A_i^j$ we have
$\bigcup_{j \ge i} A_j^j \subseteq A_i$ and thus
$\neg A_i \subseteq \bigcap_{j \ge i} \neg A_j^j$.
Moreover, we cannot reach $A_j^j$ in fewer than $j$ steps from $\state_0$.
Hence, for any $k \in \N$ we have
\begin{equation}\label{eq:remove-k-prefix}
\F^{\leq k} \G\,\neg A_{i}
\subseteq
\F^{\leq k} \G\, \left(\cap_{j \ge i} \neg A_j^j\right)
\subseteq
\G\, \left(\cap_{j > \max\{ i, k \}} \neg A_j^j\right).
\end{equation}

Since $\mdp'$ (like $\mdp$) is universally transient, only a nullset of runs stays in a finite set forever, in particular each of the finite sets $\states_m$
for every $m \in \N$.
Thus every run from $\state_0$ must eventually reach the set $X_m$ for each
$m \in \N$.
This holds under every strategy, and thus in particular
under the strategy $\zstrat^*$, i.e., 
\begin{equation}\label{eq:acyc-reach-Xi}
\probm_{\mdp', s_{0}, \zstrat^*}(\cap_{m \in \N}\, \F\, X_m) = 1
\end{equation}

By the construction of $\zstrat^{*}$, for any $i,k \in \N$ we obtain the following
\begin{equation}\label{eq:always-not-Tjj}
\begin{array}{ll}
\probm_{\mdp', s_{0}, \zstrat^{*}}(\G\, (\cap_{j > \max\{ i, k \}} \neg A_j^j))\\
=
\probm_{\mdp', s_{0}, \zstrat^{*}}(\G\, (\cap_{j > \max\{ i, k \}}
\neg A_j^j) \cap \cap_{m \in \N} \F\, X_m) & \text{by \eqref{eq:acyc-reach-Xi}}\\
\le
\probm_{\mdp', s_{0}, \zstrat^{*}}(
\cap_{j > \max\{ i, k \}} \F (X_{j-1} \cap \G \neg A_j^j))
& \text{set inclusion}\\
\le
\prod_{j > \max\{ i, k \}} \max_{\state \in X_{j-1}}\probm_{\mdp',\state, \zstrat^{*}}(\G\neg A_j^j)
& \text{$X_{j-1}$ finite}
\\
=
\prod_{j > \max\{ i, k \}} \max_{\state \in X_{j-1}}\probm_{\mdp_j,  \state, \zstrat_j}(\neg\F\, A_j^j)
& \text{def.\ $\zstrat^{*}$, $\G$}
\\
\leq
\prod_{j > \max\{ i, k \}} 1/2 & \text{by \eqref{eq:T-iplus-iplus}}\\
= 0.
\end{array}
\end{equation}
Hence for all $i,k \in \N$,
\begin{equation}\label{eq:avoid-Tu-nullset}
\begin{array}{ll}
\probm_{\mdp', \state_{0}, \zstrat^{*}}(\F\G\, \neg A_{i})\\
= \lim_{k \to \infty} \probm_{\mdp', \state_{0}, \zstrat^{*}}(\F^{\le
  k}\G\, \neg A_{i}) & \text{cont.\ measures}\\
\le \lim_{k \to \infty} \probm_{\mdp', \state_{0}, \zstrat^{*}}(\G\,
(\bigcap_{j > \max\{ i, k \}} \neg A_j^j)) & \text{by \eqref{eq:remove-k-prefix}}\\
= 0 & \text{by \eqref{eq:always-not-Tjj}}
\end{array}
\end{equation}
Finally, we show that $\zstrat^*$ is almost surely winning for
$\bigcap_{i \in \N} \G \F \, A_i$
from $\state_0$ in $\mdp$.
\begin{align*}
 & \probm_{\mdp, \state_{0}, \zstrat^{*}}(\bigcap_{i \in \N} \G \F \, A_i) \\
 & = \probm_{\mdp', \state_{0}, \zstrat^{*}}(\bigcap_{i \in \N} \G \F \, A_i) & \text{def.\ of $\mdp'$}\\
 & = 1 - \probm_{\mdp', \state_{0}, \zstrat^{*}}(\neg \bigcap_{i \in \N} \G \F \, A_i) & \text{duality}\\
 & \ge 1 - \sum_{i \in \N} \probm_{\mdp', \state_{0}, \zstrat^{*}}(\F\G\,
  \neg A_{i}) & \text{\eqref{eq:neg-limsup} and union bound}\\
 & = 1  & \text{by \eqref{eq:avoid-Tu-nullset}} 
\end{align*} 
\end{proof}

\begin{corollary}\label{cor:GFsc}
Consider a countable MDP $\mdp$ with initial state $\state_{0}$ and a
$\bigcap_{i \in \N}\G \F \, A_i$ objective.
\begin{enumerate}
\item
If $\mdp$ is universally transient then there exists a deterministic positional strategy that is optimal from every
state that admits an optimal strategy.
\item
In general, even if $\mdp$ is not universally transient,
if there exists an optimal strategy from $\state_0$, then there
also exists an optimal deterministic Markov strategy.
\end{enumerate}
\end{corollary}
\begin{proof}
Towards item 1., 
since $\mdp$ is universally transient and $\bigcap_{i \in \N}\G \F \, A_i$ is
shift invariant, it follows
from \Cref{thm:optnestedbuchiunivtrans} and \Cref{as-to-opt} that there exists a single deterministic positional strategy that is
optimal from every state that has an optimal strategy.

Towards item 2., 
consider the MDP $S(\mdp)$ that is derived from $\mdp$ by encoding the step
counter from $\state_0$ into the states; cf.~\Cref{def:encodestep}.
$S(\mdp)$ is trivially universally transient.
We assume that there exists an optimal strategy from $\state_0$.
Then we can use \Cref{thm:optnestedbuchiunivtrans} on $S(\mdp)$ and \Cref{steptopoint}
to obtain an optimal deterministic Markov strategy from $\state_0$ in $\mdp$.
\end{proof}

\bigskip

Deterministic Markov strategies as in \Cref{cor:GFsc} are not the only type of `simple’ optimal
strategies for $\bigcap_{i \in \N} \G \F \, A_i$. Alternatively, optimal strategies (if they exist) can be chosen
as positional randomized, i.e., one can trade the step counter for randomization.
We start by considering the special case of almost surely winning strategies.

\smallskip
\begin{theorem}\label{thm:nestedbuchimr}
  Let $\mdp=\mdptuple$ be a
  countable MDP
  with initial state $\state_{0}$
  and a $\bigcap_{i \in \N} \G \F \, A_i$ objective.
  If there exists an almost surely winning strategy from $\state_0$,
  then there also exists an almost surely winning
  randomized positional strategy.
\end{theorem}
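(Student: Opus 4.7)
The plan is to construct the randomized positional strategy directly on the almost sure winning region, then verify that it meets each individual B\"uchi obligation. Let $W \eqdef \{s \in S : \valueof{\mdp, \bigcap_{i \in \N} \G \F A_i}{s} = 1\}$ be the almost sure winning region, which contains $s_0$ by hypothesis. Structurally, every random state in $W$ has all positive-probability successors in $W$ (else its value would drop below $1$), and every controlled state in $W$ has at least one successor in $W$. The candidate MR strategy $\hat\sigma$ would be defined at each controlled state $s \in W \cap \zstates$ by fixing an enumeration $t_1, t_2, \dots$ of the winning successors of $s$ and assigning normalized probability $2^{-k}$ to $t_k$; any distribution with full support on winning successors works equally well. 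Under $\hat\sigma$, runs started in $W$ remain in $W$ almost surely.

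Next I reduce the nested B\"uchi task to one-shot reachability. Because $\bigcap_{i \in \N} \G \F A_i$ is a countable intersection of shift invariant events, and a countable intersection of almost sure events is almost sure, it suffices to show $\probm_{\mdp, s, \hat\sigma}(\G \F A_i) = 1$ for every $s \in W$ and every $i$. Applying the strong Markov property at successive visits then reduces this to the one-shot claim $\probm_{\mdp, s, \hat\sigma}(\F A_i) = 1$ for every $s \in W$ and every $i$.

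The main obstacle is this reachability claim, because in countable MDPs placing positive mass on every winning choice is not by itself enough to guarantee almost sure reachability, since the probabilities of reaching $A_i$ may decay rapidly across states. The route I would take exploits two observations: (a) from each $s \in W$ the MDP has value $1$ for $\F A_i$, so for every $\eps > 0$ there is a strategy from $s$ reaching $A_i$ within some finite horizon with probability at least $1-\eps$, and every finite path it uses receives strictly positive probability under $\hat\sigma$; (b) after any failed finite attempt under $\hat\sigma$ the run is again in $W$, so the process decomposes into a sequence of ``attempts''. Introducing a well-founded rank on $W$ that measures the minimum horizon within which $\hat\sigma$ reaches $A_i$ with positive probability, and then combining this with a Borel--Cantelli argument over the attempts, would force the avoidance probability to be zero. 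This is where I expect most of the technical work to lie, since care is needed to ensure the attempts yield a uniform enough lower bound on success for Borel--Cantelli to apply.

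Once $\probm_{\mdp, s, \hat\sigma}(\F A_i) = 1$ is established for every $i$ and every $s \in W$, the strong Markov property upgrades this to $\probm_{\mdp, s, \hat\sigma}(\G \F A_i) = 1$ for every $i$, and countable intersection over $i$ yields the nested B\"uchi objective almost surely from $s_0$. If a single uniformly winning MR strategy on all of $W$ is desired, \Cref{lem:mr-as-uniform}, applicable because the objective is shift invariant, supplies the gluing.
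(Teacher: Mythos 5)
There is a genuine gap, and it sits exactly where you predict the ``technical work'' will lie: the strategy you propose is simply not almost surely winning in general, so no rank function or Borel--Cantelli argument can rescue the analysis. Consider the MDP with controlled states $s_n$ ($n \ge 0$), transitions $s_0 \to s_0$ and $s_n \to s_{n-1}$, $s_n \to s_{n+1}$ for $n \ge 1$, and the constant (hence trivially monotone) sequence $A_i \eqdef \{s_0 \to s_0\}$, so that $\bigcap_{i\in\N}\G\F\,A_i$ is just the objective of reaching $s_0$. Every state is surely winning (always step down), so your winning region $W$ is all of $S$ and \emph{every} successor of every state is a winning successor. But your prescription --- enumerate the winning successors and give the $k$-th one normalized weight $2^{-k}$, with the remark that any full-support distribution works equally well --- fails: if the enumeration at $s_n$ lists $s_{n+1}$ first, the induced run is a random walk with upward drift ($2/3$ up, $1/3$ down), and by gambler's ruin $\probm_{\mdp,s_1,\hat\sigma}(\F\, s_0) = 1/2 < 1$. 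Hence the one-shot claim $\probm_{\mdp,s,\hat\sigma}(\F\, A_i)=1$ on which your whole reduction rests is false for $\hat\sigma$. This is not a fixable bookkeeping issue: under a full-support strategy the per-attempt success probabilities have no uniform lower bound and can decay fast enough to be summable (here the walk drifts away from $A_i$), which is precisely the regime in which the second Borel--Cantelli lemma gives nothing. You in fact state this obstacle yourself, but then propose exactly the strategy it defeats.

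This is why the paper's construction looks so different: the MR strategy is not defined by any local rule on the winning region. After a w.l.o.g.\ reduction to finitely branching MDPs (\Cref{lem:branchingreplacement}) and to the case where all states are a.s.\ winning, the paper invokes the existence of \emph{uniform} a.s.\ winning \emph{deterministic} positional strategies $\xstrat^j$ for each reachability objective $\F\,A_j$ \cite{KMST2020c}, and assembles the MR strategy as a countable mixture of these along an inductively constructed sequence of finite ``bubbles'' $\states_0 \subseteq \states_1 \subseteq \cdots$ around $s_0$: on the layer $\states_{i+1}\setminus\states_i$ it plays a designated MD strategy for $\F\,A_{i+1}$ with large probability $p_{i+1}$ and each $\xstrat^j$ with probability $(1-p_{i+1})2^{-j}$, where $p_{i+1}$ is tuned (using finiteness of the layer) so that from every state of $\states_i$ the set $A_{i+1}$ is hit within the next layer with probability $\ge 1/4$. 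These engineered, layer-uniform bounds are what feed the concluding product argument, split into transient runs (which cross infinitely many layers, each giving an independent $\ge 1/4$ chance) and non-transient runs (where the $\xstrat^i$-component, played with positive probability in a finite neighbourhood of a recurrent state, gives a fixed positive success chance per visit). In short, the correct randomized positional strategy must be built globally out of deterministic positional reachability strategies with carefully chosen mixing weights; it cannot be read off from the winning region state by state.
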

\begin{proof}[Proof outline]
  W.l.o.g.\ one can assume that $\mdp$ is finitely branching, and
  that every state admits an almost surely winning strategy.
A run satisfies $\bigcap_{i \in \N} \G \F \, A_i$
iff it visits infinitely many transitions
in $A_i$ for every $i \in \N$.
We partition the state space into infinitely many finite regions
in the shape of expanding rings around the initial state $\state_0$,
where membership in each ring
is defined via certain lower and upper bounds on the length of the shortest path from
$\state_0$.
Inside each ring, we fix a randomized positional strategy
that is a weighted combination of countably infinitely many different
deterministic positional strategies.
Inside the $i$-th ring, the strategy focuses with high probability
on visiting a transition in $A_i$, such that $A_i$ is visited with probability
$\ge 1/2$ in the $i$-th ring.
However, for each other index $j \in \N \setminus \{i\}$,
the strategy also retains some small fixed positive chance $q_j >0$ to visit
the set $A_j$.
The set of runs from $\state_0$ can then be partitioned into the
following two types.
Non-transient runs stay inside some finite subspace forever.
Hence, for each $j\in \N$, one infinitely often gets the same
fixed small positive chance $q_j >0$ to visit $A_j$, and thus one visits each $A_j$
infinitely often almost surely.
Transient runs eventually leave every finite set forever, and thus
visit the $i$-th ring for every $i$. So, for each $i \in \N$, there
is a chance $\ge 1/2$ of visiting $A_i$.

In either of the two cases,
the set of runs that don't satisfy $\bigcap_{i \in \N} \G \F \, A_i$
is a nullset, because the sets $A_i$ form a monotone decreasing chain.
\end{proof}
  
\begin{proof}
W.l.o.g.\ we can assume that $\mdp$ is finitely branching by \Cref{lem:branchingreplacement}.
Also w.l.o.g.\ we can assume that every state in $\mdp$ has an almost surely
winning strategy, because otherwise it would suffice to consider a sub-MDP as follows:
Consider an almost surely winning strategy
$\zstrat'$ from $\state_{0}$ in $\mdp$.
Let $\states' \subseteq \states$ be the subset of states that are visited
under $\zstrat'$. By restricting $\mdp$ to $\states'$, we obtain a sub-MDP
$\mdp'$ where all states are almost surely winning for $\bigcap_{i \in \N} \G \F \, A_i$.
It now suffices to construct an almost surely winning
randomized positional strategy from $\state_0$ in $\mdp'$, since the same
strategy is also almost surely winning from $\state_0$ in $\mdp$.
Thus, in the rest of the proof we can assume that every state in $\mdp$ has an almost surely
winning strategy.
  
  Let $d(\state_1,\state_2)$ be the length of the shortest path from state
  $\state_1$ to state $\state_2$ in $\mdp$. For every finite set of states
  $\hat{\states} \subseteq \states$ let
  $\bubble{n}{\hat{\states}} \eqdef \{\state' \mid \exists \state \in \hat{\states}.\ d(\state,\state') \leq n \}$.
  Since $\mdp$ is finitely branching, $\bubble{n}{\hat{\states}}$ is finite
  for every finite set $\hat{\states}$ and $n \in \N$. 

All states in $\mdp$ are almost surely winning for $\bigcap_{i \in \N} \G \F \, A_i$
and $\bigcap_{i \in \N} \G \F \, A_i \subseteq \F\, A_j$ for every $j \ge 1$,
thus
it follows that all states in $\mdp$ are almost surely winning
for $\F\, A_j$.
By \cite{KMST2020c}, for every $j \ge 1$, there exists a \emph{uniform}
deterministic positional strategy $\xstrat^j$ (that thus does not
depend on the start state) that is a.s.\ winning for the reachability objective $\F\, A_j$ in $\mdp$, i.e.,
\begin{equation}\label{eq:thm:mr-opt-pp-uniform}
\forall \state\, \probm_{\mdp,\state,\xstrat^j}(\F\, A_j) = 1
\end{equation}

We inductively construct an increasing (w.r.t.\ set inclusion) sequence 
of sets of states $\states_i$ with $i = -1,0,1,2,\dots$ such that
$\states_{-1} \eqdef \emptyset$,
$\states_0 \eqdef \{\state_0\}$,
$\states_{i+1} \eqdef \bubble{n_{i+1}}{\states_i}$
(for $i \ge 0$ and suitably chosen numbers $n_i$),
randomized positional strategies $\zstrat_i$
and MDPs $\mdp_0 \eqdef \mdp$ and $\mdp_{i+1}$ derived from $\mdp_i$
by fixing the choices according to $\zstrat_{i+1}$ inside $\states_{i+1}$
such that the following two properties hold:
\begin{equation}\label{eq:thm:mr-opt-pp-1}
\forall i\, \forall \state\, \exists \zstrat'\ \probm_{\mdp_i,\state,\zstrat'}(\bigcap_{j \in \N} \G \F \, A_j) = 1
\end{equation}
i.e., all states in $\mdp_i$ are still a.s.\ winning, and
\begin{equation}\label{eq:thm:mr-opt-pp-2}
\forall \state \in \states_{i-1}
\ \probm_{\mdp_{i-1},\state,\zstrat_{i}}(\F^{\le n_{i}} A_{i})
\ge 1/4
\end{equation}
For the base case of $i=0$ we have $\states_0 = \{\state_0\}$
and $\mdp_0 = \mdp$. \Cref{eq:thm:mr-opt-pp-1} holds by our assumption on
$\mdp$ and \Cref{eq:thm:mr-opt-pp-2} for $i=0$ is vacuously true, since $\states_{-1} = \emptyset$.

For the inductive step $i \to i+1$ consider $\mdp_i$ and let $\state \in \states$.
By the induction hypothesis and \eqref{eq:thm:mr-opt-pp-1} there is a strategy $\zstrat'$ such that
$
\probm_{\mdp_i,\state,\zstrat'}(\F\, A_{i+1}) \ge 
\probm_{\mdp_i,\state,\zstrat'}(\bigcap_{j \in \N} \G \F \, A_j) = 1
$.
I.e., all states in $\mdp_i$ are a.s.\ winning for the reachability objective
$\F\, A_{i+1}$.

By \cite{KMST2020c} there exists a \emph{uniform}
deterministic positional
strategy $\zstrat_i^{i+1}$ (that thus does not
depend on the start state) that is a.s.\ winning for the reachability objective $\F\, A_{i+1}$ in $\mdp_i$, i.e.,
$
\forall \state \in \states\ \probm_{\mdp_i,\state,\zstrat_i^{i+1}}(\F\, A_{i+1}) = 1.
$
By continuity  of measures, we have $\forall \state \in \states$
\[
1 = \probm_{\mdp_i,\state,\zstrat_i^{i+1}}(\F\, A_{i+1}) 
  = \probm_{\mdp_i,\state,\zstrat_i^{i+1}}\left(\cup_{m\ge 0}\,\F^{\le m} A_{i+1}\right)
  = \lim_{m \to \infty} \probm_{\mdp_i,\state,\zstrat_i^{i+1}}(\F^{\le m} A_{i+1}).
\]
Thus we can pick a number $m(\state) \in \N$ such that
$$\probm_{\mdp_i,\state,\zstrat_i^{i+1}}(\F^{\le m(\state)} A_{i+1}) \ge 1/2.$$
(Note that $m(\state)$ does depend on the start state $\state$, unlike the
strategy $\zstrat_i^{i+1}$.)

Since $\states_i$ is finite, we can pick a finite number
$n_{i+1} \eqdef \max\{m(\state) \mid \state \in \states_i\} \in \N$
and obtain
\begin{equation}\label{eq:thm:mr-opt-pp-3}
\forall \state \in \states_i
\ \probm_{\mdp_i,\state,\zstrat_i^{i+1}}(\F^{\le n_{i+1}} A_{i+1}) \ge 1/2.
\end{equation}
We let $\states_{i+1} \eqdef \bubble{n_{i+1}}{\states_i}$, and thus the above
is attained inside $\states_{i+1}$.

We define the randomized positional strategy $\zstrat_{i+1}$ on the set of states $\states_{i+1}$
of $\mdp_i$ as follows.
(Note that in $\mdp_i$ the choices in states $\states_i$ are already fixed.)
At every state $\state$ in $\states_{i+1} \setminus \states_i$ play like $\zstrat_i^{i+1}$ with
some large probability $p_{i+1}$ (to be determined) and otherwise,
play like the
deterministic positional
strategy $\xstrat^j$
(from \eqref{eq:thm:mr-opt-pp-uniform} above)
with a small
probability $(1-p_{i+1})\cdot 2^{-j}$ for all $j \ge 1$.
Since we have $p_{i+1} + (1-p_{i+1})\sum_{j \ge 1} 2^{-j} = 1$, this is a
distribution and the strategy $\zstrat_{i+1}$ is well defined.

Intuitively, $\zstrat_{i+1}$ focuses on the objective $\F\, A_{i+1}$
with a large probability $p_{i+1}$,
but keeps every other reachability objective $\F\, A_j$ in sight with a
nonzero probability.
Note that the attainment of $\zstrat_{i+1}$ with respect to $\F^{\le n_{i+1}} A_{i+1}$
is a continuous function in $p_{i+1}$, which converges to a value $\ge 1/2$ for
every start state $\state \in \states_i$ by \eqref{eq:thm:mr-opt-pp-3}.
I.e.,
\[
\forall \state \in \states_i\ \lim_{p_{i+1} \to 1}
\ \probm_{\mdp_i,\state,\zstrat_{i+1}}(\F^{\le n_{i+1}} A_{i+1}) \ge 1/2.
\]
From the finiteness of $\states_i$ it follows that there exists
a probability $p_{i+1} < 1$ such that
\[
\forall \state \in \states_i
\ \probm_{\mdp_i,\state,\zstrat_{i+1}}(\F^{\le n_{i+1}} A_{i+1}) \ge 1/4
\]
and thus we obtain \eqref{eq:thm:mr-opt-pp-2} for $i+1$.

We obtain the MDP $\mdp_{i+1}$ from $\mdp_i$ by fixing all choices in
$\states_{i+1} \setminus \states_i$ according to $\zstrat_{i+1}$.

Now we show that \eqref{eq:thm:mr-opt-pp-1} holds for $\mdp_{i+1}$.
We construct a strategy $\zstrat'$ that is a.s.\ winning for $\bigcap_{i \in \N} \G \F \, A_i$ in
$\mdp_{i+1}$ from every start state.
Inside $\states_{i+1}$ all choices are already fixed in $\mdp_{i+1}$
according to strategies $\zstrat_{k}$ for $1 \le k \le i+1$. 
By definition of these $\zstrat_{k}$, for all $j \ge 1$ at each state $\state \in \states_{i+1}$
the strategy $\xstrat^j$ is played with some positive probability $p(\state,j) >0$.
Then $p^j \eqdef \min_{\state \in \states_{i+1}} p(\state,j) > 0$, since
$\states_{i+1}$ is finite.
The strategy $\zstrat'$ plays in phases $j=1,2,3,\dots$.
In each phase $j$ it plays like $\xstrat^j$ everywhere outside of $\states_{i+1}$
until $A_j$ is reached, and then proceeds to phase $j+1$, etc.
It suffices to show that in every phase $j$ we reach $A_j$ eventually almost surely,
i.e., we'll show that
\begin{equation}\label{eq:thm:mr-opt-pp-as-j}
\forall \state\ \probm_{\mdp_{i+1},\state,\xstrat^j}(\F\, A_j) = 1
\end{equation}
We partition the complement of the objective $\F\, A_j$
into two parts and show that each is a nullset.
We partition $\neg\F\, A_j$ into those runs that visit $S_{i+1}$ infinitely often
and those that don't.
\begin{equation}\label{eq:partition-neg-Tj}
\neg\F\, A_j = \G\neg A_j = (\G\neg A_j \cap \G\F S_{i+1})
\uplus (\G\neg A_j \cap \F\G \neg S_{i+1})
\end{equation}

For the first part of the partition, consider the strategy $\xstrat^j$ on $\mdp$.
By \eqref{eq:thm:mr-opt-pp-uniform} and continuity of measures, for every state $\state$ we have  
$$1 = \probm_{\mdp,\state,\xstrat^j}(\F\, A_j) =
\lim_{m \to \infty} \probm_{\mdp,\state,\xstrat^j}(\F^{\le m} A_j).$$
Thus for every state $\state$ we can pick a number $m(\state)$ such that
$\probm_{\mdp,\state,\xstrat^j}(\F^{\le m(\state)} A_j) \ge 1/2$.
Let $m \eqdef \max_{\state \in \states_{i+1}} m(\state)$. The number $m$ is finite, since
$\states_{i+1}$ is finite.
Recall that in $\mdp_{i+1}$ the fixed strategy inside the finite set $\states_{i+1}$ plays
$\xstrat^j$ with some probability $\ge p^j >0$ at each state in $\states_{i+1}$.
Outside of $\states_{i+1}$ we play $\xstrat^j$ with probability $1$ (and thus
also with probability $\ge p^j$).
It follows that, for every $\state' \in \states_{i+1}$, we have
$\probm_{\mdp_{i+1},\state',\xstrat^j}(\F^{\le m} A_j) \ge (p^j)^m
\cdot 1/2 > 0$.
Therefore
\begin{equation}\label{eq:partition-part-1}
\probm_{\mdp_{i+1},\state,\xstrat^j}(\G\neg A_j \cap \G\F\, S_{i+1})
\le (1 - (p^j)^m \cdot 1/2)^\infty = 0
\end{equation}
For the second part of the partition, consider an arbitrary state $\state'$. We
have
\begin{equation}\label{eq:partition-part-2-pre}
\begin{array}{ll}
\probm_{\mdp_{i+1},\state',\xstrat^j}(\G\neg A_j \cap \G\neg S_{i+1})
= \probm_{\mdp,\state',\xstrat^j}(\G\neg A_j \cap \G\neg S_{i+1})
\le \probm_{\mdp,\state',\xstrat^j}(\G\neg A_j)
= 0
\end{array}
\end{equation}
The first equality above holds because $\mdp$ and $\mdp_{i+1}$ coincide outside
$\states_{i+1}$.
The last equality is due to \eqref{eq:thm:mr-opt-pp-uniform}.
Thus for all $\state \in \states$ 
\begin{equation}\label{eq:partition-part-2}
\begin{array}{ll}
\probm_{\mdp_{i+1},\state,\xstrat^j}(\G\neg A_j \cap \F\G\neg S_{i+1})
\le 
\probm_{\mdp_{i+1},\state,\xstrat^j}(\F(\G\neg A_j \cap \G\neg S_{i+1}))
& \text{set inclusion}\\
\le
\sup_{\state'} \probm_{\mdp_{i+1},\state',\xstrat^j}(\G\neg A_j \cap \G\neg S_{i+1})
& \text{def.\ of $\F$} \\
= 0
& \text{by \eqref{eq:partition-part-2-pre}}
\end{array}
\end{equation}
By combining
\eqref{eq:partition-neg-Tj},
\eqref{eq:partition-part-1}
and
\eqref{eq:partition-part-2}
we obtain \eqref{eq:thm:mr-opt-pp-as-j} and thus
\eqref{eq:thm:mr-opt-pp-1}.
This concludes the inductive construction.

\smallskip
Now we construct the
randomized positional
strategy $\zstrat$ that is almost surely winning for
$\bigcap_{i \in \N} \G \F \, A_i$ from
$\state_0$ in $\mdp$.
The strategy $\zstrat$ plays like $\zstrat_{i+1}$ at all states in
$\states_{i+1} \setminus \states_i$ for all $i$, i.e., it corresponds exactly to the
choices that are fixed in the derived MDPs $\mdp_i$.
We consider the dual objective $\neg\bigcap_{i \in \N} \G \F \, A_i$ and show that it is a
nullset under $\zstrat$.
First we note that 
\begin{equation}\label{eq:thm:mr-opt-pp-limsup-char}
\neg\bigcap_{i \in \N} \G \F \, A_i = \bigcup_{i \in \N} \F\G\ \neg A_i.
\end{equation}
We show that each part of this union $\F\G\ \neg A_i$ is a
nullset under $\zstrat$.
To this end, we partition $\F\G\ \neg A_i$ into two parts:
\begin{equation}\label{eq:thm:mr-opt-pp-part-transience}
\F\G\ \neg A_i = 
((\F\G\ \neg A_i) \cap \transience) \uplus
((\F\G\ \neg A_i) \cap \neg\transience)
\end{equation}
where
$\transience \eqdef \bigcap_{\state \in \states} \F\G \neg \state$.

For the first part of the partition of
\eqref{eq:thm:mr-opt-pp-part-transience},
we start by considering the slightly different objective 
$(\G\ \neg A_i) \cap \transience$.
Let $\state''$ be an arbitrary state in $\mdp$ that is reachable from
$\state_0$. Thus there exists some minimal index $k(\state'')$ such that
$\state'' \in \states_{k(\state'')}$. Now consider only the set of runs $\Runs{\mdp}{\state''}$
that start from $\state''$.
Transient runs eventually leave every finite set
forever. Thus
$\Runs{\mdp}{\state''} \cap (\G\ \neg A_i) \cap \transience \subseteq
\Runs{\mdp}{\state''} \cap (\G\ \neg A_i) \cap \bigcap_{j \ge \max(i,k(\state''))} \F(\states_j \setminus \states_{j-1})$.
On the other hand, by \eqref{eq:thm:mr-opt-pp-2} and the definition of
$\zstrat$ we have
$\forall \state \in \states_j
\ \probm_{\mdp,\state,\zstrat}(\F^{\le n_{j+1}} A_{j+1}) \ge 1/4$.
I.e., $A_{j+1}$ is visited with probability $\ge 1/4$ inside $\states_{j+1}$.
For the infinitely many $j \ge \max(i,k(\state''))$
we have the inclusion $A_i \supseteq A_j$
and thus for every state $\state''$ we have
\[
\begin{array}{ll}
\probm_{\mdp,\state'',\zstrat}((\G\ \neg A_i) \cap \transience)\\
\le
\probm_{\mdp,\state'',\zstrat}((\G\ \neg A_i) \cap \bigcap_{j \ge \max(i,k(\state''))} \F(\states_j \setminus \states_{j-1})) 
\le
(1 - 1/4)^\infty 
= 0
\end{array}
\]
Finally,
\begin{equation}\label{eq:thm:mr-opt-pp-transient}
\begin{array}{ll}
\probm_{\mdp,\state_0,\zstrat}((\F\G\ \neg A_i) \cap \transience)
\le \sup_{\state''} \probm_{\mdp,\state'',\zstrat}((\G\ \neg A_i) \cap \transience)
= 0.
\end{array}
\end{equation}

For the second part
of the partition of \eqref{eq:thm:mr-opt-pp-part-transience},
for some arbitrary state $\state$,
consider the event $(\F\G\ \neg A_i) \cap (\G\F\ \state)$.
Continuity of measures and \eqref{eq:thm:mr-opt-pp-uniform} yield  
$1 = \probm_{\mdp,\state,\xstrat^i}(\F\, A_i) =
\lim_{m \to \infty} \probm_{\mdp,\state,\xstrat^i}(\F^{\le m} A_i).$
Thus we can pick a number $m(\state)$ such that
$\probm_{\mdp,\state,\xstrat^i}(\F^{\le m(\state)} A_i) \ge 1/2.$
Recall that $\zstrat$ plays $\xstrat^i$ with some probability
$p^i(\state') >0$ at each state
$\state' \in \bubble{m(\state)}{\{\state\}}$
that is reachable from $\state$
in $\le m(\state)$ steps. Since $\mdp$ is finitely branching,
$\bubble{m(\state)}{\{\state\}}$ is finite
and thus $p^i \eqdef \min_{\state' \in \bubble{m(\state)}{\{\state\}}} p^i(\state') > 0.$
It follows that
\[q_i \eqdef \probm_{\mdp,\state,\zstrat}(\F^{\le m(\state)} A_i) \ge (p^i)^{m(\state)} \cdot 1/2 >0\]
i.e., after every visit to $\state$ we have a positive probability $q_i$ of reaching $A_i$.
Therefore for all states $\state''$ we have
$\probm_{\mdp,\state'',\zstrat}((\G\ \neg A_i) \cap
(\G\F\ \state)) \le (1-q_i)^\infty = 0.$
Thus,
\begin{equation}\label{eq:thm:mr-opt-pp-not-transient-part}
\begin{array}{ll}
\probm_{\mdp,\state_0,\zstrat}((\F\G\ \neg A_i) \cap
(\G\F\ \state)) 
\le
\sup_{\state''} \probm_{\mdp,\state'',\zstrat}((\G\ \neg A_i) \cap
(\G\F\ \state)) 
= 0.
\end{array}
\end{equation}
From
the definition of $\transience$, a union bound and \eqref{eq:thm:mr-opt-pp-not-transient-part} 
we obtain
\begin{equation}\label{eq:thm:mr-opt-pp-not-transient}
\begin{array}{ll}
\probm_{\mdp,\state_0,\zstrat}((\F\G\ \neg A_i) \cap \neg\transience)
= \probm_{\mdp,\state_0,\zstrat}((\F\G\ \neg A_i) \cap
\bigcup_{\state \in \states} \G\F\ \state) \\
\le 
\sum_{\state \in \states} \probm_{\mdp,\state_0,\zstrat}((\F\G\ \neg A_i) \cap
\G\F\ \state)
 = 0
\end{array}
\end{equation}

Finally, we show that $\zstrat$ wins almost surely.
\[
\begin{array}{llr}
\probm_{\mdp,\state_0,\zstrat}(\bigcap_{i \in \N} \G \F \, A_i)
\ge
1 - \probm_{\mdp,\state_0,\zstrat}(\neg\bigcap_{i \in \N} \G \F \, A_i) & \ \text{duality}\\
=
1 - \probm_{\mdp,\state_0,\zstrat}(\bigcup_i \F\G\ \neg A_i)
& \ \text{\eqref{eq:thm:mr-opt-pp-limsup-char}}\\
\ge
1 - \sum_i \probm_{\mdp,\state_0,\zstrat}(\F\G\ \neg A_i) &
\ \text{union bound}\\
= 1 - \sum_i \probm_{\mdp,\state_0,\zstrat}((\F\G\ \neg A_i)
\cap \transience) & \ \text{case split} \\
\quad - \sum_i \probm_{\mdp,\state_0,\zstrat}((\F\G\ \neg A_i) \cap\neg\transience) \\
= 1 & \ \text{\eqref{eq:thm:mr-opt-pp-transient}, \eqref{eq:thm:mr-opt-pp-not-transient}} & 
\end{array}
\] 
\end{proof}

\begin{corollary}\label{cor:GFmr}
  Consider a countable MDP $\mdp$ and a
  $\bigcap_{i \in \N} \G \F \, A_i$ objective.
  There exists a single randomized positional
  strategy that is optimal from every state that has an optimal strategy.
\end{corollary}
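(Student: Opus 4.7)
The plan is to derive this corollary directly by combining \Cref{thm:nestedbuchimr} with the general lifting result \Cref{as-to-opt}. The key observation is that \Cref{thm:nestedbuchimr} provides exactly the hypothesis needed by \Cref{as-to-opt} in the randomized positional case.

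First I would observe that the objective $\bigcap_{i \in \N} \G \F \, A_i$ is shift invariant in every MDP: whether a run visits each $A_i$ infinitely often does not depend on any finite prefix, so this is immediate from the definition. This verifies one of the hypotheses of \Cref{as-to-opt}.

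Next I would invoke \Cref{thm:nestedbuchimr}, which states that in every countable MDP, every state from which there exists an almost surely winning strategy for $\bigcap_{i \in \N} \G \F \, A_i$ also admits an almost surely winning randomized positional strategy from that state. This is precisely the "almost sure implies positional" property that \Cref{as-to-opt} takes as its hypothesis (in the randomized positional variant).

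Finally I would apply \Cref{as-to-opt} to the objective $\bigcap_{i \in \N} \G \F \, A_i$ in the randomized positional case. The theorem then yields a single randomized positional strategy $\hat{\zstrat}$ that is optimal from every state of $\mdp$ that has an optimal strategy, which is exactly the conclusion of the corollary.

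I don't expect a real obstacle here — the work has already been done in \Cref{thm:nestedbuchimr} (the a.s.\ winning case) and in \Cref{as-to-opt} (the lifting from a.s.\ winning to optimal via the conditioned MDP). The only thing to check carefully is that the hypotheses of \Cref{as-to-opt} are met: shift invariance in every MDP, and that the positional-sufficiency property holds in every MDP (not only universally transient ones). Both are satisfied, since \Cref{thm:nestedbuchimr} is stated without any transience assumption on $\mdp$.
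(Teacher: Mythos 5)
Your proposal is correct and matches the paper's own proof exactly: the paper derives \Cref{cor:GFmr} by noting shift invariance of $\bigcap_{i \in \N} \G \F \, A_i$ and combining \Cref{thm:nestedbuchimr} with \Cref{as-to-opt}. Nothing is missing; your care in checking that \Cref{thm:nestedbuchimr} holds in arbitrary (not just universally transient) MDPs is the right hypothesis to verify.
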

\begin{proof}
  Since the objective $\bigcap_{i \in \N} \G \F \, A_i$ is shift invariant in every MDP,
  the result follows from \Cref{thm:nestedbuchimr} and
  \Cref{as-to-opt}.
\end{proof}

\subsubsection{\texorpdfstring{Strategy Complexity of $\eps$-optimal  Strategies}
{Strategy Complexity of epsilon-optimal  Strategies}
}

First we show a general result about a combined objective that includes
$\transience$. It generalizes
the result on a combined $\transience \cap \always\eventually A$ objective of
\cite[Lemma 4]{KMST:Transient-arxiv}.

\bigskip
\begin{theorem}\label{thm:epsnestedbuchi}
Consider a countable MDP $\mdp$ with initial state $\state_{0}$ and the objective
$\formula \eqdef \transience \cap \bigcap_{i \in \N} \G \F \, A_i$.
For every $\eps > 0$ there exists an $\eps$-optimal deterministic 1-bit
strategy from $\state_0$.
\end{theorem}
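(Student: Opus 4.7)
The plan is to combine the conditioning technique of \Cref{def:conditionedmdp} and \Cref{lem:conditioned-construction} with an inductive bubble construction analogous to that of \Cref{thm:optnestedbuchiunivtrans}, using the single bit of memory to toggle between a ``searching'' phase that heads for the current target $A_i$ and an ``escaping'' phase that guarantees the run leaves the current finite bubble.

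First, since $\bigcap_{i \in \N} \G\F A_i$ is unchanged if I replace each $A_i$ by $\bigcap_{j \le i} A_j$, I may assume without loss of generality that $A_{i+1} \subseteq A_i$. Next, since $\formula$ is shift invariant, I would pass to the conditioned MDP $\pmdp$ w.r.t.\ $\formula$; by \Cref{lem:conditioned-construction} it then suffices (after rescaling $\eps$) to construct a Det(1-bit) strategy that is almost surely winning from $s_0$ in $\pmdp$, where every state has value $1$ for $\formula$. In particular every state in $\pmdp$ has value $1$ both for $\transience$ and for each reachability objective $\F A_i$.

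The main construction is an induction that builds an increasing sequence of finite ``bubbles'' $S_0 \subsetneq S_1 \subsetneq \cdots$ around $s_0$ and uniform MD strategies $\sigma_i$ on $\pmdp$ with the property that from any state in $S_{i-1}$ the strategy $\sigma_i$ traverses a transition of $A_i$ while remaining inside $S_i$ with probability at least $1-\eps_i$, where positive reals $\eps_i$ are chosen with $\sum_i \eps_i < \eps$. Existence of such $\sigma_i$ uses value $1$ of the reachability objectives $\F A_i$, uniform MD reachability strategies from \cite{Ornstein:AMS1969}, and continuity of measures to obtain a finite time bound so the strategy fits inside the finite bubble. I would then define the Det(1-bit) strategy $\hat\sigma$ as follows: the single bit indicates \emph{searching} ($0$) or \emph{escaping} ($1$). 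In searching mode at a state $s \in S_i \setminus S_{i-1}$, $\hat\sigma$ plays $\sigma_i$ and flips the bit to $1$ upon traversing a transition in $A_i$; in escaping mode it plays a fixed uniform MD strategy that witnesses value $1$ for $\transience$, and the bit resets to $0$ on the first step that enters a strictly higher ring $S_j \setminus S_{j-1}$ with $j > i$. One bit suffices because the ring index is already implicitly encoded in the current MDP state.

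The hard part will be the verification. By a union bound using the $\eps_i$, with probability at least $1-\eps$ the strategy traverses a transition of $A_i$ inside $S_i$ for every $i$; the monotonicity $A_j \supseteq A_i$ for $j \le i$ then yields $\bigcap_{i \in \N} \G\F A_i$ on all such runs. Transience follows because the escape phase is driven by a strategy that almost surely leaves every finite set, so the run visits every ring $S_j \setminus S_{j-1}$ and is transient. The delicate point is arguing that the escape phase reliably moves the run into the next ring without looping inside the current bubble forever; this is exactly where value $1$ for $\transience$ in $\pmdp$ is needed in an essential way, yielding the required uniform MD escape strategy and the almost-sure separation between the ``searching'' and ``escaping'' roles of the single bit.
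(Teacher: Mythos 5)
There is a genuine gap here, on two levels. First, your use of the conditioned MDP is based on a false step: \Cref{lem:conditioned-construction} gives that every state of $\pmdp$ has \emph{value} $1$ for $\formula$, but value $1$ does not imply that an almost surely winning strategy exists (indeed, a.s.\ winning strategies in $\pmdp$ exist if and only if optimal strategies exist in $\mdp$, which \Cref{thm:epsnestedbuchi} does not assume). So ``it suffices to construct an a.s.\ winning Det(1-bit) strategy in $\pmdp$'' is not a legitimate reduction; at best you may aim for an $\eps$-optimal strategy in $\pmdp$, which is what your verification actually delivers. That misstatement alone would be repairable, but it matters because your construction leans on attainment (``a strategy that witnesses value $1$ for $\transience$'', reachability strategies that succeed ``almost surely''), and such strategies need not exist in $\pmdp$. (You also never reduce to finitely branching MDPs, cf.\ \Cref{lem:branchingreplacement}, which you need for your bubbles to be finite.)

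Second, and fatally, the 1-bit strategy you describe is not implementable and its guarantees do not compose. A deterministic 1-bit strategy's choice and memory update may depend only on the current state, the bit, and the transition being taken. Your escape-mode rule ``reset the bit on the first step entering a ring $S_j\setminus S_{j-1}$ with $j>i$'' refers to the index $i$ of the ring where the last target was hit, which is neither stored in the bit nor recoverable from the current state, so it is not a legal update rule. The implementable surrogate (dispatch by the ring of the \emph{current} state: in search mode at ring $k$ play $\sigma_{k+1}$) breaks the probabilistic guarantees: a search for $A_{k+1}$ started in ring $k$ must in general enter ring $k+1$ before hitting $A_{k+1}$, at which point this rule switches to $\sigma_{k+2}$ in mid-search; since your $\sigma_i$ are chosen independently via \cite{Ornstein:AMS1969}, each guarantee holds only under consistent play of that one strategy, and nothing controls the resulting mixture. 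Furthermore, because only uniform $\eps$-optimal (not optimal) MD strategies exist for $\transience$ and for the reachability objectives (\Cref{epstransience}, \cite{Ornstein:AMS1969}), each search/escape phase succeeds only with probability $1-\delta$ for a \emph{fixed} $\delta>0$, and over infinitely many phases these deficits compound ($\prod_{k}(1-\delta)=0$); making the errors summable requires phase-dependent strategies, which again cannot be indexed by one bit plus the current state. The paper's proof avoids all of this by a completely different route: from an arbitrary $\eps$-optimal strategy it extracts finite slices ${\sf Good}_i \eqdef A_i \cap \bubble{n_i}{s_0}$, proves $\transience \cap \G\F\,{\sf Good} \subseteq \formula$ (a transient run visiting the union of finite sets ${\sf Good}=\bigcup_i {\sf Good}_i$ infinitely often must visit $A_i$ for unboundedly large $i$, hence, by nestedness, every $A_i$ infinitely often), and then cites \cite[Lemma 4]{KMST:Transient-arxiv} for $\eps$-optimal deterministic 1-bit strategies for the \emph{single} B\"uchi objective $\G\F\,{\sf Good}$ combined with $\transience$. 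The successive-fixing machinery that would make bubble-by-bubble guarantees strategy-independent is used only in \Cref{thm:optnestedbuchiunivtrans}, and it relies on universal transience to keep all states a.s.\ winning after each fixing step --- a property your $\pmdp$ does not have.
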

\begin{proof}
By \Cref{lem:branchingreplacement}, w.l.o.g.\ we can assume that $\mdp$ is finitely branching. 
Since $\mdp$ is finitely branching, we can define
$d: S \to \N$ such that $d(s) \eqdef \min \{ \vert w \vert : w \text{ is a path from $s_{0}$ to $s$} \}$. 
Let $\bubble{n}{s_{0}} \eqdef \{ t \in \transition_{\mdp} \mid t= (s \transition_{\mdp} s'),\, d(s') \leq n \}$.
Since $\mdp$ is finitely branching, $\bubble{n}{s_{0}}$ is finite for every
$n \in \N$.

Let $A_{i}^{n} \eqdef A_{i} \cap \bubble{n}{s_{0}}$ and 
let $A_{i}^{>n} \eqdef A_{i} \setminus A_{i}^{n}$.
(In particular, $A_{i}^{>0} = A_i$.)

Transient runs eventually leave every finite set forever, and in particular
each of the finite sets $A_{x}^{n}$ for every $x,n \in \N$.
Therefore, the objective $\formula$ implies $\F (A_{x}^{>n})$
and thus $\formula$ can be written as follows.
\begin{equation} \label{eq:acyclicity}
\formula
=
\transience \cap \bigcap_{i \in \N} \G \F \, A_i
=
\transience \cap \bigcap_{i \in \N} \G \F \, A_i
\cap \F (A_{x}^{>n}) 
\end{equation}

\paragraph{First step of the proof.}
We consider an $\eps$-optimal strategy for
$\formula$ and show that it satisfies a certain stronger objective with
a probability that is almost as high (just losing one $\eps$).

Let $\tau$ be a general $\eps$-optimal strategy for $\formula$ from $s_0$, i.e.,
\begin{equation}\label{eq:pointepsupper-eps}
\probm_{\mdp,s_{0}, \tau}\left(\formula \right) \ge \valueof{\mdp, \formula}{s_{0}}- \eps.
\end{equation}
We now show that 
for a suitably chosen increasing sequence of natural numbers $0=n_0 < n_1 < n_2 \dots$ we have
\[
\probm_{\mdp, s_{0}, \tau} \left(  
\left( 
\bigcap_{j=1}^{\infty} \left( \F(A_{j}^{>n_{j-1}}) \cap \F(A_{j}^{n_{j}}) \right)
\right)  \cap \formula
\right) 
\geq \valueof{\mdp,\formula}{s_{0}} - 2 \eps.
\]
To this end, we prove by induction on $i$ that
the following property holds for all $i\ge 0$ and 
$\eps_j \eqdef \eps \cdot 2^{-j}$.

{\bf\noindent Induction hypothesis:}
\begin{equation}\label{eq:epsnestedbuchi-induction}
\probm_{\mdp, s_{0}, \tau} \left(
\left( 
\bigcap_{j=1}^{i} \left( \F(A_{j}^{>n_{j-1}})  \cap \F(A_{j}^{n_{j}}) \right)
\right)  \cap  \formula
\right) 
\geq \valueof{\mdp,\formula}{s_{0}} - \eps - \sum_{j=1}^{i} \eps_{j}.
\end{equation}

{\bf\noindent Base case:}
For the base case of $i=0$, the property follows directly from
\eqref{eq:pointepsupper-eps}.
(Empty index sets for intersection and sum.)

{\bf\noindent Induction step from $i$ to $i+1$:}
By the induction hypothesis we have
\[
\probm_{\mdp, s_{0}, \tau} \left(
\left( 
\bigcap_{j=1}^{i} \left( \F(A_{j}^{>n_{j-1}})  \cap \F(A_{j}^{n_{j}}) \right)
\right)  \cap  \formula
\right) 
\geq \valueof{\mdp,\formula}{s_{0}} - \eps - \sum_{j=1}^{i} \eps_{j}.
\]
Since, by \eqref{eq:acyclicity}, $\formula = \formula \cap \F(A_{i+1}^{>n_{i}})$, we obtain that 
\[
\probm_{\mdp, s_{0}, \tau} \Bigg(
\Bigg( 
\bigcap_{j=1}^{i} \left( \F(A_{j}^{>n_{j-1}}) \cap \F(A_{j}^{n_{j}}) \right) \Bigg)
 \cap \formula \cap \F(A_{i+1}^{>n_{i}})
\Bigg) 
\geq \valueof{\mdp,\formula}{s_{0}} - \eps - \sum_{j=1}^{i} \eps_{j}.
\]
We have $\F (A_{i+1}^{>n_{i}}) = \bigcup_{k \in \N}
\F^{\leq k} (A_{i+1}^{>n_{i}})= \bigcup_{k > n_{i}} \F^{\leq
k} (A_{i+1}^{>n_{i}})$. 
Hence, by continuity of measures,
\begin{align*}
\lim_{n_i < k \to \infty}
\probm_{\mdp, s_{0}, \tau} & \Bigg(
\Bigg( 
\bigcap_{j=1}^{i} \left( \F(A_{j}^{>n_{j-1}}) \cap \F(A_{j}^{n_{j}}) \right) \Bigg)
 \cap \formula \cap \F^{\le k}(A_{i+1}^{>n_{i}})
\Bigg) \\
& \geq \valueof{\mdp,\formula}{s_{0}} - \eps - \sum_{j=1}^{i} \eps_{j}.
\end{align*}
So there exists an $n_{i+1} > n_{i}$ such that 
\begin{align*}
\probm_{\mdp, s_{0}, \tau} & \Bigg( \Bigg( 
\bigcap_{j=1}^{i} \left( \F(A_{j}^{>n_{j-1}})  \cap \F(A_{j}^{n_{j}}) \right)
\Bigg) \cap
\formula \cap                              
\F^{\le n_{i+1}}(A_{i+1}^{>n_{i}}) 
\Bigg) \\
& \geq \valueof{\mdp,\bigcap_{i \in \N} \G \F \, A_i}{s_{0}} - \eps - \left(\sum_{j=1}^{i} \eps_{j}\right) - \eps_{i+1}
\end{align*}
I.e., we get 
\[
\probm_{\mdp, s_{0}, \tau} \left( \left( 
\bigcap_{j=1}^{i+1} \left( \F(A_{j}^{>n_{j-1}}) \cap \F(A_{j}^{n_{j}}) \right)  \right)
 \cap \formula \right) 
 \geq \valueof{\mdp,\formula}{s_{0}} - \eps - \sum_{j=1}^{i+1} \eps_{j}
\]
which completes the induction step.

By applying continuity of measures
to \eqref{eq:epsnestedbuchi-induction},
we obtain the desired result, i.e.,
\begin{equation}\label{eq:tauachievement}
\begin{split}
\probm_{\mdp, s_{0}, \tau} & \left(  
\left( 
\bigcap_{j=1}^{\infty} \left( \F(A_{j}^{>n_{j-1}}) \cap \F(A_{j}^{n_{j}}) \right)
\right)  \cap \formula
\right) \\
\geq & \ \valueof{\mdp,\formula}{s_{0}} - \eps - \sum_{j=1}^{\infty} \eps_{j} \\
= & \ \valueof{\mdp,\formula}{s_{0}} - 2 \eps. \\
\end{split}
\end{equation}

\smallskip
\paragraph{Second step of the proof.}
Here we consider a stronger objective
$\transience \cap \G \F \, {\sf Good} \subseteq \formula$
and show how it relates to $\formula$.

Let ${\sf Good}_{i} \eqdef A_i^{n_i}$
be those transitions inside $\bubble{n_{i}}{s_{0}}$ which are also in $A_i$. 
Let ${\sf Good} \eqdef \bigcup_{i \in \N} {\sf Good}_{i}$. 

The following claim shows that the inclusion
$\transience \cap \G \F \, {\sf Good} \subseteq \formula$
holds.

\begin{claim}\label{claim:goodislimsup}
  $\transience \cap \G \F \, {\sf Good} \subseteq \transience \cap
  \bigcap_{i \in \N} \G \F \, A_i \eqdef \formula$.
\end{claim}
\begin{proof}
Consider a run $\rho$ that satisfies $\transience \cap \G \F \, {\sf Good}$.
In particular, $\rho$ must visit ${\sf Good}$ infinitely often.
However, ${\sf Good} = \bigcup_{i \in \N} A_i^{n_i}$ where each set
$A_i^{n_i}$ is finite.
Since $\rho$ must also satisfy $\transience$, it can visit each of these
finite sets $A_i^{n_i}$ only finitely often.
Therefore, $\rho$ must visit sets $A_i^{n_i}$ for arbitrarily large numbers
$i$.
That is, $\rho$ satisfies $\bigcap_{i \in \N} \bigcup_{j \ge i} \F \, A_j^{n_j}$
and thus also $\bigcap_{i \in \N} \bigcup_{j \ge i} \F \, A_j$.
Since the sets $\{A_i\}_{i\in \N}$ are a decreasing chain, each visit to $A_j$
with $j \ge i$ is also a visit to $A_i$.
Hence $\rho$ also satisfies $\bigcap_{i \in \N} \G \F \, A_i$.
\end{proof}

Next we show that the value of $s_0$ for
$\transience \cap \G \F \, {\sf Good}$
is almost as high as its value for $\formula$ (losing only $2\eps$).
Due to the inclusions
\[
\left( \bigcap_{j=1}^{\infty} \F(A_{j}^{>n_{j-1}}) \cap \F(A_{j}^{n_{j}})
\right)
\cap \formula \subseteq \G \F \, {\sf Good} \cap \formula
\subseteq
\transience \cap \G \F \, {\sf Good}
,
\]
it follows immediately from \eqref{eq:tauachievement} that
\begin{equation}\label{eq:buchi-almost-limsup}
  \probm_{\mdp, s_{0}, \tau} (\transience \cap \G \F \, {\sf Good})
  \geq
\valueof{\mdp,\formula}{s_{0}} - 2 \eps,
\end{equation}
and thus
\begin{equation}\label{eq:buchigoodvalue}
  \valueof{\mdp,\transience \cap \G \F \, {\sf Good}}{s_{0}}
  \geq \probm_{\mdp, s_{0}, \tau} (\transience \cap \G \F \, {\sf Good}) 
\geq \valueof{\mdp,\formula}{s_{0}} - 2 \eps.
\end{equation}

\smallskip
\paragraph{Third step of the proof.}
Here we put the previous steps together and show the statement of the theorem.
By \cite[Lemma 4]{KMST:Transient-arxiv},
there exists an $\eps$-optimal deterministic 1-bit strategy $\sigma^{*}$ for
$\transience \cap \G \F \, {\sf Good}$ from $s_0$ in $\mdp$. Thus
\begin{align*}
  & \probm_{\mdp, s_{0}, \sigma^{*}}(\formula) \\
  & = \probm_{\mdp,s_{0},\sigma^{*}}(\transience \cap \bigcap_{i \in \N} \G \F \, A_i) & \text{by def.\ of $\formula$}\\  
  & \ge \probm_{\mdp, s_{0}, \sigma^{*}} (\transience \cap \G \F \, {\sf Good}) & \text{\Cref{claim:goodislimsup}} \\
  & \geq \valueof{\mdp,\transience \cap \G \F \, {\sf Good}}{s_{0}} - \eps & \text{$\sigma^{*}$ is $\eps$-opt.}\\
  & \geq \valueof{\mdp,\formula}{s_{0}} - 3 \eps.      & \text{by \eqref{eq:buchigoodvalue}}
\end{align*}
Since $\eps>0$ can be made arbitrarily small, the result follows.
\end{proof}

\begin{corollary}\label{cor:epsGF}
Consider a countable MDP $\mdp$ with initial state $\state_{0}$ and a
$\bigcap_{i \in \N}\G \F \, A_i$ objective.
\begin{enumerate}
\item
If $\mdp$ is universally transient, then there exists a deterministic 1-bit strategy that is $\eps$-optimal from $s_0$.
\item
In general, there exists a Det(SC+1-bit) $\eps$-optimal strategy from
$\state_0$ (even if $\mdp$ is not universally transient).
\end{enumerate}
\end{corollary}
\begin{proof}
  Towards item 1., if $\mdp$ is universally transient then, under every
  strategy $\sigma$, we have
  $\probm_{\mdp, s_{0}, \sigma}(\bigcap_{i \in \N} \G \F \, A_i)
  =
  \probm_{\mdp, s_{0}, \sigma}(\transience \cap \bigcap_{i \in \N} \G \F\, A_i)$
  and the result follows directly from \Cref{thm:epsnestedbuchi}.
  
  Towards item 2., consider the MDP $S(\mdp)$ that is derived from $\mdp$ by encoding the step
  counter from $\state_0$ into the states; cf.~\Cref{def:encodestep}.
  So $S(\mdp)$ is trivially universally transient.
Then we can use item 1.\ on $S(\mdp)$ to obtain a deterministic 1-bit strategy
on $S(\mdp)$. Finally, \Cref{steptopoint}
yields an $\eps$-optimal Det(SC+1-bit) strategy from $\state_0$ in $\mdp$.
\end{proof}

\subsection{Lower Bounds}\label{sec:limsupthreshold}
We present the lower bounds on the strategy complexity in terms of
the objective $\limsupppobj$, because it is much more intuitive to think of counterexamples
with rewards rather than thinking about which transitions are labeled as belonging to which set $A_i$. 
The MDPs in this section all have transition based rewards.

We begin by presenting the results for optimal strategies.

\begin{figure}
  \begin{center}
    \scalebox{1.2}{
    \begin{tikzpicture}
    
    \node[draw, inner sep=1pt] (S2) at (0,0) {$s_{0}$};

    \node (I1) at (8,0) {};
    \node (I2) at (8,-2) {};
    \node (I3) at (0,-1.5) {};
    
	\coordinate[shift={(0mm,0mm)}] (n) at (I3);    
    
    \node[draw, inner sep=1pt] (L1) at (1,0) {$r_{1}$};
    \node[draw, inner sep=1pt] (L2) at (2.5,0) {$r_{2}$};
    \node[draw, inner sep=1pt] (L3) at (4,0) {$r_{3}$};
    \node[draw, inner sep=1pt] (L4) at (5.5,0) {$r_{i}$};
    \node[draw, inner sep=1pt] (L5) at (7,0) {$r_{i+1}$};

    \node (HI1) at (1,-2) {};
    \node (HI2) at (2.5,-2) {};
    \node (HI3) at (4,-2) {};
    \node (HI4) at (5.5,-2) {};
    \node (HI5) at (7,-2) {};

    \draw[->,>=latex] (S2) edge node[above, midway]{$-1$} (L1)
    (L1) edge node[above, midway]{$-1$} (L2)
    (L2) edge node[above, midway]{$-1$} (L3)
    (L4) edge node[above, midway]{$-1$} (L5);
    
    
    \draw[->,>=latex, dotted, thick] (L3) edge node[above, midway]{} (L4);
    \draw[->,>=latex, dotted, thick] (L5) edge node[above, midway]{} (I1);

    \draw[->,>=latex,rounded corners=5pt] (L1) |- (n) -- (S2);   
    \draw[->,>=latex,rounded corners=5pt] (L2) |- (n) -- (S2);
    \draw[->,>=latex,rounded corners=5pt] (L3) |- (n) -- (S2);
    \draw[->,>=latex,rounded corners=5pt] (L4) |- (n) -- (S2);
    \draw[->,>=latex,rounded corners=5pt] (L5) |- (n) -- (S2);

    \node (P1) at (1.3, -0.75) {\scriptsize $-1$};
    \node (P2) at (2.8, -0.75) {\scriptsize $-\dfrac{1}{2}$};
    \node (P3) at (4.3, -0.75) {\scriptsize $-\dfrac{1}{3}$};
    \node (P4) at (6.1, -0.75) {\scriptsize $-\dfrac{1}{i}$};
    \node (P5) at (7.6, -0.75) {\scriptsize $-\dfrac{1}{i+1}$};
    
  \end{tikzpicture}
  }
    \caption{Det(F) strategies are not enough for almost sure $\limsupppobj$ in finitely branching MDPs. Note that this is very similar to \cite[Example 1]{Sudderth:2020}.
     }
    \label{limsuplowerdetf}
\end{center}    
    \end{figure}

\bigskip
\begin{proposition}\label{thm:limsuplowerdetf}
There exists a finitely branching countable MDP $\mdp$ with rational
rewards as in \Cref{limsuplowerdetf}
with initial state $\state_0$ such that
\begin{enumerate}
\item\label{thm:ppalmostlowerstep-1}
$\exists \hat{\zstrat}\, \probm_{\mdp, \state_{0},\hat{\zstrat}}(\limsupppobj) = 1$, 
i.e., the state $\state_{0}$ is almost surely winning for $\limsupppobj$.
\item\label{thm:ppalmostlowerstep-2}
For every Det(F) strategy $\zstrat$ we have $\probm_{\mdp, \state_{0},\zstrat}(\limsupppobj) = 0.$
\end{enumerate}
So almost surely winning strategies, when they exist, cannot be chosen Det(F).
\end{proposition}

\begin{proof}
Towards \Cref{thm:ppalmostlowerstep-1}, let
$\hat{\zstrat}$ be the deterministic strategy that turns off the ladder
(i.e., goes back to $\state_0$) at state $r_{i}$ upon visiting it for the
  first time and then never again thereafter. Note that $\hat{\zstrat}$
  uses infinite memory.
  Let $\playset$ be the set of all runs generated by $\hat{\zstrat}$.
  We want to show that $\probm_{\mdp, \hat{\zstrat}, \state_{0}}(\playset \wedge \limsupppobj)=1$.
  Since $\hat{\zstrat}$ is deterministic and all states in $\mdp$ are
  controlled, we have $\vert \playset \vert = 1$.
  Therefore it suffices to show that the unique $\rho \in \playset$ is winning.
  By definition of $\hat{\zstrat}$, $\rho$ sees every reward $\frac{1}{i}$
  exactly once for $i=1,2,3,\dots$.
  Hence the $\limsup$ of the payoffs that $\rho$ sees is $\limsup_{i \to \infty} -\dfrac{1}{i}= 0$
  and thus $\rho \in \limsupppobj$ and
  $\probm_{\mdp, \state_{0},\hat{\zstrat}}(\limsupppobj) = 1$.

Towards \Cref{thm:ppalmostlowerstep-2},  
let $\zstrat$ be a deterministic strategy with $k$ memory modes $\{0,1,\dots,k-1\}$. 
For each memory mode $\memconf \in \{0,1,\dots,k-1\}$,
consider $\zstrat$'s behavior from $\state_{0}$.
Since $\zstrat$ is deterministic, there are only two cases.
In the first case we never visit $\state_0$ again.
In the second case we deterministically take a transition $r_{i} \transition \state_0$ for some
particular $i(\memconf)$ that depends on the initial memory mode $\memconf$.
For runs of the first case the $\limsup$ of the payoffs is $-1$.
For runs of the second case, we never visit states $r_j$ for
$j > i_{\max} \eqdef \max_{\memconf \in \{0,\dots,k-1\}} i(\memconf)$ and thus the $\limsup$
of the payoffs is $\le -\frac{1}{i_{\max}} < 0$.
Thus no runs induced by $\zstrat$ satisfy $\limsupppobj$
and $\probm_{\mdp, \zstrat, \state_{0}}(\limsupppobj)=0$.
\end{proof}

\bigskip

Now we present the lower bound results for $\eps$-optimal strategy complexity.

The $\limsupppobj$ objective generalizes the B\"uchi objective $\G \F \, A$, even for
integer rewards, hence the lower bounds for $\eps$-optimal strategies for $\G \F \, A$ from
\cite{KMST:ICALP2019,KMSW2017} carry over. The following \Cref{thm:ppepstep} shows that
randomized strategies with finite memory are not sufficient.

\begin{figure}
\begin{center}
\scalebox{1.2}{
    \begin{tikzpicture}
    
    \node[draw] (S2) at (0,0) {$s_{0}$};

    \node (I1) at (9,0) {};
    \node (I2) at (9,-2.5) {};
    \node (I3) at (0,-2.5) {};
    
	\coordinate[shift={(0mm,0mm)}] (n) at (I3);    
    
    \node[draw] (L1) at (1.5,0) {};
    \node[draw] (L2) at (3,0) {};
    \node[draw] (L3) at (4.5,0) {};
    \node[draw] (L4) at (6,0) {};
    \node[draw] (L5) at (7.5,0) {};
    
    \node[draw, circle] (R1) at (1.5,-1.5) {};
    \node[draw, circle] (R2) at (3,-1.5) {};
    \node[draw, circle] (R3) at (4.5,-1.5) {};
    \node[draw, circle] (R4) at (6,-1.5) {};
    \node[draw, circle] (R5) at (7.5,-1.5) {}; 
    
    \node[draw] (T1) at (1.5,-2.5) {};
    \node[draw] (T2) at (3,-2.5) {};
    \node[draw] (T3) at (4.5,-2.5) {};
    \node[draw] (T4) at (6,-2.5) {};
    \node[draw] (T5) at (7.5,-2.5) {}; 

    \node[draw, circle, inner sep=1pt] (B1) at (2,-0.75) {\scriptsize $\perp$};
    \node[draw, circle, inner sep=1pt] (B2) at (3.5,-0.75) {\scriptsize $\perp$};
    \node[draw, circle, inner sep=1pt] (B3) at (5,-0.75) {\scriptsize $\perp$};
    \node[draw, circle, inner sep=1pt] (B4) at (6.5,-0.75) {\scriptsize $\perp$};
    \node[draw, circle, inner sep=1pt] (B5) at (8,-0.75) {\scriptsize $\perp$};

    \draw[->,>=latex] (S2) edge node[above, midway]{$-1$} (L1)
    (L1) edge node[above, midway]{$-1$} (L2)
    (L2) edge node[above, midway]{$-1$} (L3)
    (L4) edge node[above, midway]{$-1$} (L5);
    
    \draw[->,>=latex] (L1) edge node[left, midway]{$+0$} (R1)
    (L2) edge node[left, midway]{$+0$} (R2)
    (L3) edge node[left, midway]{$+0$} (R3)
    (L4) edge node[left, midway]{$+0$} (R4)
    (L5) edge node[left, midway]{$+0$} (R5);
    
    \draw[->,>=latex] (R1) edge node[midway, below, xshift=7pt, yshift=2pt]{\scriptsize $2^{-1}$} (B1)
    (R2) edge node[midway, below, xshift=7pt, yshift=2pt]{\scriptsize $2^{-2}$} (B2)
    (R3) edge node[midway, below, xshift=7pt, yshift=2pt]{\scriptsize $2^{-3}$} (B3)
    (R4) edge node[midway, below, xshift=7pt, yshift=2pt]{\scriptsize $2^{-i}$} (B4)
    (R5) edge node[midway, below, xshift=12pt, yshift=2pt]{\scriptsize $2^{-i-1}$} (B5);
    
    \draw[->,>=latex] (R1) edge node[left, midway]{$+0$} (T1)
    (R2) edge node[left, midway]{$+0$} (T2)
    (R3) edge node[left, midway]{$+0$} (T3)
    (R4) edge node[left, midway]{$+0$} (T4)
    (R5) edge node[left, midway]{$+0$} (T5);
    
    \draw[->,>=latex] (I2) edge[dotted, thick] (T5)
    (T5) edge node[below, midway]{$+0$} (T4)
    (T4) edge[dotted, thick] (T3)
    (T3) edge node[below, midway]{$+0$} (T2)
    (T2) edge node[below, midway]{$+0$} (T1);
    
    \node (V1) at (0.75, -2.75) {$+0$};

    \draw[->,>=latex, dotted, thick] (L3) edge node[above, midway]{} (L4);
    \draw[->,>=latex, dotted, thick] (L5) edge node[above, midway]{} (I1);

    \draw[->,>=latex,rounded corners=5pt] (T1) -| (S2); 
    
    \end{tikzpicture}
    }
    \caption{$\eps$-optimal strategies for $\limsupppobj$ cannot be chosen Rand(F).
      In this example $\limsupppobj$ and $\G \F \, \{\state_0\}$ coincide.}
    \label{ppepsstep}
\end{center}    
    \end{figure}

\bigskip
\begin{proposition}\label{thm:ppepstep}
  There exists an MDP $\mdp$ as in \Cref{ppepsstep} such that
  $\valueof{\mdp, \limsupppobj}{\state_0} = 1$ and any Rand(F) strategy $\zstrat$
  is such that $\probm_{\mdp,\state_0,\zstrat}(\limsupppobj) = 0$.
\end{proposition}
\begin{proof}
  The result follows immediately from
  ~\cite[Proposition 2]{Krcal:Thesis:2009,KMSW2017},
  since in $\mdp$, $\limsupppobj$ and $\G \F \, \{\state_0\}$ coincide by construction.
\end{proof}

In \Cref{thm:buchippextract} we show a lower bound that is orthogonal to that of
\Cref{thm:ppepstep}, namely that randomized Markov strategies are not
sufficient.
First, we recall a result from \cite{KMST:ICALP2019}.

\bigskip
\begin{theorem}(\cite[Theorem 3]{KMST:ICALP2019})\label{buchiexample}
There exists a finitely branching countable MDP $\mdp=\mdptuple$ as in
\cite[Figure 3]{KMST:ICALP2019}
with initial state $\state_{0}$ and a subset of states $F \subseteq \states$
where the step counter from $\state_0$ is implicit in the current state such that 
\begin{enumerate}
\item
$\valueof{\mdp,\G \F \, F}{\state_{0}} = 1$ and
\item
for every randomized Markov strategy $\zstrat$, we have 
$\probm_{\mdp,\state_{0},\zstrat}(\G \F \, F)=0$.
\end{enumerate}
\end{theorem}

\begin{definition}\label{buchipp}
Let $\mdp = \mdptuple$ and $F \subseteq \states$
be as in \Cref{buchiexample}.
We define transition rewards by $r: \states \times \states \to \R$ such that on $\mdp$ the objectives $\limsupppobj$ and $\G \F \, F$ coincide. I.e.\ 
\begin{equation*}
    r(\state \longrightarrow \state') \eqdef \begin{cases}
               -1               & \mbox{if}\ \state \notin F\\
               +1               & \mbox{if}\ \state \in F
           \end{cases}
\end{equation*}
\end{definition}

The following proposition shows that randomized Markov strategies are not
sufficient for the $\limsupppobj$ threshold objective.

\begin{proposition}\label{thm:buchippextract}
There exists a finitely branching countable MDP $\mdp$ as in \Cref{buchipp}
with integer rewards in $\{+1,-1\}$,
initial state $\state_{0}$ and the step counter from $\state_0$ implicit in the current state
such that
\begin{enumerate}
\item
$\valueof{\mdp,\limsupppobj}{\state_{0}} = 1$, but
\item
for every randomized Markov strategy
$\zstrat$, we have
$
\probm_{\mdp,\state_{0},\zstrat}(\limsupppobj)=0
$.
\end{enumerate}
\end{proposition}
\begin{proof}
This follows directly from \Cref{buchipp} and \Cref{buchiexample}.
\end{proof}

\section{\texorpdfstring{Strategy complexity of $\bigcap_{i \in \N} \F \G A_i$
    and $\liminfppobj$}{Strategy complexity of lim inf}}\label{sec:nestedcobuchi}

\subsection{\texorpdfstring{Upper bound for $\bigcap_{i \in \N} \F \G \, A_i$ in infinitely branching MDPs}
{Upper bound in infinitely branching MDPs}}

In order to prove the main results of this section, we use the following result
on the $\transience$ objective. Recall that, given an MDP $\mdp=\mdptuple$, 
$\transience \eqdef \bigwedge_{s \in S} \F \G \neg s$.

\bigskip
\begin{theorem}(\cite[Theorem 8]{KMST:Transient-arxiv})
\label{epstransience}
In every countable MDP there exist uniform $\eps$-optimal MD strategies for
$\transience$.
\end{theorem}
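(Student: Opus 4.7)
The plan is to reduce to a per-state statement and then lift via \Cref{epsilontooptimal}. Since $\transience$ is shift invariant in every MDP, \Cref{epsilontooptimal}.\ref{item:uniformeps} converts any per-state existence of $\eps$-optimal MD strategies for $\transience$ into a uniform $\eps$-optimal MD strategy. Thus it suffices to show that for every countable MDP $\mdp$ and every state $s_0$ there is an $\eps$-optimal MD strategy for $\transience$ from $s_0$.

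For the per-state claim, I would first pass to the conditioned MDP $\pmdp$ of \Cref{def:conditionedmdp} with respect to $\transience$. By \Cref{lem:conditioned-construction}, $\pmdp$ is universally transient, its value for $\transience$ is $1$ everywhere, and $\eps$-optimality in $\pmdp$ lifts back to $\eps\cdot\valueof{\mdp}{s_0}$-optimality in $\mdp$. Hence it suffices to construct, in a universally transient MDP where $\transience$ has value $1$ everywhere, an MD strategy attaining $\transience$ with probability at least $1-\eps$.

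In that setting I would enumerate the reachable states as $s_1,s_2,\ldots$ and split the error budget as $\eps_i \eqdef \eps\cdot 2^{-i}$. The MD strategy $\hat{\sigma}$ is built stagewise, maintaining a nested sequence of finite sets $T_0 \subseteq T_1 \subseteq \cdots$ on which its choices have already been committed. At stage $i$, in the sub-MDP induced by the not-yet-fixed choices, the reachability objective ``eventually leave $T_i$ and never return to $s_i$ afterwards'' still has value $1$ from every relevant state, because $\transience$ is achievable and $T_i$ is finite. By Ornstein's theorem \cite{Ornstein:AMS1969}, this reachability objective admits a uniform deterministic positional $\eps_i$-optimal strategy; I fix $\hat{\sigma}$ on the new frontier accordingly, and then enlarge $T_{i+1}$ to a finite superset large enough that the escape event happens inside $T_{i+1}$ with probability at least $1-\eps_i$ from every state in $T_i$. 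A union bound over the events ``$s_i$ is visited infinitely often'' then yields $\probm_{\hat{\sigma}}(\neg\transience) \le \sum_{i \ge 1} \eps_i = \eps$.

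The main obstacle is verifying consistency across stages: each stage commits deterministic positional choices on a finite set of new states, and one must check both that these commitments do not interact pathologically across different $i$'s and that fixing them preserves the full-value property used by Ornstein's theorem at the next stage. This is handled by the same kind of finite-set-escape argument in a universally transient MDP that drives the proof of \Cref{thm:optnestedbuchiunivtrans}; the initial conditioning step is precisely what makes this available.
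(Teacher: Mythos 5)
Note first that the paper does not prove this statement at all: it is imported verbatim from \cite[Theorem 8]{KMST:Transient-arxiv}, so your attempt has to stand on its own. Your opening reduction is sound: $\transience$ is shift invariant, so \Cref{epsilontooptimal}.\ref{item:uniformeps} legitimately reduces the uniform claim to per-state existence of $\eps$-optimal MD strategies, and there is no circularity since that theorem is proved independently of this one in the cited reference.

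The genuine gap is in the next step. You claim that ``by \Cref{lem:conditioned-construction}, $\pmdp$ is universally transient.'' The lemma asserts no such thing: it only says universal transience is \emph{preserved} (``if $\mdp$ is universally transient, then so is $\pmdp$''), and here $\mdp$ is an arbitrary countable MDP, so this clause gives you nothing. What conditioning on $\transience$ actually yields is $\valueof{\pmdp,\transience}{s}=1$ for every state, which is strictly weaker than universal transience (transience almost surely under \emph{every} strategy). Concretely, let $s$ be a controlled state with a self-loop $s \transition s$ and an edge into an infinite transient chain: every state has transience value $1$, both choices at $s$ are value-preserving and therefore survive the conditioning construction, and the strategy that always takes the self-loop is non-transient, so $\pmdp$ is not universally transient. (A secondary sign that this reduction needs care: the sink $s_\bot$ in \Cref{def:conditionedmdp} is made losing by attaching an infinite chain with bad \emph{rewards}, but for $\formula = \transience$ an infinite chain of fresh states is \emph{winning}; one would have to replace it by a self-loop.)

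This error is fatal rather than cosmetic, because everything afterwards leans on the missing hypothesis. You explicitly defer the crux of the stagewise construction --- that fixing $\eps_i$-optimal MD choices on finite sets preserves what later stages need, and that the stages do not interact pathologically --- to ``the same kind of finite-set-escape argument'' as in \Cref{thm:optnestedbuchiunivtrans}. But that proof invokes universal transience at exactly this point: it is what guarantees that in the partially fixed MDPs $\mdp_i$ runs leave every finite set almost surely \emph{under all strategies}, so that almost-sure winning survives the fixing. With only value $1$ available, the choices you fix at stage $i$ to escape $T_i$ can, via the choices fixed at other stages, funnel runs back into $T_i$, and the per-stage error bounds do not compose into your union bound; taming this interaction without universal transience is precisely the technical core of the proof in \cite{KMST:Transient-arxiv}, for which your outline supplies no substitute. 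A further, smaller, problem: the objective ``eventually leave $T_i$ and never return to $s_i$ afterwards'' is a reach-then-avoid (co-B\"uchi-like) objective, not a reachability objective, so Ornstein's theorem \cite{Ornstein:AMS1969} does not directly give the uniform $\eps_i$-optimal MD strategies you invoke for it.
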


\bigskip
\begin{lemma}\label{lem:transience-plus-safety}
Let $\mdp=\mdptuple$ be a countable MDP and $X \subseteq \transition$ a subset of the transitions.
For every $\eps >0$ there exists a uniform $\eps$-optimal MD strategy for the objective
$\transience \cap \G X$.
\end{lemma}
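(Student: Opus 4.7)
The plan is to reduce the combined objective $\transience \cap \G X$ to plain $\transience$ on a suitably modified MDP, and then apply \Cref{epstransience}.

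\textbf{Step 1: Construct a modified MDP $\mdp'$.} From $\mdp$, add a fresh losing sink state $s_\bot$ with a self-loop $s_\bot \transition s_\bot$. For every controlled state $s \in \zstates$, restrict the outgoing transitions to those in $X$; if $s$ has no outgoing transition in $X$, add a single transition $s \transition s_\bot$. For every random state $s \in \rstates$, keep the same successor distribution, but reroute every transition $s \transition t \notin X$ to $s \transition s_\bot$ (collapsing all the redirected probability mass onto $s_\bot$). In $\mdp'$ every transition originating from a state of $\mdp$ is either in $X$ or goes to $s_\bot$, and $s_\bot$ is a non-transient sink.

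\textbf{Step 2: Show the value equivalence}
\begin{equation*}
\valueof{\mdp,\,\transience \cap \G X}{s} \ =\ \valueof{\mdp',\,\transience}{s}
\qquad \text{for every } s \in \states.
\end{equation*}
For $(\ge)$, any strategy $\sigma$ in $\mdp$ induces a strategy $\sigma'$ in $\mdp'$ with the obvious translation on controlled states (identical choice when it is in $X$, arbitrary choice otherwise); runs that satisfy $\transience \cap \G X$ in $\mdp$ never use a redirected transition, hence correspond bijectively (with identical probability) to transient runs in $\mdp'$ that avoid $s_\bot$, and transient runs avoiding $s_\bot$ are a subset of transient runs in $\mdp'$. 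For $(\le)$, given a strategy $\sigma'$ in $\mdp'$, any run satisfying $\transience$ in $\mdp'$ must avoid $s_\bot$ (since $s_\bot$ is absorbing and hence not transient), so such a run uses only transitions in $X$ and corresponds to a run in $\mdp$ satisfying $\transience \cap \G X$ with the same probability.

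\textbf{Step 3: Apply \Cref{epstransience} and translate back.} By \Cref{epstransience}, in $\mdp'$ there is a uniform $\eps$-optimal MD strategy $\sigma'$ for $\transience$. Let $\sigma$ be the MD strategy on $\mdp$ defined by: at a controlled state $s$, if $\sigma'(s)$ chooses a transition in $X$, play the same transition; otherwise (i.e.\ $\sigma'(s)$ went to $s_\bot$, which only happens when $s$ has no $X$-transition at all) play an arbitrary transition. Note that in the latter case $\valueof{\mdp,\transience \cap \G X}{s} = 0$, so any choice is vacuously $\eps$-optimal. By the correspondence in Step 2, for every $s \in \states$,
\begin{equation*}
\probm_{\mdp,s,\sigma}(\transience \cap \G X)
\ =\ \probm_{\mdp',s,\sigma'}(\transience)
\ \ge\ \valueof{\mdp',\transience}{s} - \eps
\ =\ \valueof{\mdp,\transience \cap \G X}{s} - \eps,
\end{equation*}
so $\sigma$ is the desired uniform $\eps$-optimal MD strategy.

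\textbf{Main obstacle.} The only delicate point is Step 2: one must verify that the redirection of non-$X$ transitions in random states does not introduce any ``phantom'' transient winning runs in $\mdp'$ that have no counterpart in $\mdp$. This is handled by making $s_\bot$ a self-loop so that any visit to $s_\bot$ immediately forfeits $\transience$; consequently every transient run in $\mdp'$ avoids $s_\bot$ entirely and is a genuine run of $\mdp$ whose transitions all lie in $X$. The rest is a straightforward bookkeeping between the two MDPs and a direct invocation of \Cref{epstransience}.
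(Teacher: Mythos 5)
Your proof is correct and takes essentially the same approach as the paper's: both constructions redirect all transitions outside $X$ into a fresh absorbing sink $s_\bot$ (making any violation of $\G X$ immediately fatal for $\transience$), observe that in the modified MDP $\transience$ coincides with $\transience \cap \G X$, invoke \Cref{epstransience} there, and carry the resulting uniform $\eps$-optimal MD strategy back to $\mdp$. One cosmetic remark: in your Step 2 the labels $(\ge)$ and $(\le)$ are swapped relative to the inequalities your two arguments actually establish, but both directions are indeed proven, so the equality stands.
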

\begin{proof}
  Starting with $\mdp$, we construct a modified MDP $\mdp'$
  by adding a new state $s_\bot$ and a self-loop $s_\bot \to s_\bot$.
  Moreover, all transitions of $\mdp$ that are \emph{not} in $X$ are
  re-directed to $s_\bot$.
  Thus in $\mdp'$ all runs that visit some transition $\notin X$
  will eventually loop in $s_\bot$ and are not transient.
  Hence in $\mdp'$ we have that $\transience \cap \G X = \transience$.
  On the other hand, all runs from states $\state \in \states$ that visit only
  transitions in $X$ are unaffected by the differences between $\mdp$ and
  $\mdp'$.
  Therefore, for all states $\state \in \states$ and strategies $\tau$ from $\state$,
  \begin{equation}\label{eq:transience-plus-safety-one}
    \probm_{\mdp,\state,\tau}(\transience \cap \G X) = \probm_{\mdp',\state,\tau}(\transience \cap \G X)
  \end{equation}  
  By \Cref{epstransience}, for every $\eps >0$, there exists a uniform $\eps$-optimal MD strategy
  $\sigma$ for $\transience$ in $\mdp'$.
  This $\sigma$ is also uniform $\eps$-optimal for $\transience \cap \G X$ in $\mdp'$.
  
  We now carry $\sigma$ back to $\mdp$ and show that it is also
  uniform $\eps$-optimal for $\transience \cap \G X$ in $\mdp$.
  Let $\state_0 \in \states$ be an arbitrary start state in $\mdp$.
  \begin{align*}
    & \probm_{\mdp,\state_0,\sigma}(\transience \cap \G X) \\
    & = \probm_{\mdp',\state_0,\sigma}(\transience \cap \G X) &    \eqref{eq:transience-plus-safety-one} \\ 
    & \ge \valueof{\mdp',\transience \cap \G X}{\state_0} - \eps & \mbox{uniform $\eps$-optimality in $\mdp'$}\\ 
    & = \valueof{\mdp,\transience \cap \G X}{\state_0} - \eps    & \eqref{eq:transience-plus-safety-one}
  \end{align*}
\end{proof}
  
Moreover, we need an auxiliary lemma (inspired by \cite[Lemma 18]{KMST:Transient-arxiv}).

\begin{lemma}\label{lem:transientdrift}
Let $\mdp = \mdptuple$ be a countable MDP with initial state $s_0$. 
Let $\psi$ be any objective which implies $\transience$ and let $\sigma$ be a
strategy from the start state $s_0$. Let $X \subseteq S$ be a finite set of
states and $\delta > 0$. The following properties hold:
\begin{enumerate}
\item
  There is an $\ell \in \N$ such that $ 
\probm_{\mdp,s_0,\sigma} (\psi \cap G^{\geq \ell}(\neg X)) \geq \probm_{\mdp,s_0,\sigma} (\psi) - \delta$.
\item
  For each $n \in \N$, there exists a finite set $Y \subseteq \states$
  such that $\probm_{\mdp, s_0, \sigma}(\G^{\leq n} \, Y) \geq 1 - \delta$.
\end{enumerate}
\end{lemma}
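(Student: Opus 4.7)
My plan is to handle the two items separately, both by simple measure-theoretic arguments using continuity of probability measures together with the countable branching structure of the runs.

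For item~1, the plan is to exploit the assumption that $\psi$ implies $\transience$, i.e.\ (up to a null set) $\psi \subseteq \transience$, hence $\probm_{\mdp,s_0,\sigma}(\psi) = \probm_{\mdp,s_0,\sigma}(\psi \cap \transience)$. Since $X$ is a \emph{finite} set of states, any transient run visits $X$ only finitely often, so eventually leaves $X$ forever. In formulas, $\transience \subseteq \bigcup_{\ell \in \N} \G^{\ge \ell}(\neg X)$ and hence $\psi \cap \transience \subseteq \bigcup_{\ell} \psi \cap \G^{\ge \ell}(\neg X)$. The sets $\psi \cap \G^{\ge \ell}(\neg X)$ are nondecreasing in $\ell$, so by continuity of measures from below,
\[
\lim_{\ell \to \infty} \probm_{\mdp,s_0,\sigma}\bigl(\psi \cap \G^{\ge \ell}(\neg X)\bigr) \;\ge\; \probm_{\mdp,s_0,\sigma}(\psi \cap \transience) \;=\; \probm_{\mdp,s_0,\sigma}(\psi).
\]
Choosing $\ell$ large enough so that the left-hand side is within $\delta$ of its limit gives the claim.

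For item~2, the plan is to note that, even though $\mdp$ may be infinitely branching, the set of partial runs of any fixed length~$n$ starting at $s_0$ is countable, and under the strategy $\sigma$ these partial runs carry probabilities that sum to~$1$. Enumerate them as $w_1, w_2, \ldots$ with $\sum_i \probm_{\mdp,s_0,\sigma}(w_i \cdot \Runs{\mdp}{\cdot}) = 1$. Pick a finite index set~$I$ large enough that $\sum_{i \in I} \probm_{\mdp,s_0,\sigma}(w_i \cdot \Runs{\mdp}{\cdot}) \ge 1 - \delta$, and let $Y$ be the (finite) set of states that appear in any $w_i$ with $i \in I$. Then every run prolonging some $w_i$, $i \in I$, satisfies $\G^{\le n} Y$, whence $\probm_{\mdp,s_0,\sigma}(\G^{\le n} Y) \ge 1 - \delta$.

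Neither step is technically hard; the only thing to be careful about is the countability argument in item~2, which relies on the fact that from a fixed state under a fixed strategy the distribution over partial runs of length~$n$ is a discrete probability distribution (even in infinitely branching MDPs, each length-$n$ prefix has a well-defined probability and the total is~$1$). Once that is in place, both parts follow from continuity/approximation of countable sums. I would expect item~1 to be the more conceptually delicate of the two because it crucially uses that $X$ is finite and that $\psi \Rightarrow \transience$; without finiteness of $X$, one cannot conclude that transient runs eventually avoid $X$ forever.
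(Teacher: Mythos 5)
Your proof is correct and takes essentially the same approach as the paper's: for item~1 you use exactly the paper's observation that, since $\psi$ implies $\transience$ and $X$ is finite, $\psi$ coincides with $\psi \cap \bigcup_{\ell} \G^{\ge \ell}(\neg X)$, and then apply continuity of measures from below; for item~2 you use the same finite-approximation idea, merely packaged as a one-shot enumeration of the countably many length-$n$ prefixes rather than the paper's round-by-round tail-cutting with loss $\delta/n$ per step. Both versions are sound, so nothing further is needed.
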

\begin{proof}
\ \\
\begin{enumerate}
\item
  We know that $\psi$ implies $\transience$ and that
  $\transience = \bigcap_{s \in \states} \F \G \, \neg s \subseteq \bigcap_{s
    \in X} \F \G \, \neg s$. Therefore
\begin{equation}\label{eq:transience equalities}
\psi = \psi \cap \transience \subseteq \psi \cap \bigcap_{s \in X} \eventually\always \neg s \subseteq \psi.
\end{equation} 
This allows us to write:
\begin{align*}
\psi = \psi \cap \transience & = \psi \cap \bigcap_{s \in S} \F \G \, \neg s & \text{def.\ of $\transience$} \\
& = \psi \cap \bigcap_{s \in X} \F \G \, \neg s & \text{$X \subseteq S$ and \eqref{eq:transience equalities}} \\
& = \psi \cap \F \G \, \neg X & \text{$X$ is finite}\\
& = \psi \cap \bigcup_{k \in \N} \F^{\leq k} \G \, \neg X & \text{def.\ of $\F$}\\
& = \psi \cap \bigcup_{k \in \N} \G^{\geq k} \, \neg X & \text{$\F^{\leq k} \G \, = \G^{\geq k}$}\\
\end{align*}
Hence, by applying continuity of measures, we obtain that 
\[\probm_{\mdp,s_0,\sigma}(\psi) = 
\probm_{\mdp,s_0,\sigma}(\psi \cap \bigcup_{k \in \N} \G^{\geq k} \, \neg X)
= \lim_{k \to \infty} \probm_{\mdp,s_0,\sigma}(\psi \cap \G^{\geq k} \, \neg X).
\]
Thus, given $\delta$, using the definition of a limit, we know that there must be an $\ell \in \N$ such that 
$\probm_{\mdp,s_0,\sigma}(\psi \cap \G^{\geq \ell} \, \neg X) \geq \probm_{\mdp,s_0,\sigma}(\psi) - \delta$ as required.
\item
Consider the Markov chain induced by playing $\sigma$ from $s_0$.
In each round $i \le n$ we cut infinite tails off the distributions
such that we lose only $\le \delta/n$ probability. Thus we remain inside
some finite set of states $Y_i$ with probability
$\ge 1-\frac{i \cdot\delta}{n}$ after the $i$-th round.
By taking $Y \eqdef Y_n$, the result follows. \qedhere
\end{enumerate}
\end{proof}

Now we show a general result about a combined objective that includes
$\transience$.

\bigskip
\begin{theorem}\label{thm:liminfunivtrans}
  Consider a countable MDP $\mdp$ with initial state $s_{0}$ and
  the objective $\formula \eqdef \transience \cap \bigcap_{i \in \N} \F \G A_i$.
  For every $\eps >0$ there exists an $\eps$-optimal MD strategy from $s_0$.
\end{theorem}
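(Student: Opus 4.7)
The plan is to reduce the combined objective $\formula = \transience \cap \bigcap_{i \in \N} \F \G A_i$ to a much simpler safety-plus-transience objective of the form $\transience \cap \G X$, for a cleverly chosen set $X \subseteq \states$, and then invoke \Cref{lem:transience-plus-safety} to extract an MD strategy. Write $v \eqdef \valueof{\mdp,\formula}{s_0}$ and fix an $(\eps/4)$-optimal general strategy $\tau$ from $s_0$.

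For each $i \in \N$, I would use continuity of measures on the identity $\F\G A_i = \bigcup_k \G^{\geq k} A_i$, together with the inclusion $\formula \subseteq \F \G A_i$, to choose $\ell_i \in \N$ with $\probm_{\mdp,s_0,\tau}(\formula \setminus \G^{\geq \ell_i} A_i) \le \eps/2^{i+3}$. Then I would apply part (2) of \Cref{lem:transientdrift} to $\tau$ with horizon $n = \ell_i$ and $\delta = \eps/2^{i+3}$ to obtain a finite set $F_i \subseteq \states$ such that $\probm_{\mdp,s_0,\tau}(\neg\G^{\leq \ell_i} F_i) \le \eps/2^{i+3}$. Define
\[
X \;\eqdef\; \bigcap_{i \in \N} (A_i \cup F_i).
\]
Since $\G^{\geq \ell_i} A_i \cap \G^{\leq \ell_i} F_i \subseteq \G(A_i \cup F_i)$, a geometric union bound over $i$ gives
\[
\probm_{\mdp,s_0,\tau}(\formula \cap \G X) \;\ge\; \probm_{\mdp,s_0,\tau}(\formula) - \sum_{i \in \N} 2\cdot\eps/2^{i+3} \;\ge\; v - \tfrac{3\eps}{4}.
\]
Since $\formula \subseteq \transience$, this yields $\valueof{\mdp,\transience \cap \G X}{s_0} \ge v - 3\eps/4$.

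Now \Cref{lem:transience-plus-safety} furnishes a uniform $(\eps/4)$-optimal MD strategy $\sigma$ for $\transience \cap \G X$, so $\probm_{\mdp,s_0,\sigma}(\transience \cap \G X) \ge v - \eps$. The argument closes by verifying the inclusion $\transience \cap \G X \subseteq \formula$: under $\G X$ every visited state belongs to $A_i \cup F_i$ for each $i$, and because the run is transient it eventually leaves the finite set $F_i$ forever, so it is eventually trapped in $A_i$; this gives $\bigcap_i \F\G A_i$. Hence $\sigma$ is $\eps$-optimal MD for $\formula$ from $s_0$.

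The main obstacle is the choice of $X$: it must be simultaneously (i) tight enough that $\G X$ combined with transience already forces $\bigcap_i \F \G A_i$, and (ii) loose enough that the value of $\transience \cap \G X$ does not drop far below $v$. The gadget $X = \bigcap_i (A_i \cup F_i)$ with $F_i$ produced by \Cref{lem:transientdrift} handles (i) via transience and (ii) via the finite-prefix approximation; the geometric weighting $\eps/2^{i+3}$ of the losses at each $i$ is what keeps the total error within $\eps$.
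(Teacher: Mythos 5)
Your proposal is correct in substance and shares the paper's overall skeleton---reduce $\formula$ to a combined objective of the form $\transience \cap \G(\cdot)$ for a single safety set and then invoke \Cref{lem:transience-plus-safety}---but your construction of the safety set is genuinely different and simpler. The paper builds, by induction on $i$, increasing time bounds $n_i$ and nested finite state sets $S_i$ (using both parts of \Cref{lem:transientdrift} at every step), and takes the \emph{union}-type set ${\sf Good} = \bigcup_i (A_i \cap B_i)$, where $B_i$ is the set of transitions inside $S_{i+1}\setminus S_{i-1}$; the induction is exactly what localizes, in time and space, where the $A_i$-transitions occur, and the final inclusion into $\bigcap_{i}\F\G\,A_i$ also uses monotonicity of the $A_i$. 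You avoid the induction entirely: for each $i$ independently you approximate $\F\G\,A_i$ by $\G^{\ge \ell_i}A_i$ (continuity of measures) and confine the prefix to a finite set $F_i$ via \Cref{lem:transientdrift}(2), then take the \emph{intersection}-type set $X = \bigcap_i(A_i\cup F_i)$ with a geometric error budget; the inclusion $\transience \cap \G\,X \subseteq \formula$ then falls out one index at a time and does not even need the $A_i$ to be monotone. One repair is needed before this compiles into a proof: $A_i\subseteq\transition$ is a set of transitions while $F_i\subseteq\states$ is a set of states, so $A_i\cup F_i$ is ill-typed, and \Cref{lem:transience-plus-safety} requires $X\subseteq\transition$. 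Replace $F_i$ by $\hat F_i \eqdef \{(x\to y)\in\transition \mid x,y\in F_i\}$ (exactly the paper's $B_i$ device). Then $\G^{\ge\ell_i}A_i\cap\G^{\le\ell_i}F_i\subseteq\G(A_i\cup\hat F_i)$ still holds, since transitions taken before time $\ell_i$ have both endpoints in $F_i$ and later ones lie in $A_i$; moreover $\hat F_i$ is finite, so transience still forces every run to leave it forever. With that substitution your probability accounting ($v-\eps/4-\eps/2$ under $\tau$, minus $\eps/4$ for the MD strategy from \Cref{lem:transience-plus-safety}) goes through unchanged and yields the claimed $\eps$-optimal MD strategy.
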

\begin{proof}[Proof outline]
  We show that the objective $\formula$ can be
  sufficiently closely approximated by the objective
  $\transience \cap \G \, {\sf Good}$ for some suitably
  defined set of transitions ${\sf Good}$, and then use
  \Cref{lem:transience-plus-safety}.
\end{proof}
\begin{proof}
  Let $\sigma$ be a general $\eps$-optimal strategy for $\formula$
  from $s_0$ in $\mdp$, i.e. 
  \begin{equation}\label{eq:liminfunivtrans-one}
    \probm_{\mdp, s_0, \sigma}(\formula) \geq
    \valueof{\mdp,\formula}{s_0} - \eps .
  \end{equation}
Let $\eps_i \eqdef \eps \cdot \dfrac{2^{-i}}{3}$ for $i \geq 1$. 
We construct a sequence of increasing natural numbers $\{ n_i \}_{i \in \N}$ and finite sets $\{ S_i \}_{i \in \N}$ such that the following holds:

\begin{equation}\label{eq:transientepsdrift}
\probm_{\mdp, s_0, \sigma}\left( 
\formula
\cap \bigcap_{i \in \N} ( \F^{\leq n_i} \, (\G A_i \cap \G\neg S_{i-1}) \cap \G^{\leq n_i}\, S_i ) 
\right) \geq \valueof{\mdp,\formula}{s_0} - 2 \eps
\end{equation}

To this end, we prove by induction on $k$ that the following property holds for $k \geq 0$.
\textbf{Induction hypothesis:}
\begin{equation}\label{eq:indhypothesis}
\probm_{\mdp, s_0, \sigma}\left( 
\formula
\cap \bigcap_{i = 1}^{k} ( \F^{\leq n_i} \, (\G A_i \cap \G \neg S_{i-1}) \cap \G^{\leq n_i}\, S_i ) 
\right) \geq \valueof{\mdp,\formula}{s_0} - \eps - 3 \sum_{i = 1}^k \eps_i
\end{equation}

\textbf{Base case $k=0$:} Let $n_0 \eqdef 0$ and let $S_0 \eqdef \{ s_0 \}$ and $S_{-1} \eqdef \emptyset$.
Then setting $k=0$ yields empty index sets for the intersection and sum, reducing the base case to 
\eqref{eq:liminfunivtrans-one}.

\textbf{Induction step from $k$ to $k+1$:}

Assume that for some $k$ we have \eqref{eq:indhypothesis}.
To simplify the notation, let 
\[
V \eqdef \valueof{\mdp,\formula}{s_0} - \eps - 3 \sum_{i=1}^k \eps_i.
\]
By definition, $\formula$ implies $\F \G \, A_{k+1}$, giving us 
\begin{multline*}
\probm_{\mdp, s_0, \sigma}\left( 
\varphi
\cap \bigcap_{i=1}^k ( \F^{\leq n_i} \, (\G A_i \cap \G\neg S_{i-1}) \cap \G^{\leq n_i}\, S_i )
\right) 
= \\
\probm_{\mdp, s_0, \sigma}\left( 
\varphi
\cap \bigcap_{i=1}^k ( \F^{\leq n_i} \, (\G A_i \cap \G \neg S_{i-1}) \cap \G^{\leq n_i}\, S_i )
\cap \F \G \, A_{k+1}
\right).
\end{multline*}
Since $\formula$ implies $\transience$,
we can instantiate \Cref{lem:transientdrift}(1) with parameters $\psi = \formula
\cap \bigcap_{i=1}^k ( \F^{\leq n_i} \, (\G A_i \cap \G \neg S_{i-1}) \cap \G^{\leq n_i}\, S_i )
\cap \F \G \, A_{k+1}$,
$X = S_k$ and $\delta = \eps_{k+1}$ to obtain an $\ell \in \N$ such that
\[
\probm_{\mdp, s_0, \sigma}\left( 
\formula
\cap \bigcap_{i=1}^k ( \F^{\leq n_i} \, (\G A_i \cap \G \neg S_{i-1}) \cap \G^{\leq n_i}\, S_i )
\cap \F \G \, A_{k+1} \cap \G^{\geq \ell}\, \neg S_k
\right) \geq V - \eps_{k+1}.
\]
Now we use continuity of measures and the definition of $\F$ to obtain that 
\begin{equation*}
\begin{split}
& \probm_{\mdp, s_0, \sigma}\left( 
\formula
\cap \bigcap_{i=1}^k ( \F^{\leq n_i} \, (\G A_i \cap \G\neg S_{i-1}) \cap \G^{\leq n_i}\, S_i )
\cap \F \G \, A_{k+1} \cap \G^{\geq \ell}\, \neg S_k
\right)\\
& = 
\lim_{j \to \infty} \probm_{\mdp, s_0, \sigma}\left( 
\formula
\cap \bigcap_{i=1}^k ( \F^{\leq n_i} \, (\G A_i \cap \G \neg S_{i-1}) \cap \G^{\leq n_i}\, S_i )
\cap \F^{\leq j} \G \, A_{k+1} \cap \G^{\geq \ell}\, \neg S_k
\right).
\end{split}
\end{equation*}
Then, using the definition of a limit, we can choose an $m \in \N$ such that 
\begin{equation*}
\begin{split}
& \probm_{\mdp, s_0, \sigma}\left( 
\formula
\cap \bigcap_{i=1}^k ( \F^{\leq n_i} \, (\G A_i \cap \G\neg S_{i-1}) \cap \G^{\leq n_i}\, S_i )
\cap \F^{\leq m} \G \, A_{k+1} \cap \G^{\geq \ell}\, \neg S_k
\right) 
\\ 
& \geq  V - 2\eps_{k+1}.
\end{split}
\end{equation*}
We know that $\G^{\geq \ell}$ is equivalent to $\F^{\leq \ell} \G$, so 
$\G^{\geq \ell}\, \neg S_k = \F^{\leq \ell} \G \, \neg S_k$. Then, setting $n_{k+1} \eqdef \max \{ m, \ell \}$
we obtain that
\begin{equation*}
\begin{split}
& \probm_{\mdp, s_0, \sigma}\left( 
\formula
\cap \bigcap_{i=1}^k ( \F^{\leq n_i} \, (\G A_i \cap \G\neg S_{i-1}) \cap \G^{\leq n_i}\, S_i )
\cap \F^{\leq n_{k+1}} \, (\G A_{k+1} \cap \G\neg S_k )
\right) \\ 
& \geq  V - 2\eps_{k+1}.
\end{split}
\end{equation*}
Finally, we instantiate \Cref{lem:transientdrift}(2) with $n = n_{k+1}$ and $\delta = \eps_{k+1}$ to obtain 
a finite set of states $S_{k+1}$ such that
\begin{multline*}
\probm_{\mdp, s_0, \sigma}\Bigg( 
\formula
\cap \bigcap_{i=1}^k ( \F^{\leq n_i}\, (\G A_i \cap \G\neg S_{i-1}) \cap \G^{\leq n_i}\, S_i )
\cap \\ \F^{\leq n_{k+1}} \, (\G A_{k+1} \cap \G\neg S_k )
\cap \G^{\leq n_{k+1}} \, S_{k+1}
\Bigg) \geq V - 3 \eps_{k+1}
\end{multline*}
This yields \eqref{eq:indhypothesis} for $k+1$, thus concluding the induction step.
By using continuity of measures and the definition of the $\eps_i$, we
obtain \eqref{eq:transientepsdrift} from \eqref{eq:indhypothesis}. 

\bigskip
Note that $\bigcap_{i \in \N} ( \F^{\leq n_i} \, (\G A_i \cap \G \neg S_{i-1}) \cap \G^{\leq n_i}\, S_i )$
is equal to $\bigcap_{i \in \N} \G^{\geq n_i} A_i \cap \G^{\geq n_i} \neg S_{i-1} \cap \G^{\leq n_{i+1}} \, S_{i+1}$.
(Since $n_0 = 0$, the term $\G^{\leq n_0} S_0$ holds trivially.)
Hence $\sigma$ also satisfies
\begin{equation}\label{eq:goodforgoodi}
\probm_{\mdp, s_0, \sigma}\left( 
\varphi
\cap
\bigcap_{i \in \N} \G^{\geq n_i} A_i \cap \G^{\geq n_i} \neg S_{i-1} \cap \G^{\leq n_{i+1}} \, S_{i+1}
\right)
\geq \valueof{\mdp,\formula}{s_0} - 2 \eps
\end{equation}
Let $B_i \eqdef \{(x \to y) \mid x,y \in S_{i+1} \setminus S_{i-1}\}$
be the set of transitions with source and target inside $S_{i+1} \setminus S_{i-1}$.
Let ${\sf Good}_i \eqdef A_i \cap B_i$ for all $i$ and 
${\sf Good} \eqdef \bigcup_{i \in \N} {\sf Good}_i$. 
Our choice of ${\sf Good}_i$ is such that 
$\bigcap_{i \in \N} \G^{\geq n_i} A_i \cap \G^{\geq n_i} \neg S_{i-1} \cap \G^{\leq n_{i+1}} \, S_{i+1}$
implies
$\bigcap_{i \in \N} \G^{[n_{i},n_{i+1}]} \, {\sf Good}_i$ for all $i$.
Furthermore, $\bigcap_{i \in \N} \G^{[n_{i},n_{i+1}]} \, {\sf Good}_i \subseteq \G \, {\sf Good} $.
Hence we obtain from \eqref{eq:goodforgoodi} that 
\begin{equation}\label{eq:goodforgood}
\valueof{\mdp,\transience \cap \G \, {\sf Good}}{s_0}
\geq
\probm_{\mdp, s_0, \sigma}\left( 
\transience \cap \G \, {\sf Good} \right) 
\geq 
\valueof{\mdp,\formula}{s_0} - 2 \eps
\end{equation}
Now we show that 
\begin{equation}\label{eq:GoodinAi}
\transience \cap \G \, {\sf Good} \subseteq \bigcap_{i \in \N} \F \G \, A_i
\end{equation}
Consider any run $\rho \in \transience \cap \G \, {\sf Good}$.
Since $S_{i+1}$ is finite, $B_i$ is finite, and thus ${\sf Good}_i$ is finite.
By transience, $\rho$ can visit each finite set ${\sf Good}_i$
only finitely often.
Hence $\rho$ stays in ${\sf Good}$ forever whilst visiting sets ${\sf Good}_i$
for ever greater $i$.
Since ${\sf Good}_i \subseteq A_i$, $\rho \in \G \, {\sf Good}$
and the sets $A_i$ are monotone decreasing (wrt.\ set inclusion),
it follows that $\rho$ satisfies $\bigcap_{i \in \N} \F \G \, A_i$ as desired.

By \Cref{lem:transience-plus-safety},
there exists a memoryless deterministic (MD)
$\eps$-optimal strategy $\sigma^*$ from $s_0$ for 
the objective $\transience \cap \G \, {\sf Good}$.
Now we show that $\sigma^*$ is $3\eps$-optimal for $\formula$.
\begin{align*}
\probm_{\mdp, s_0, \sigma^*}(\formula) & = \probm_{\mdp, s_0, \sigma^*}(\transience \cap \bigcap_{i \in \N} \F \G \, A_i) \\
& \ge \probm_{\mdp, s_0, \sigma^*}(\transience \cap \G \, {\sf Good}) & \eqref{eq:GoodinAi} \\
& \geq \valueof{\mdp,\transience \cap \G \, {\sf Good}}{s_0} - \eps & \text{$\sigma^*$ is $\eps$-optimal} \\
& \geq \valueof{\mdp, \formula}{s_0} - 3 \eps & \eqref{eq:goodforgood}
\end{align*}
Since $\eps$ can be chosen arbitrarily small, the result follows.
\end{proof}

\begin{corollary}\label{infepsupperpp}
Given an MDP $\mdp$ and initial state $s_{0}$, $\eps$-optimal strategies for $\bigcap_{i \in \N} \F \G A_i$
can be chosen
\begin{enumerate}
\item MD if $\mdp$ is universally transient.
\item Det(SC) in general (even if $\mdp$ is not universally transient).
\end{enumerate}
\end{corollary}
\begin{proof}
  Towards item 1., if $\mdp$ is universally transient then
  (modulo a nullset under every strategy) $\bigcap_{i \in \N} \F \G A_i$
  coincides with $\transience \cap \bigcap_{i \in \N} \F \G A_i$, and
  thus the result follows from \Cref{thm:liminfunivtrans}.

Towards item 2., consider the MDP $S(\mdp)$ that is derived from $\mdp$ by encoding the step
counter from $\state_0$ into the states; cf.~\Cref{def:encodestep}.
$S(\mdp)$ is trivially universally transient.
Then item 1. yields an $\eps$-optimal MD strategy in $S(\mdp)$.
Finally, \Cref{steptopoint} yields an $\eps$-optimal Det(SC) strategy from $\state_0$ in $\mdp$.
\end{proof}

\begin{corollary}\label{infoptupperpp}
Given an MDP $\mdp$ and initial state $s_{0}$, optimal strategies for $\bigcap_{i \in \N} \F \G A_i$, 
where they exist, can be chosen
\begin{enumerate}
\item MD if $\mdp$ is universally transient.
\item Det(SC) in general (even if $\mdp$ is not universally transient).
\end{enumerate}
\end{corollary}
\begin{proof}
  Towards item 1., assume that $\mdp$ is universally transient.
  We apply \Cref{infepsupperpp}.1 to obtain an $\eps$-optimal MD strategy from $s_0$.
  Since $\bigcap_{i \in \N} \F \G A_i$ is a shift invariant objective,
  \Cref{epsilontooptimal}.\ref{item:epstooptimal} yields an MD
strategy that is optimal from every state of $\mdp$ that has an optimal
strategy.

Towards item 2., if $\mdp$ is not universally transient,
then we work in $S(\mdp)$ which is universally transient and 
apply item 1.\ to obtain optimal MD strategies from every
state of $S(\mdp)$ that has an optimal strategy.
By \Cref{steptopoint} we can translate
this MD strategy on $S(\mdp)$ back to a Det(SC) strategy in $\mdp$,
which is optimal for $\bigcap_{i \in \N} \F \G A_i$ from $s_{0}$ (provided that $s_0$ admits
any optimal strategy at all).
\end{proof}

\subsection{\texorpdfstring{Upper bound for $\bigcap_{i \in \N} \F \G \, A_i$  in finitely branching MDPs}
  {Upper bound in finitely branching MDPs}
}

We show that, in the special case of finitely branching MDPs, MD strategies
suffice for $\bigcap_{i \in \N} \F \G \, A_i$.
First we need the following auxiliary lemma, which holds only for finitely branching MDPs.

\bigskip
\begin{lemma}\label{lem:fbavoid}
Given a finitely branching countable MDP $\mdp$, a subset $T \subseteq \to$ of
the transitions and a state $\state$, we have
\[
\valueof{\mdp,\neg\F T}{\state} < 1
\ \Rightarrow\ \exists k \in \N.\,
\valueof{\mdp,\neg\F^{\le k} T}{\state} < 1 
\]
i.e., if it is impossible to completely avoid $T$ then
there is a bounded threshold $k$ and a fixed nonzero
chance of seeing $T$ within $\le k$ steps, regardless of the strategy.
\end{lemma}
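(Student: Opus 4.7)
I would prove the contrapositive: assuming $\valueof{\mdp,\neg\F^{\le k}T}{\state}=1$ for every $k$, I would show that $\valueof{\mdp,\neg\F T}{\state}=1$.

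The plan is to define, by backward induction on the horizon, a decreasing chain of ``surely safe'' sets of states $U_0\supseteq U_1\supseteq U_2\supseteq\cdots$, where $U_0\eqdef \states$ and
\[
U_{k+1}\eqdef
\bigl\{s\in\zstates \,\big|\, \exists\,s'\text{ with }s\to s',\ (s,s')\notin T,\ s'\in U_k\bigr\}
\cup
\bigl\{s\in\rstates \,\big|\, \forall s'\text{ with }\probp(s)(s')>0,\ (s,s')\notin T,\ s'\in U_k\bigr\}.
\]
By a routine induction on $k$, using that the horizon-$k$ reachable set from any state is finite (since $\mdp$ is finitely branching) and hence the value of the finite-horizon safety objective $\neg\F^{\le k}T$ is computed by backward induction, I would verify that $U_k=\{s\mid \valueof{\mdp,\neg\F^{\le k}T}{s}=1\}$. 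The assumption therefore places $\state$ in $U_\infty\eqdef\bigcap_k U_k$.

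Next, I would construct a deterministic positional strategy $\sigma^*$ on $U_\infty$ witnessing $\valueof{\mdp,\neg\F T}{\state}=1$. At a random state $s\in U_\infty$, every positive-probability successor $s'$ lies in every $U_k$ (and the transition is not in $T$) by definition, so $s'\in U_\infty$ automatically. At a controlled state $s\in U_\infty$, for each $k\ge 1$ the membership $s\in U_k$ supplies some successor $s'_k\in U_{k-1}$ with $(s,s'_k)\notin T$. This is precisely where finite branching is used: the set $\{s'\mid s\to s'\}$ is finite, so by the pigeonhole principle some fixed successor $s^*$ equals $s'_k$ for infinitely many $k$; since the $U_j$ are nested decreasing, $s^*\in U_\infty$. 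Let $\sigma^*$ choose such an $s^*$. Then runs from $\state$ under $\sigma^*$ remain in $U_\infty$ forever and never traverse $T$, so $\probm_{\mdp,\state,\sigma^*}(\neg\F T)=1$, contradicting $\valueof{\mdp,\neg\F T}{\state}<1$.

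The only non-routine step is the König-style extraction of $s^*$ at controlled states; everything else is bookkeeping. The finite-branching assumption is essential precisely here, since with infinite branching the witnesses $s'_k$ could all be distinct and no single successor need lie in $U_\infty$ (this is consistent with the fact, noted elsewhere in the paper, that safety-flavoured objectives behave differently for infinitely branching MDPs).
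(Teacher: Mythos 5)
Your proof is correct and is essentially the paper's own argument: the paper also proves the contrapositive and, at each controlled state, uses finite branching to extract a successor all of whose finite-horizon safety values equal $1$ (arguing that otherwise $k=\max_i k_i+1$ would contradict the assumption), which is exactly your pigeonhole extraction of $s^*\in U_\infty$; iterating this choice gives a deterministic strategy that surely avoids $T$, just as in your construction. Your explicit decreasing chain $U_0\supseteq U_1\supseteq\cdots$ and the set $U_\infty$ simply make precise (up to a harmless indexing convention for $\F^{\le k}$) the invariant that the paper maintains implicitly along the run.
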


\begin{proof}
If suffices to show that
$\forall k \in \N.\, \valueof{\mdp,\neg\eventually^{\le k} T}{\state} =1$
implies $\valueof{\mdp,\neg\eventually T}{\state} = 1$.
Since $\mdp$ is finitely branching, the state $\state$ has only finitely many
successors $\{\state_1,\dots,\state_n\}$.

Consider the case where $\state$ is a controlled state.
If we had the property for all $i$ with $1 \le i\le n$ there exists a $ k_i \in \N$ such that
$\valueof{\mdp,\neg\eventually^{\le k_i} T}{\state_{i}} < 1$,
then we would have
$\valueof{\mdp,\neg\eventually^{\le k} T}{\state} < 1$
for $k=(\max_{1 \le i \le n} k_i)+1$
which contradicts our assumption.
Thus there must exist an $i \in \{1,\dots,n\}$ with
$\forall k \in \N.\, \valueof{\mdp,\neg\eventually^{\le k} T}{\state_i} =1$.
We define a strategy $\zstrat$ that chooses the successor state $s_i$ when in
state $\state$.

Similarly, if $\state$ is a random state, we must have
$\forall k \in \N.\, \valueof{\mdp,\neg\eventually^{\le k} T}{\state_i} =1$
for all its successors $s_i$.

By using our constructed strategy $\zstrat$, we obtain
$\probm_{\mdp,\state,\zstrat}(\neg\eventually T)=1$ and therefore
$\valueof{\mdp,\neg\eventually T}{\state} = 1$ as required.
\end{proof}


The following theorem is very similar to \cite[Theorem 27]{MM:CONCUR2021}.
Here we present a much shorter proof that uses \Cref{thm:liminfunivtrans}.

\bigskip
\begin{theorem}\label{finpointpayoff}
  Consider a finitely branching countable MDP $\mdp =\mdptuple$
  with initial state $s_{0}$ and a $\bigcap_{i \in \N} \F \G A_i$ objective.
  For every $\eps >0$ there exists an $\eps$-optimal MD strategy from $s_0$.
\end{theorem}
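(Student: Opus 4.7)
My approach is to reduce the general finitely branching case to the universally transient case, then invoke \Cref{thm:liminfunivtrans}, which gives $\eps$-optimal MD strategies for the objective $\transience \cap \varphi$ where $\varphi \defeq \bigcap_{i \in \N} \F \G A_i$. The reduction exploits the finite branching of $\mdp$ in order to handle non-transient winning runs.

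First, I decompose the winning event as
\[
\varphi \;=\; (\varphi \cap \transience)\ \cup\ (\varphi \cap \neg \transience).
\]
The first disjunct is handled by \Cref{thm:liminfunivtrans}. For the second, observe that a winning run $\rho \in \varphi \cap \neg \transience$ visits some state infinitely often; moreover, any transition used infinitely often by $\rho$ must lie in $A_\infty \eqdef \bigcap_i A_i$, since a transition outside $A_j$ can be used only finitely often (otherwise $\F \G A_j$ would fail). By finite branching, the set of states that $\rho$ visits infinitely often is finite, and $\rho$ eventually remains within a transition-closed $A_\infty$-sub-MDP from which the controller can stay forever.

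To unify the two cases, I would construct a modified MDP $\widetilde{\mdp}$ from $\mdp$ as follows: identify the set $R \subseteq \states$ of states from which the controller has an MD strategy to remain forever inside a closed $A_\infty$-sub-MDP. For each such $s \in R$, splice in an additional "absorbing" branch $s \to s^{(1)} \to s^{(2)} \to \cdots$ consisting of fresh controlled states connected by $A_\infty$-transitions (so the chain trivially satisfies $\varphi$). The value for $\varphi$ is preserved: strategies in $\mdp$ lift to $\widetilde{\mdp}$, and strategies in $\widetilde{\mdp}$ that use a spliced chain can be simulated in $\mdp$ by instead playing the corresponding MD stay-forever strategy. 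Crucially, in $\widetilde{\mdp}$ the event $\varphi$ agrees with $\varphi \cap \transience$ up to a null set under every strategy, because any winning run either was transient in $\mdp$ already or enters one of the fresh acyclic chains. Applying \Cref{thm:liminfunivtrans} to $\widetilde{\mdp}$ yields an $\eps$-optimal MD strategy, which pulls back to an MD strategy in $\mdp$ by substituting the choice "enter the spliced chain" with "play the corresponding MD stay-in-$A_\infty$-component strategy".

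The main obstacle will be verifying that every state in $R$ indeed admits a single MD stay-forever strategy and that these choices can be made uniformly, so the resulting pulled-back strategy is genuinely MD (positional) rather than requiring memory. Staying inside a fixed finite closed $A_\infty$-component is trivially MD; the finitely branching ingredient, via \Cref{lem:fbavoid}, ensures that values behave discretely, so whenever a state has a positive value for "stay forever in some $A_\infty$-component" this value is attained by a fixed positional choice at each state. Combining this with Ornstein's result \cite{Ornstein:AMS1969} (giving uniform $\eps$-optimal MD strategies for reachability to $R$) lets the spliced-in chain choice be realized by a single MD strategy in $\mdp$, completing the reduction.
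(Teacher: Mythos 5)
Your high-level architecture is the same as the paper's: isolate the states from which the controller can surely keep every transition inside $\bigcap_{i\in\N}A_i$ (your $R$, the paper's $S_{\text{safe}}$), graft on fresh infinite chains that are trivially winning, argue that in the modified MDP the objective coincides with $\transience\cap\bigcap_{i\in\N}\F\G\,A_i$, invoke \Cref{thm:liminfunivtrans}, and pull the resulting MD strategy back by substituting the MD safety strategy. The gap is precisely in the step you call ``crucial'' and justify pathwise. The assertion that every winning non-transient run eventually remains in a transition-closed $A_\infty$-component ``from which the controller can stay forever'' is false as a statement about individual runs, for two reasons. First, finite branching does \emph{not} make the set of states visited infinitely often finite: in a ladder $s\to u_1\to u_2\to\cdots$ with back-edges $u_i\to s$, a single run can visit every $u_i$ infinitely often. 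Second, and more fundamentally, the component traced by a run is only ``closed'' along the transitions the run happened to take; at a random state the run may simply have been lucky. Concretely, take a controlled state $s$ with $s\to v$ (reward $0$), and a random state $v$ going with probability $1/2$ to $s$ (reward $0$) and with probability $1/2$ along a reward $-1$ transition to a state $b$ with $b\to v$ (reward $0$). The run $(sv)^\omega$ is non-transient and winning for $\liminfppobj$, yet none of its states lies in $R$: under every strategy the $-1$ transition recurs almost surely. So winning non-transient runs avoiding $R$ do exist; what is true is only that they form a \emph{null set under every strategy}, and proving this is the technical heart of the paper's proof (\Cref{eqliminfpptransience}), which your proposal replaces by the invalid pathwise argument.

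The paper's proof of that null-set claim is genuinely probabilistic and uses finite branching twice more: for each reachable state $s$ outside the safe set, sure safety fails, hence (finitely branching safety) the value of forever avoiding violating transitions is $<1$; \Cref{lem:fbavoid} then yields a horizon $k_s$ and a $\delta_s>0$ chance of hitting a violating transition within $k_s$ steps under \emph{every} strategy; finiteness of the set of transitions reachable in $\le k_s$ steps plus monotonicity of the $A_i$ converts this into a single index $\ell_s$ with $\probm_{\mdp',s,\sigma'}\bigl(\F^{\le k_s}\,\overline{B_{\ell_s}}\bigr)\ge\delta_s$ for all $\sigma'$; finally, runs visiting $s$ infinitely often satisfy $\F\G\,B_{\ell_s}$ with probability at most $\lim_{i\to\infty}(1-\delta_s)^i=0$, and a countable union bound over $s$ finishes. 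Your proposal contains none of this; your appeal to \Cref{lem:fbavoid} concerns only the (standard and comparatively easy) fact that sure safety admits MD strategies. A secondary flaw: because you \emph{add} chains at states of $R$ instead of redirecting all incoming transitions to the safe set into the gadget (as the paper does), the literal statement ``$\varphi$ agrees with $\varphi\cap\transience$ up to a null set under every strategy'' is false in your $\widetilde{\mdp}$ even once the null-set lemma is available -- a strategy may loop forever in a finite $A_\infty$-cycle inside $R$, winning but not transient -- so you would have to retreat to equality of \emph{values}, which again reduces to exactly the missing lemma for runs avoiding both $R$ and the chains.
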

\begin{proof}[Proof outline]
The main idea is to do a case distinction between transient and non-transient runs.
Under non-transience, the objective $\bigcap_{i \in \N} \F \G A_i$
can only be satisfied if one eventually enters a certain totally safe subspace
where one can win almost surely with an MD strategy.
For the other case, under transience, we obtain an $\eps$-optimal MD strategy
from \Cref{thm:liminfunivtrans}.
The proof handles this case distinction via the construction of a
modified MDP $\mdp'$ that folds the first case into the second.
\end{proof}
\begin{proof}
Let $\eps >0$.
We begin by partitioning the state space into two sets, $S_{\text{safe}}$ and
$S \setminus S_{\text{safe}}$.
The set $S_{\text{safe}}$ is the subset of states which is surely winning for
the safety objective of only using transitions in $\bigcap_{i \in \N} A_i$.
Since $\mdp$ is finitely branching, there exists a uniformly optimal MD
strategy $\sigma_{\text{safe}}$ for this safety objective
\cite{Puterman:book,KMSW2017}.

We construct a new MDP $\mdp'$ by modifying $\mdp$. We create a gadget
$G_{\text{safe}}$ composed of a sequence of new controlled states
$x_0 , x_1 , x_2 , \dots$ with transitions 
$x_0 \transition x_1 ,\, x_1 \transition x_2 ,$ etc. 
Let $X \eqdef \bigcup_{i \in \N} \{ x_i \transition x_{i+1} \}$. 
We now define sets $B_i \eqdef A_i \cup X$. Hence any run entering $G_{\text{safe}}$ is winning for $\bigcap_{i \in \N} \F \G \, B_i$. 
We insert $G_{\text{safe}}$ into $\mdp$ by replacing all incoming transitions
to $S_{\text{safe}}$ with transitions that lead to $x_{0}$. The idea behind
this construction is that when playing in $\mdp$, once you reach a state in
$S_{\text{safe}}$, you can win surely by playing the
optimal MD strategy $\sigma_{\text{safe}}$ for safety.
So we replace $S_{\text{safe}}$ with the surely winning gadget $G_{\text{safe}}$.
Thus
\begin{equation}\label{eq:valuesmmprime}
\valueof{\mdp,\bigcap_{i \in \N} \F \G A_i}{s_0} = \valueof{\mdp',\bigcap_{i \in \N} \F \G \, B_i}{s_0} 
\end{equation}
and if an $\eps$-optimal MD strategy exists in $\mdp$,
then there exists a corresponding one in $\mdp'$, and vice-versa.

In the next step we argue that under \emph{every} strategy
$\sigma'$ from $s_0$ in $\mdp'$ the attainment for
$\bigcap_{i \in \N} \F \G \, B_i$ and
$\transience \cap \bigcap_{i \in \N} \F \G \, B_i$ coincide, i.e.,
\begin{claim}\label{eqliminfpptransience}
\[
\forall \sigma'.\, \probm_{\mdp',s_0,\sigma'}(\bigcap_{i \in \N} \F \G \, B_i)
= \probm_{\mdp',s_0,\sigma'}(\transience \cap \bigcap_{i \in \N} \F \G \, B_i).
\]
\end{claim}
\begin{proof}
The $\ge$ inequality holds trivially, since 
$\transience \cap \bigcap_{i \in \N} \F \G \, B_i \subseteq \bigcap_{i \in \N}\F \G \, B_i$.

Towards the $\le$ inequality, it suffices to show that
\begin{equation}\label{eq:liminf-part-transience}
\forall \sigma'.\ \probm_{\mdp',s_0,\sigma'}(\bigcap_{i \in \N} \F \G \, B_i \cap \overline{\transience})=0.
\end{equation}
Let $\sigma'$ be an arbitrary strategy from $s_0$ in $\mdp'$ and
$\playset$ be the set of all runs induced by it.
For every $s \in S$, let $\playset_{s} \defeq \{\rho \in \playset \mid \rho \text{ satisfies } \G \F (s) \}$
be the set of runs seeing state $s$ infinitely often.
In particular, any run $\rho \in \playset_{s}$ is not transient.
Indeed, $\overline{\transience} = \bigcup_{s \in S} \playset_{s}$.
We want to show that for every state $s \in S$ and strategy $\sigma'$
\begin{equation}\label{eq:liminf-part-transience-s}
\probm_{\mdp',s_0,\sigma'}(\bigcap_{i \in \N} \F \G \, B_i \cap \playset_{s}) = 0.
\end{equation}
Since all runs visiting a state in $G_{\text{safe}}$ are transient, any
$\playset_{s}$ with $s$ in $G_{\text{safe}}$ must be empty, and thus
\eqref{eq:liminf-part-transience-s} holds for these cases.

Now we consider the remaining cases of $\playset_{s}$ where $s$ is not in $G_{\text{safe}}$.
Let $T \eqdef \{t \in \transition_{\mdp'} \mid\  t \notin \bigcap_{i \in \N} B_i \}
= \bigcup_{i \in \N} \overline{B_i}$.

We now show that $\valueof{\mdp',\neg\F T}{s} < 1$
by assuming the opposite and deriving a contradiction.
Assume that $\valueof{\mdp',\neg\F T}{s} = 1$.
The objective $\neg\F T$ is a safety objective.
Thus, since $\mdp'$ is finitely branching, there exists a strategy
from $s$ that surely avoids $T$ (always pick an optimal
move) \cite{Puterman:book,KMSW2017}.
(This would not hold in infinitely branching MDPs where optimal moves might
not exist.)
However, by construction of $S_{\text{safe}}$ and $\mdp'$, this implies that 
$s$ is in $G_{\text{safe}}$. Contradiction.
Thus $\valueof{\mdp',\neg\F T}{s} < 1$.

Since $\mdp'$ is finitely branching, we can apply \Cref{lem:fbavoid}
and obtain that there exists a threshold $k_s$ such that
$\valueof{\mdp',\neg\F^{\le k_s} T}{s} < 1$.
Therefore
$\delta_s \eqdef 1 - \valueof{\mdp',\neg\F^{\le k_s} T}{s} >0$.
Thus, under every strategy, upon visiting $s$ there is a chance $\ge \delta_s$
of seeing a transition in $T = \bigcup_{i \in \N} \overline{B_i}$
within the next $\le k_s$ steps.
Let $T^s \subseteq T$ be the subset
of transitions in $T$ that can be reached
in $\le k_s$ steps from $s$. Since $\mdp'$ is finitely branching, $T^s$ is finite.
Since the sequence of sets $\{B_i\}_{i\in \N}$ is monotone decreasing, the sequence
$\{\overline{B_i}\}_{i\in \N}$ is monotone increasing.
Hence, for every transition $t \in T^s$, there is a minimal index $i$ such
that $t \in \overline{B_i}$.
Moreover, since $T^s$ is finite, the maximum (over $t \in T^s$) of these
minimal indices is bounded, i.e.,
$\ell_s \eqdef \max_{t \in T^s}\ \min\{ i \ \mid\ t \in T^s \text{ and } t \in \overline{B_i} \} < \infty$.

Thus, under \emph{every} strategy, upon visiting $s$ there is a chance $\ge \delta_s$
of seeing a transition in $\overline{B_{\ell_s}}$ within the next $\le k_s$
steps, i.e.,
\begin{equation}\label{eq:finpointpayoff-maxmin}
\forall \sigma'\ \probm_{\mdp',s,\sigma'}(\F^{\le k_s} \overline{B_{\ell_s}})
\ge \delta_s > 0
\end{equation}
Define $\playset_{s}^{i} \defeq \{ \rho \in \playset \mid \rho \text{ sees $s$
at least $i$ times} \}$, so we get $\playset_{s} = \bigcap_{i \in \N} \playset_{s}^{i}$.
We obtain
\begin{align*}
& \sup_{\sigma'}\probm_{\mdp',s_0,\sigma'}(\bigcap_{i \in \N} \F \G \, B_i \cap \playset_{s}) \\
& \le \sup_{\sigma'}\probm_{\mdp',s_0,\sigma'}(\F\G \, B_{\ell_s} \cap \playset_{s}) & \text{set inclusion}\\
&
= \sup_{\sigma'}\lim_{n \to \infty}\probm_{\mdp',s_0,\sigma'}(\F^{\le n}\G \, B_{\ell_s} \cap \playset_{s}) &  \text{continuity of measures}\\
& \le \sup_{\sigma''}\probm_{\mdp',s,\sigma''}(\G \, B_{\ell_s} \cap \playset_{s}) & \text{$s$ visited after $>n$ steps}\\
& = \sup_{\sigma''} \probm_{\mdp',s,\sigma''}(\G \, B_{\ell_s} \cap \bigcap_{i \in \N} \playset_{s}^{i}) & \text{def.\ of $\playset_{s}^{i}$} \\
& = \sup_{\sigma''} \lim_{i \to \infty}\probm_{\mdp',s,\sigma''}(\G \, B_{\ell_s} \cap \playset_{s}^{i})
&  \text{continuity of measures} \\
& \le \lim_{i \to \infty}(1-\delta_s)^i = 0 & \text{by def.\ of $\playset_{s}^{i}$ and \eqref{eq:finpointpayoff-maxmin}}
\end{align*}
and hence \eqref{eq:liminf-part-transience-s}.
From this we obtain
\begin{equation*}
\begin{split}
\probm_{\mdp',s_0,\sigma'}(\bigcap_{i \in \N} \F \G \, B_i \ \cap \ \overline{\transience}) & = 
\probm_{\mdp',s_0,\sigma'}(\bigcap_{i \in \N} \F \G \, B_i \ \cap \ \bigcup_{s \in S} \playset_{s}) \\ 
& \le 
\sum_{s \in S} \probm_{\mdp',s_0,\sigma'}(\bigcap_{i \in \N} \F \G \, B_i \ \cap \ \playset_{s})=0
\end{split}
\end{equation*}
and thus \eqref{eq:liminf-part-transience} and \Cref{eqliminfpptransience}.
\end{proof}

By \Cref{thm:liminfunivtrans}, there exists an $\eps$-optimal MD
strategy $\widehat{\sigma}$ from $s_0$ for $\transience \cap \bigcap_{i \in \N} \F \G B_i$ in $\mdp'$, i.e.,
\begin{equation}\label{eq:transience-eps}
\probm_{\mdp',s_0,\hat{\sigma}}(\transience \cap \bigcap_{i \in \N} \F \G B_i)
\ge \valueof{\mdp',\transience \cap \bigcap_{i \in \N} \F \G B_i}{s_0}
- \eps.
\end{equation}
We construct an MD strategy $\sigma^{*}$ in $\mdp$ which plays
like the MD strategy $\sigma_{\text{safe}}$ in $S_{\text{safe}}$ and plays
like the MD strategy $\widehat{\sigma}$ everywhere else.
\begin{align*}
\probm_{\mdp,\state_0,\zstrat^{*}}(\bigcap_{i \in \N} \F \G \, A_i) 
&=  \probm_{\mdp',\state_0,\hat{\zstrat}}(\bigcap_{i \in \N} \F \G \, B_i) &
\text{def.\ of $\mdp'$, $\zstrat^{*}$ and $\sigma_{\text{safe}}$}\\
& = \probm_{\mdp',\state_0,\hat{\zstrat}}(\transience \cap \bigcap_{i \in \N} \F \G \, B_i)  & \text{by \Cref{eqliminfpptransience}}\\
& \ge \valueof{\mdp',\transience \cap \bigcap_{i \in \N} \F \G \, B_i}{s_0} - \eps  & \text{by \eqref{eq:transience-eps}}\\
& = \valueof{\mdp',\bigcap_{i \in \N} \F \G \, B_i}{s_0} - \eps   & \text{by \Cref{eqliminfpptransience}}\\
& = \valueof{\mdp,\bigcap_{i \in \N} \F \G \, A_i}{s_0} - \eps & \text{by \eqref{eq:valuesmmprime}}
\end{align*}
Hence $\sigma^{*}$ is an $\eps$-optimal MD strategy for $\bigcap_{i \in \N} \F \G \, A_i$ from
$s_0$ in $\mdp$. 
\end{proof}


\begin{corollary}\label{finoptupperpp}
Given a finitely branching MDP $\mdp$ and a 
$\bigcap_{i \in \N} \F \G A_i$ objective, 
there exists a single MD strategy that is optimal from every state that has an
optimal strategy.
\end{corollary}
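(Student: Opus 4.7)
The plan is to reduce the corollary to the $\eps$-optimal MD result from \Cref{finpointpayoff} via the shift-invariance bridge \Cref{epsilontooptimal}. The finitely branching assumption propagates through from the hypothesis of \Cref{finpointpayoff}, and the objective $\bigcap_{i \in \N} \F \G A_i$ is trivially shift invariant in every MDP: whether a run eventually stays forever inside $A_i$ (for each $i$) is unaffected by removing or prepending any finite prefix, since "eventually" tolerates arbitrary finite shifts.

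First, I would verify shift invariance explicitly: for any run $\rho$ and any finite prefix $\rho'$, the run $\rho' \rho$ lies in $\F \G A_i$ iff $\rho$ does, and the intersection over $i \in \N$ preserves this. Next, \Cref{finpointpayoff} gives, for every finitely branching MDP, every start state $s$, and every $\eps > 0$, an $\eps$-optimal MD strategy from $s$ for $\bigcap_{i \in \N} \F \G A_i$. This is precisely the hypothesis of \Cref{epsilontooptimal}.

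Finally, I invoke \Cref{epsilontooptimal}.\ref{item:epstooptimal}, which, given the existence of $\eps$-optimal MD strategies from every state for a shift-invariant objective, yields a single MD strategy that is optimal from every state admitting an optimal strategy. This gives exactly the statement of the corollary.

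There is no real obstacle: the work lies in \Cref{finpointpayoff} (already proven) and \Cref{epsilontooptimal} (quoted from \cite{KMST:Transient-arxiv}). The only mild subtlety worth checking is that \Cref{epsilontooptimal} requires $\eps$-optimal MD strategies \emph{from every state}, not just from a fixed $s_0$; this is exactly what \Cref{finpointpayoff} delivers (it is stated for an arbitrary initial state, and finite branching is preserved in every sub-MDP rooted at any state).
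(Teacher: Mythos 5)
Your proposal is correct and is essentially identical to the paper's proof: both verify shift invariance of $\bigcap_{i \in \N} \F \G A_i$, use \Cref{finpointpayoff} to supply $\eps$-optimal MD strategies from every state, and conclude via \Cref{epsilontooptimal}.\ref{item:epstooptimal}. Your additional remark that the hypothesis of \Cref{epsilontooptimal} requires $\eps$-optimal MD strategies from \emph{every} state (delivered by \Cref{finpointpayoff} for arbitrary start states) is a sound check that the paper leaves implicit.
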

\begin{proof}
Since $\bigcap_{i \in \N} \F \G A_i$ is shift invariant, the result follows
from \Cref{finpointpayoff} and \Cref{epsilontooptimal}.
\end{proof}

\subsection{Lower bound}

\begin{figure}[tbp]
\begin{center}
\scalebox{1.2}{
\begin{tikzpicture}[>=latex',shorten >=1pt,node distance=1.9cm,on grid,auto,
roundnode/.style={circle, draw,minimum size=1.5mm},
squarenode/.style={rectangle, draw,minimum size=2mm},
diamonddnode/.style={diamond, draw,minimum size=2mm}]

\node [squarenode,initial,initial text={}] (s) at(0,0) [draw]{$s$};

\node[roundnode] (r1)  [below right=1.4cm and 1.5cm of s] {$r_1$};
\node[roundnode] (r3)  [draw=none,right=1.3 of r1] {$\cdots$};
\node[roundnode] (r4)  [right=1.3cm of r3] {$r_i$};
\node[roundnode] (r5)  [draw=none,right=1.3cm of r4] {$\cdots$};

\node [squarenode,double,inner sep = 4pt] (t)  [below=1.6cm of r3] {$ t $};

\draw [->] (s) -- ++(1.5,0) -- (r1);
\node[roundnode] (d)  [draw=none,right=2.8 of s] {$\cdots$};
\draw[-] (s) -- (d);
\draw [->] (d) -- ++(1.3,0) -- (r4);
\node[roundnode] (dd)  [draw=none,right=2.8 of d] {$\cdots$};
\draw[-] (d) -- (dd);

\path[->] (r1) edge node [midway,left] {$\frac{1}{2}$} node[near start, right] {$-1$} (t);
\path[->] (r4) edge node [midway,right=.2cm] {$\frac{1}{2^{i}}$} node[near start, left] {$-1$} (t);
\path[->] (r1) edge  node[pos=0.3,below] {$\frac{1}{2}$} (s);   
\path[->] (r4) edge [bend left=1] node [pos=0.2,above] {$1-\frac{1}{2^{i}}$} (s);  

\draw [->] (t) -- node[midway, above] {$+1$} ++(-2.8,0) -- (s);
\end{tikzpicture}
}
\caption{ 
We present an infinitely branching MDP adapted from \cite[Figure 3]{KMSW2017} and augmented with a reward structure.
All of the edges carry reward $0$ except the edges entering $t$ that carry reward $-1$ and the edge from $t$ to $s$ carries reward $+1$.
A strategy is therefore optimal for $\liminfppobj$ if and only if it satisfies $\cobuchi{t}$ almost surely. This requires infinite memory.
Note that in the context of \Cref{infepsupperpp}, this example only works because it is not universally transient.
}
\label{liminfinfbranchlower}
\end{center}
\end{figure}

We present the lower bound for the strategy complexity in terms of $\liminfppobj$, because reasoning about counterexamples with transition rewards is very natural.

\begin{proposition}\label{infbranchsteplower}
There exists an infinitely branching MDP $\mdp$ as in \Cref{liminfinfbranchlower} with initial state $s$ such that
\begin{itemize}
\item every FR strategy $\sigma$ is such that $\probm_{\mdp, s, \sigma} (\liminfppobj) = 0$
\item there exists a strategy $\sigma$ such that $\probm_{\mdp, s, \sigma} (\liminfppobj) = 1$.
\end{itemize}
Hence, optimal (and even almost-surely winning) strategies and $\eps$-optimal
strategies for $\liminfppobj$ require infinite memory.
\end{proposition}
\begin{proof}
  This follows directly from \cite[Theorem 4]{KMSW2017}, since
  in \Cref{liminfinfbranchlower},
  $\liminfppobj$ coincides with the co-B\"{u}chi objective to visit state $t$
  only finitely often.
\end{proof}

\section{Expected Payoff Objectives}\label{sec:expected}

We show how upper and lower bounds on the strategy complexity of
optimal strategies for $\limsupppexp$ and $\liminfppexp$ follow directly
from results on the $\limsupppobj$ and $\liminfppobj$ objectives, respectively. 
This allows us to solve two open problems from
\cite[p.43 and p.53]{Sudderth:2020}.

\begin{table}[tbp]
\begin{tabular}{|l||l|l|}
\hline
                     & Optimal $\limsupppexp$ & Optimal $\liminfppexp$ \\ \hline
Finitely Branching   & Rand(Positional) or Det(SC) \ref{thm:explimsupupper}   \ref{thm:explimsuplower}      & MD    \ref{thm:expliminffinbranch}                 \\ \hline
Infinitely Branching & Rand(Positional) or Det(SC) \ref{thm:explimsupupper} \ref{thm:explimsuplower}        & Det(SC) \ref{thm:expliminfinfbranch}  \ref{thm:expliminflower}             \\ \hline
\end{tabular}
\label{table:results}
\caption{Strategy complexity of optimal strategies for the expected $\limsup$
  and $\liminf$.}
\end{table}

\subsection{Optimal Strategies for the Expected \texorpdfstring{$\limsup$}{lim
  sup}}\label{sec:limsupexpected}

\begin{theorem}\label{thm:explimsupupper}
Let $\mdp$ be a countable MDP with initial state $\state_0$.
If an optimal strategy for $\limsupppexp$ exists, then there also exists an
optimal memoryless randomized (MR) strategy and an optimal deterministic
Markov (Det(SC)) strategy.
\end{theorem}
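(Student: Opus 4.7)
The plan is to reduce the problem to the corresponding threshold objective and then invoke the previously established strategy-complexity results for $\bigcap_{i \in \N} \G \F \, A_i$ (equivalently, $\limsupppobj$).

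First, I would apply \Cref{thm:explimsuptothreshold} to $\mdp$ with initial state $\state_0$. Since an optimal strategy for $\limsupppexp$ exists in $\mdp$, the theorem yields an MDP $\mdp^u$ (built from the value-preserving sub-MDP $\mdp^r$ by replacing the reward function $r$ by a new state-based reward function $u$ tailored to the sets $A_i \eqdef \{ s \in S \mid r(s) \geq \valueof{\mdp^r, \limsupppexp}{s} - 2^{-i} \}$) such that any strategy optimal for $\limsupppobj$ in $\mdp^u$ is optimal for $\limsupppexp$ in $\mdp$. Crucially, the reduction preserves strategies in both directions (they are literally the same functions on partial runs), and in particular it preserves memory structure and randomization.

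Second, I would apply \Cref{cor:GFmr} to $\mdp^u$ with the objective $\bigcap_{i \in \N} \G \F \, A_i$, which by \Cref{lem:limsupisnestedbuchi} is equivalent to $\limsupppobj$. Since some strategy is optimal for $\limsupppobj$ in $\mdp^u$ (namely the image of the optimal $\limsupppexp$ strategy under the reduction), the corollary produces a randomized positional strategy $\hat{\sigma}$ that is optimal from every state in $\mdp^u$ admitting an optimal strategy, and in particular from $\state_0$. Carrying $\hat{\sigma}$ back to $\mdp$ via the reduction yields an optimal MR strategy for $\limsupppexp$ from $\state_0$.

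Third, I would repeat the same argument using \Cref{cor:GFsc} in place of \Cref{cor:GFmr}, which provides an optimal deterministic Markov strategy for $\bigcap_{i \in \N} \G \F \, A_i$ in $\mdp^u$ (regardless of whether $\mdp^u$ is universally transient, by item 2 of that corollary). Carrying it back gives an optimal Det(SC) strategy for $\limsupppexp$ in $\mdp$. There is no real obstacle here beyond checking that the reduction in \Cref{thm:explimsuptothreshold} genuinely commutes with memory structure, i.e.\ that the strategy transferred between $\mdp$ and $\mdp^u$ uses exactly the same memory; this is immediate since $\mdp^u$ and $\mdp^r$ differ only in the reward function, and $\mdp^r$ is a sub-MDP of $\mdp$ on which all strategies (in particular the optimal one) behave identically.
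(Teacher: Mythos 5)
Your proposal is correct and follows essentially the same route as the paper, whose proof is precisely the combination of \Cref{thm:explimsuptothreshold} with \Cref{cor:GFmr} and \Cref{cor:GFsc}; your additional checks (that an optimal $\limsupppobj$ strategy exists in $\mdp^u$, and that the reduction preserves memory and randomization since $\mdp^u$ differs from the sub-MDP $\mdp^r$ only in its reward function) are exactly the implicit details the paper leaves to the reader.
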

\begin{proof}
This follows from \Cref{cor:GFsc,cor:GFmr} and \Cref{thm:explimsuptothreshold}.
\end{proof}

In \cite[p.53]{Sudderth:2020} Sudderth poses the question whether `the existence of an optimal $\limsup$ strategy at every 
state always implies that there exists an optimal, possibly randomized, stationary strategy'. 
With \Cref{thm:explimsupupper}, we answer this question in the affirmative.
It is worth noting that Dubins and Savage
\cite[Example 4, p.59]{DubbinsSavage:2014} present a counterexample in the framework of finitely
additive probability theory.
Indeed our proof of \Cref{thm:nestedbuchimr} requires countably additive
probability theory, since the memoryless randomized (MR) strategy is constructed as a countably infinite
combination of memoryless deterministic (MD) strategies.
The next result gives us a lower bound on the strategy complexity. 

\bigskip
\begin{proposition}\label{thm:explimsuplower}
There exists a finitely branching countable MDP $\mdp$ with rational
rewards as in \Cref{limsuplowerdetf}
with initial state $\state_0$ such that
\begin{enumerate}
\item
$\exists \hat{\zstrat}\, \expectval_{\mdp, s_0, \hat{\zstrat}}(\limsup_{PP}) = 0 = \valueof{\limsupppexp}{s_0}$, 
i.e., $\hat{\sigma}$ is optimal.
\item
For every Det(F) strategy $\zstrat$ we have 
$\expectval_{\mdp, \state_{0},\zstrat}(\limsup_{PP}) < 0 = \valueof{\limsupppexp}{s_0}.$
\end{enumerate}
So optimal strategies, when they exist, cannot be chosen Det(F).
\end{proposition}

\begin{proof}
This follows directly from \Cref{thm:limsuplowerdetf}.
\end{proof}

\subsection{Optimal Strategies for the Expected \texorpdfstring{$\liminf$}{lim
     inf}}

The following theorem solves the open question from
\cite[p.43, last par.]{Sudderth:2020} about the strategy complexity of optimal
strategies for the expected $\liminf$.

\medskip
\begin{theorem}\label{thm:expliminfinfbranch}
Let $\mdp$ be a countable (possibly infinitely branching) MDP.
Optimal strategies for $\liminfppexp$, when they exist, can be chosen as
  \begin{itemize}
  \item
    MD, if $\mdp$ is universally transient.
  \item
   Deterministic Markov (Det(SC)) in general.
\end{itemize}
\end{theorem}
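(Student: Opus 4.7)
The plan is to combine two existing results: the reduction from optimal expected payoff to optimal threshold payoff given by \Cref{thm:explimsuptothreshold}, and the strategy complexity bounds for $\bigcap_{i \in \N} \F \G \, A_i$ (equivalently $\liminfppobj$) established in \Cref{infoptupperpp}.

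First, assume an optimal strategy for $\liminfppexp$ exists in $\mdp$ from $\state_0$. Then \Cref{thm:explimsuptothreshold} produces an MDP $\mdp^u$ such that optimal strategies for $\liminfppobj$ in $\mdp^u$ are optimal strategies for $\liminfppexp$ in $\mdp$, using the same memory. By inspection of the construction in the proof of \Cref{thm:explimsuptothreshold}, $\mdp^u$ is obtained from a sub-MDP $\mdp^r$ of $\mdp$ (the one consisting of states and transitions used with positive probability by the assumed optimal strategy) by only replacing the reward function. Therefore, the transition graph of $\mdp^u$ is a subgraph of $\mdp$'s, and any run of $\mdp^u$ projects to a run of $\mdp$. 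In particular, if $\mdp$ is universally transient then so is $\mdp^u$.

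Next, apply \Cref{infoptupperpp} to $\mdp^u$ with the $\liminfppobj$ objective (viewed as a $\bigcap_{i\in\N} \F\G A_i$ objective via \Cref{lem:liminfisnestedcobuchi}). In the universally transient case, $\mdp^u$ inherits universal transience and we obtain an MD strategy that is optimal for $\liminfppobj$ from $\state_0$ in $\mdp^u$. This strategy (extended arbitrarily outside the state space of $\mdp^u$) is an MD optimal strategy for $\liminfppexp$ from $\state_0$ in $\mdp$. For the general case, \Cref{infoptupperpp} yields an optimal Det(SC) strategy for $\liminfppobj$ in $\mdp^u$, translating to a Det(SC) optimal strategy for $\liminfppexp$ in $\mdp$.

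The only point that really needs verification is the preservation of universal transience from $\mdp$ to $\mdp^u$, which is the main (if mild) obstacle. This is straightforward: since the state space and transition relation of $\mdp^u$ embed into those of $\mdp$, any run in $\mdp^u$ corresponds to a run in $\mdp$ on the same sequence of states, and transience is a property preserved under such an embedding. With this observation the proof is essentially a direct composition of the two cited results.
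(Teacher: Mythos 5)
Your proposal is correct and takes essentially the same route as the paper, whose proof is precisely the composition of \Cref{thm:explimsuptothreshold} with \Cref{infoptupperpp}. Your additional check that $\mdp^u$ inherits universal transience from $\mdp$ (because $\mdp^u$ differs from a sub-MDP of $\mdp$ only in its reward function) is a detail the paper leaves implicit, and your justification of it is sound.
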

\begin{proof}
This follows from \Cref{thm:explimsuptothreshold} and \Cref{infoptupperpp}.
\end{proof}

In the special case of \emph{finitely branching} MDPs,
optimal strategies can be simpler.

\bigskip
\begin{theorem}\label{thm:expliminffinbranch}
  Let $\mdp$ be a countable \emph{finitely branching} MDP.
  Optimal strategies for $\liminfppexp$, when they exist, can be chosen MD.
\end{theorem}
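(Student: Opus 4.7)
The plan is to combine the reduction from expected $\liminf$ to threshold $\liminf$ (\Cref{thm:explimsuptothreshold}) with the strong upper bound for finitely branching MDPs (\Cref{finoptupperpp}). The theorem is essentially the finitely-branching analogue of \Cref{thm:expliminfinfbranch}, and the proof structure mirrors that result, but with the sharper conclusion that no step counter is needed.

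First, I would assume that an optimal strategy for $\liminfppexp$ from the initial state $\state_0$ exists in $\mdp$, since otherwise there is nothing to show. I would then invoke \Cref{thm:explimsuptothreshold} in the $\liminfppexp$ version: this constructs an auxiliary MDP $\mdp^u$ (obtained as a sub-MDP $\mdp^r$ of $\mdp$, restricted to the states and transitions used by some optimal $\liminfppexp$ strategy, equipped with the modified state-based reward function $u$) such that any optimal strategy for $\liminfppobj$ in $\mdp^u$ is an optimal strategy for $\liminfppexp$ in $\mdp$. The crucial observation here, which is the only new ingredient over the proof of \Cref{thm:expliminfinfbranch}, is that $\mdp^u$ inherits finite branching from $\mdp$: it is a sub-MDP of $\mdp$ on the same state and transition structure, modulo a reward relabeling, so no branching degree is ever increased.

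Next, by \Cref{lem:liminfisnestedcobuchi}, the $\liminfppobj$ objective in $\mdp^u$ is expressible as a $\bigcap_{i \in \N} \F \G \, A_i$ objective with a monotone decreasing chain of transition sets $A_i$. Since $\mdp^u$ is finitely branching and $\state_0 \in \mdp^u$ admits an optimal strategy for this objective (namely any optimal $\liminfppexp$ strategy restricted appropriately), we can apply \Cref{finoptupperpp} to obtain a single MD strategy $\hat{\sigma}$ in $\mdp^u$ that is optimal from every state of $\mdp^u$ that has an optimal strategy; in particular, $\hat{\sigma}$ is optimal for $\liminfppobj$ from $\state_0$ in $\mdp^u$. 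By the reduction of \Cref{thm:explimsuptothreshold}, the very same MD strategy $\hat{\sigma}$ (extended arbitrarily outside the states of $\mdp^u$, say by playing any fixed MD action) is then optimal for $\liminfppexp$ from $\state_0$ in $\mdp$.

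There is no real obstacle here: everything reduces to stringing together two already-proved results. The only minor point to verify carefully is that finite branching is preserved by the $\mdp \mapsto \mdp^u$ construction of \Cref{thm:explimsuptothreshold}, and that extending an MD strategy defined on the sub-MDP $\mdp^u$ back to $\mdp$ does not affect its attainment on runs consistent with it, since all optimal play remains inside $\mdp^u$ by construction. Once that is noted, the proof is essentially a one-line composition.
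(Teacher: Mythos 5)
Your proposal is correct and follows essentially the same route as the paper, whose entire proof is the one-line composition of \Cref{thm:explimsuptothreshold} with \Cref{finoptupperpp}. The additional details you supply---that $\mdp^u$ inherits finite branching because it is a sub-MDP of $\mdp$ up to reward relabeling, and that the MD strategy transfers back to $\mdp$ unchanged---are exactly the points the paper leaves implicit.
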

\begin{proof}
This follows from \Cref{thm:explimsuptothreshold} and \Cref{finoptupperpp}.
\end{proof}

For infinitely branching MDPs, the following lower bound holds.

\smallskip
\begin{proposition}\label{thm:expliminflower}
There exists an infinitely branching MDP $\mdp$ as in \Cref{liminfinfbranchlower} with reward implicit in the state and initial state $s$ such that
\begin{itemize}
\item
  there is an optimal strategy $\sigma$ with $\expectval_{\mdp, s,
    \sigma} (\liminf_{PP}) = 1 = \valueof{\liminfppexp}{s}$.
\item
  every FR strategy $\sigma$ is such that $\expectval_{\mdp, s, \sigma} (\liminf_{PP}) = -1$
\end{itemize}
Hence, optimal strategies for $\liminfppexp$ require infinite memory.
\end{proposition}
\begin{proof}
This follows directly from \Cref{infbranchsteplower}.
\end{proof}

\subsection{Epsilon-optimal Strategies for the Expected
  \texorpdfstring{$\limsup$ and  $\liminf$}{lim sup and lim inf}}

The results above concern the strategy complexity of \emph{optimal} strategies
for the expected $\limsup$ (resp.\ $\liminf$), where they exist.
Optimal strategies need not always exist, but if they do, then their strategy
complexity might be lower than that of $\eps$-optimal strategies.

The strategy complexity of $\eps$-optimal strategies for $\limsupppexp$ and
$\liminfppexp$ in countable MDPs is an open question.
However, it is known for the special case of countable MDPs where all
daily rewards are either $0$ or $1$,
and the lower bounds for this special case 
trivially carry over to the general case.

In the special case with daily rewards either $0$ or $1$,
the $\limsupppexp$ objective corresponds to the B\"uchi objective (maximize
the probability of seeing transitions with reward $1$ infinitely often) and
$\liminfppexp$ corresponds to the co-B\"uchi objective (maximize
the probability of seeing transitions with reward $0$ only finitely often).
For the B\"uchi objective, $\eps$-optimal strategies can be chosen as 
Det(SC + 1-bit), while Markov strategies (Rand(SC)) or finite memory
strategies (Rand(F)) are not sufficient \cite{KMST:ICALP2019}.
For the co-B\"uchi objective, $\eps$-optimal strategies can be chosen as 
Det(SC), but not Rand(F), in general. However, if the MDP is finitely
branching, then $\eps$-optimal strategies for the co-B\"uchi objective can be chosen
as Det(Positional) \cite{KMSW2017,KMST2020c}.
These upper bounds for the co-B\"uchi objective do not carry over from MDPs to 2-player
stochastic games, where infinite memory (beyond a step counter)
is required instead \cite[Remark 1]{KMST-DGA:2024}.

\section{Conclusion}\label{sec:conclusion}

Our results provide a complete picture of the strategy complexity
of $\limsup$ and $\liminf$ threshold objectives, and the corresponding
problem for optimal strategies for the expected $\limsup$ and $\liminf$.
They also highlight fundamental differences between the $\limsup$ and
$\liminf$ objectives.
Unlike for the $\liminf$ case, 
\begin{itemize}
\item
The memory requirements of strategies for $\limsup$ objectives
depend on whether the transition rewards are integers
or rationals/reals (\Cref{rem:integer-rewards} and \Cref{tab:limsup}).
\item
Randomization does make a difference for $\limsup$ objectives, e.g., strategies can sometimes trade a step
counter for randomization (\Cref{cor:GFsc} and \Cref{cor:GFmr}).
\item
For $\limsup$ objectives, the memory requirements of $\eps$-optimal
strategies differ from those of optimal strategies
(\Cref{tab:limsup}).
\item
For $\limsup$ objectives, the memory requirements of strategies do \emph{not} depend on whether the MDP is infinitely
branching or finitely branching. Nor does it depend on a particular branching
degree $\ge 2$ (\Cref{lem:branchingreplacement}).
\end{itemize}
Finally, as shown in \Cref{sec:expected}, the strategy complexity may depend on
whether one works in the finitely additive probability theory or in the countably additive one.

\backmatter

\section*{Declarations}

\begin{itemize}
\item \textbf{Funding.}
  This work has been supported by
  the Royal Society, grant IES$\backslash$R3$\backslash$213110.
\item \textbf{Competing interests.}
The authors declare they have no financial, or non-financial interests, and have no 
potential conflicts of interest to declare.
\end{itemize}

\newpage
\appendix
\begin{appendices}
  \section{Memory-based strategies}\label{app-def}

A \emph{memory-based strategy} $\zstrat$ of Maximizer is a strategy that
can be described by a tuple $(\memconfset, \memconf_0, \zstrat_\alpha, \zstrat_\memconf)$
where $\memconfset$ is the set of memory modes, $\memconf_0 \in \memconfset$
is the initial memory mode, and
the functions $\zstrat_\alpha$ and $\zstrat_\memconf$ describe
how successor states are chosen (at controlled states)
and how memory modes are updated (generally), respectively.

A play $\play = \state_0e_0\state_1e_1\cdots$ according to $\zstrat$ 
generates a sequence of memory
modes $\memconf_0, \dots, \memconf_t, \memconf_{t+1}, \dots$ from the given set
of memory modes $\memconfset$, where $\memconf_t$ is the memory mode at
time $t$.

If the current state $\state_t$ is a controlled state then
the strategy $\zstrat$ selects a distribution over the available successor
states of $s_t$ via function $\zstrat_\alpha$ that
depends only on the current state $\state_t$ and the memory $\memconf_t$,
i.e., $\zstrat_\alpha(\state_t, \memconf_t) \in \dist(\successors{\state_t})$.
The next state $\state_{t+1}$ is then chosen according to this distribution.

If the current state $\state_t$ is a random state then
the successor state $\state_{t+1}$ is chosen according to the pre-defined
distribution over $\successors{\state_t}$ of the MDP. However, the strategy can still observe this and
update its memory.

In either case, the next memory mode $\memconf_{t+1}$ of Maximizer is chosen
from the distribution given by function
$\zstrat_m$, that depends on the current memory mode and
on the observed outcome of the step from $\state_t$
to $\state_{t+1}$, i.e.,  
$\zstrat_m(\memconf_t, \state_t, \state_{t+1}) \in \dist(\memconfset)$.

A \emph{finite-memory strategy} is one where $\card{\memconfset} < \infty$.
A \emph{$k$-mode strategy} is a memory-based strategy with at most $k$
memory modes, i.e., $\card{\memconfset} \le k$.
A $2$-mode strategy is also called a \emph{1-bit strategy}.
A strategy is \emph{memoryless} (also called positional or stationary)
if $\card{\memconfset}=1$.
A strategy is called \emph{Markov} if it uses only a step counter but no
additional memory, i.e., $\memconfset = \N_0$ and $\memconf_n = n$.
A strategy is \emph{deterministic} (also called pure) if the distributions
chosen by $\zstrat_\alpha$ and $\zstrat_m$ are Dirac.
Otherwise, it is called \emph{randomized} (aka mixed).
Pure stationary strategies are also called MD (memoryless deterministic)
and mixed stationary strategies are also called MR (memoryless randomized).
Similarly, deterministic (aka pure) finite-memory strategies are also called FD,
and randomized (aka mixed) finite-memory strategies are also called FR.

  \section{Transition Rewards vs. State Rewards}\label{app:states-vs-transitions}

Transition based rewards and state based
rewards can be encoded into each other.

\begin{definition}
Given an MDP $\mdp = (\states,\zstates,\rstates,\transition,\probp,r)$ with state based rewards bounded between $+m$ and $-m$, $m \in \R$, we construct a modified MDP with transition based rewards $\mdp^t = (\states^t,\zstates^t,\rstates^t,\transition^t,\probp^t,r^t)$ as follows.
For every $s \in S$, we construct two new states $s_{\text{in}}$ and $s_{\text{out}}$ and a transition $t^s$ with reward $r(t^s) = r(s)$. I.e.\ we replace every instance of

\begin{center}
    \begin{tikzpicture}
    
    \node[draw, circle, minimum height=5mm] (S1) at (0,0){};
    
    \node (T1) at (2,0){with};
    \node (T2) at (0,0.5){$+a$};
    
    \node[draw, circle, minimum height=5mm] (S2) at (4,0){};
    \node[draw, circle, minimum height=5mm] (S3) at (6,0){};

     \draw[->,>=latex] (S2) edge node[above, midway]{$+a$}  (S3);

    \end{tikzpicture}
\end{center}    

Formally, we construct two copies of $S$, $S_{\text{in}} \eqdef \{ s_{\text{in}} \mid s \in S \}$ 
and $S_{\text{out}} \eqdef \{ s_{\text{out}} \mid s \in S \}$. 
Similarly, define  $S_{\Box \text{in}} \eqdef \{ s_{\text{in}} \mid s \in \zstates \} $, 
$S_{\Box \text{out}} \eqdef \{ s_{\text{out}} \mid s \in \zstates \}$ and 
$S_{\ocircle \text{in}} \eqdef S_{\text{in}} \setminus S_{\Box \text{in}}$,
$S_{\ocircle \text{out}} \eqdef S_{\text{out}} \setminus S_{\Box \text{out}}$.
This allows us to define: 
\begin{itemize}
\item $S^t \eqdef S_{\text{in}} \cup S_{\text{out}}, 
\zstates^t \eqdef S_{\Box \text{in}} \cup S_{\Box \text{out}}, 
\rstates^t \eqdef S_{\ocircle \text{in}}\cup S_{\ocircle \text{out}}$, 
\item $\transition^t \eqdef \{ s_{\text{in}} \transition^t s_{\text{out}} \mid s\in S \} \cup 
\{ s_{\text{out}} \transition^t s^{'}_{\text{in}} \mid s \transition s' \}$
\item \[r^t(s \transition^t s') \eqdef 
	\begin{cases*}
     -m \text{ (resp. } +m) \text{ if } s \in S_{\text{out}}  \\
     r(s)  \text{ if } s \in S_{\text{in}}
    \end{cases*} \]
\item For $s \in \rstates$ and $s' \in S$, define $P^t(s_{\text{in}})(s'_{\text{out}}) \eqdef 1$ and 
$P^t(s_{\text{out}})(s'_{\text{in}}) \eqdef P(s)(s')$. 
\end{itemize}

Note that the definitions of $r^t$ and $P^t$ are complete,
since there are no transitions from $S_{\text{in}}$ to itself or from $S_{\text{out}}$ to itself.
\end{definition}

\smallskip
\begin{lemma}
Given an MDP $\mdp$ with state based rewards bounded between $+m$ and $-m$, $m \in \R$,
for every optimal $\limsupppobj$ 
(resp. $\liminfppobj$) 
strategy $\sigma$ in $\mdp^t$, 
there exists an optimal $\limsupppobj$ 
(resp. $\liminfppobj$) 
strategy $\sigma'$ in $\mdp$ with the same memory as $\sigma$ such that 
$\probm_{\mdp^t, s_{0_{\text{in}}}, \sigma}(\limsupppobj) = \probm_{\mdp, s_0, \sigma'}(\limsupppobj)$
(resp.\ $\probm_{\mdp^t, s_{0_{\text{in}}}, \sigma}(\liminfppobj) = \probm_{\mdp, s_0, \sigma'}(\liminfppobj)$).
\end{lemma}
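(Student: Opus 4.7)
The proof plan is to exploit the natural bijection between runs of $\mdp$ and runs of $\mdp^t$ starting at $s_{0_{\text{in}}}$. Every run $\rho = s_0 s_1 s_2 \ldots$ in $\mdp$ corresponds to a unique run $\rho^t = s_{0,\text{in}} s_{0,\text{out}} s_{1,\text{in}} s_{1,\text{out}} \ldots$ in $\mdp^t$, because the transitions $s_{\text{in}} \to s_{\text{out}}$ are forced and the transitions $s_{\text{out}} \to s'_{\text{in}}$ mirror the transitions $s \to s'$ of $\mdp$. I would define $\sigma'$ from $\sigma$ via this bijection: given a partial run $h = s_0 \ldots s_n$ in $\mdp$ ending at a controlled state $s = s_n$, let $h^t = s_{0,\text{in}} s_{0,\text{out}} \ldots s_{n,\text{in}} s_{n,\text{out}}$ and set $\sigma'(h)$ to be the pushforward of $\sigma(h^t)$ under the map $s'_{\text{in}} \mapsto s'$. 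Since the bijection $h \mapsto h^t$ is effective with no additional bookkeeping, $\sigma'$ uses the same memory structure as $\sigma$.

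Next I would verify that the induced probability measures agree on corresponding cylinders. A single step from $s$ to $s'$ in $\mdp$ under $\sigma'$ has the same probability as the pair of steps $s_{\text{in}} \to s_{\text{out}} \to s'_{\text{in}}$ in $\mdp^t$ under $\sigma$: the forced step contributes a factor $1$ by definition of $\probp^t$, while the branching step matches by construction of $\sigma'$ (for controlled $s$) or by $\probp^t(s_{\text{out}})(s'_{\text{in}}) = \probp(s)(s')$ (for random $s$). Carath\'eodory's theorem then lifts this agreement from cylinders to all Borel events.

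The key reward observation is that the interleaved transition reward sequence along $\rho^t$ encodes the state reward sequence along $\rho$. For $\limsupppobj$ the sequence along $\rho^t$ is $r(s_0), -m, r(s_1), -m, \ldots$, and since $r(s_n) \ge -m$, its $\limsup$ equals $\limsup_n r(s_n)$; symmetrically, for $\liminfppobj$ the intervening value is $+m$ and $r(s_n) \le m$, so the $\liminf$ of the interleaved sequence equals $\liminf_n r(s_n)$. Thus $\rho^t$ satisfies the threshold objective in $\mdp^t$ iff the state-reward sequence along $\rho$ satisfies the corresponding objective in $\mdp$, and the claimed probability equality follows from the previous paragraph.

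Finally, I would deduce optimality of $\sigma'$ from value equality. The same construction applied in reverse lifts every strategy in $\mdp$ to a strategy in $\mdp^t$ attaining the same objective probability, so the values at $s_0$ in $\mdp$ and at $s_{0_{\text{in}}}$ in $\mdp^t$ coincide. Combined with the probability preservation, this gives optimality of $\sigma'$. The one delicate point in the whole argument is the reward bookkeeping of the third paragraph: one must invoke the bounds $\pm m$ to ensure the inserted rewards do not disturb the $\limsup$ or $\liminf$. Everything else is routine bookkeeping on cylinder sets and the definitions of $\transition^t$ and $\probp^t$.
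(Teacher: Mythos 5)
Your proposal is correct and follows essentially the same route as the paper's proof: copy $\sigma$'s decisions at the \emph{out}-states via the length-doubling run bijection, observe that the interleaved buffer rewards $\mp m$ cannot disturb the $\limsup$ (resp.\ $\liminf$) because the state rewards are bounded by $\pm m$, and conclude that the two strategies have equal attainment. You are somewhat more explicit than the paper about the cylinder-level measure agreement and about transferring optimality via value equality, while the paper is more explicit about the one point you compress into ``no additional bookkeeping'' --- namely that simulating $\sigma$ when it uses a step counter requires $\sigma'$'s counter to count twice as fast (since path lengths in $\mdp^t$ are doubled), which is still a strategy of the same memory type.
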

\begin{proof}
Consider an optimal strategy $\sigma$ for $\limsupppobj$ (resp. $\liminfppobj$) in $\mdp^t$. 
We will use $\sigma$ to construct a new strategy $\sigma'$ which is optimal $\limsupppobj$ (resp. $\liminfppobj$) in $\mdp$.
For $s,s' \in S$, let $\sigma'(s)(s') \eqdef \sigma(s_{\text{out}})(s'_{\text{in}})$. 
Note that $\sigma'$ ignores all of the actions that $\sigma$ takes from an \textit{in} state to an \textit{out} state.
This is because by construction of $\mdp^t$ there are no decisions to be made in those cases.
In the case where $\sigma$ is Det(F) or Rand(F), $\sigma'$ makes all of the same decisions as $\sigma$, and thus the probability of a given sequence of rewards is the same between the two strategies (modulo the buffer rewards $\pm m$ in $\mdp^t$). 
Thus the attainment of $\sigma$ and $\sigma'$ must be the same.
In the case where $\sigma$ uses a step counter in its memory, notice that path lengths in $\mdp^t$ are doubled relative to path lengths in $\mdp$.
However, since there is no decision to be made in alternating states, we adjust for this by making $\sigma'$'s step counter count twice as fast so that all decisions are made at even step counter values, mirroring the step counter $\sigma$ uses. Hence $\sigma$ and $\sigma'$ must have the same attainment.
\end{proof}

\begin{remark}
The reverse construction also clearly works. Given an MDP $\mdp$ with transition based rewards, 
there exists an MDP $\mdp^s$ with state based rewards
such that for every optimal $\limsupppobj$ (resp. $\liminfppobj$) strategy $\sigma$ in $\mdp^s$, 
there exists an optimal $\limsupppobj$ (resp. $\liminfppobj$) strategy $\sigma'$ in $\mdp.$

The proof follows from a very similar construction which replaces all instances of
\begin{center}
    \begin{tikzpicture}
    
    \node[draw, circle, minimum height=5mm] (S1) at (0,0){};
    \node[draw, circle, minimum height=5mm] (S2) at (2,0){};
    
    \node (T1) at (3,0){with};
    \node (T2) at (6,0.5){$+a$};
    \node (T3) at (4,0.5){$\pm m$};
    \node (T4) at (8,0.5){$\pm m$};

    \node[draw, circle, minimum height=5mm] (S3) at (4,0){};
    \node[draw, circle, minimum height=5mm] (S4) at (6,0){};
    \node[draw, circle, minimum height=5mm] (S5) at (8,0){};
      
     \draw[->,>=latex] (S1) edge node[above, midway]{$+a$}  (S2)
     (S3) edge  (S4)
     (S4) edge  (S5);

    \end{tikzpicture}
\end{center}    
where the $\pm m$ means $-m$ for $\limsupppobj$ and $+m$ for $\liminfppobj$.
\end{remark}

\end{appendices}  

\newpage
\bibliography{conferences,refs}

\end{document}